\tikzstyle{arrow} = [thick,->]
\titleformat{\section}[hang]{\normalfont\Large\bfseries}{\thesection}{1em}{}
\titleformat{\subsection}[hang]{\normalfont\large}{\thesubsection}{1em}{}
\titleformat{\subsubsection}[hang]{\normalfont}{\thesubsubsection\fontsize{12pt}{12pt}}{1em}{}
\definecolor{DarkGreen}{RGB}{0,150,0}
\definecolor{DeepPink}{RGB}{255,20,147} 
\definecolor{DarkRed}{RGB}{150,05,05}
\definecolor{VioletBlue}{RGB}{90,41,222}
\def\@footnotecolor{red}
\def\@footnotemark{%
    \leavevmode
    \ifhmode\edef\@x@sf{\the\spacefactor}\nobreak\fi
    \stepcounter{Hfootnote}%
    \global\let\Hy@saved@currentHref\@currentHref
    \hyper@makecurrent{Hfootnote}%
    \global\let\Hy@footnote@currentHref\@currentHref
    \global\let\@currentHref\Hy@saved@currentHref
    \hyper@linkstart{footnote}{\Hy@footnote@currentHref}%
    \@makefnmark
    \hyper@linkend
    \ifhmode\spacefactor\@x@sf\fi
    \relax
  }%
\theoremstyle{plain} 
\newtheorem*{corr*}{Corollary} 
\newtheorem{tthm}{Theorem}
\newtheorem{tthmm}{Theorem}
\theoremstyle{definition} 
\newtheorem{thm}{Theorem}[section]
\newtheorem{prop}[thm]{Proposition} 
\newtheorem*{ack}{Acknowledgments}
\newtheorem*{out}{Outline of paper}
\newtheorem*{notn}{Notation}
\newtheorem*{conv}{Convention Choice}
\newtheorem{dfn}[thm]{Definition} 
\newtheorem{cor}[thm]{Corollary}  
\newtheorem{ex}[thm]{Example}
\newtheorem{rem}[thm]{Remark}
\newtheorem{lem}[thm]{Lemma}
\newcommand{\an}{ \text{an} } 
\newcommand{\ev}{\text{ev}}
\newcommand{\ps}{k\{\!\{u\}\!\}}
\newcommand{\gnor}{|\!|\!\cdot \!|\!|}
\newcommand{\abs}[1]{\left| \! \left| {#1} \right| \! \right|}
\def\semicolon{;}
\def\applytolist#1{
    \expandafter\def\csname multi#1\endcsname##1{
        \def\multiack{##1}\ifx\multiack\semicolon
            \def\next{\relax}
        \else
            \csname #1\endcsname{##1}
            \def\next{\csname multi#1\endcsname}
        \fi
        \next}
    \csname multi#1\endcsname}
\def\calc#1{\expandafter\def\csname b#1\endcsname{{\mathbb #1}}}
\def\calc#1{\expandafter\def\csname bf#1\endcsname{{\mathbf #1}}}
\def\calc#1{\expandafter\def\csname c#1\endcsname{{\mathcal #1}}}
\def\calc#1{\expandafter\def\csname s#1\endcsname{{\mathscr #1}}}
\def\calc#1{\expandafter\def\csname f#1\endcsname{{\mathfrak #1}}}
\def\calc#1{\expandafter\def\csname tb#1\endcsname{{\text{\textbf{#1}}}}}
\DeclareMathOperator{\spec}{Spec}
\DeclareMathOperator{\spn}{span}
\DeclareMathOperator{\trop}{Trop}
\DeclareMathOperator{\trp}{trop}
\title{Spherical tropicalization and Berkovich analytification}
\author{Desmond Coles}
\date{ \small \textbf{The University of Texas at Austin \\
Austin, TX, United States of America \\
dcoles@utexas.edu
}
}
\begin{document}
\maketitle
\noindent
   \textbf{Abstract.} Let $X$ be a spherical variety. We show that Tevelev and Vogiannou's tropicalization map from $X$ to its tropicalization factors through the Berkovich analytification $X^{\an}$, as in the case for toric varieties. Furthermore we show that the tropicalization is a strong deformation retraction of $X^{\an}$. We also give a strong deformation retraction of Thuillier's analytification $X^{\beth}$ onto a subspace described using the colored fan of $X$.
   \vspace{\baselineskip}

\noindent
\textbf{Keywords.} Spherical variety. Tropicalization. Berkovich analytification. Algebraic Geometry.
\vspace{\baselineskip}

\noindent
{\textbf{Statements and Declarations.}} The author has no competing interests.
\vspace{\baselineskip}

\noindent
{\textbf{Acknowledgements.}} The author would like to thank their advisor, Sam Payne, for their support and guidance during this research. The paper also benefited greatly from conversations with Tom Gannon, Sam Raskin, Yixian Wu, Hernán Iriarte, Amy Li, and Logan White. The author was grateful to be funded by NSF DMS–2001502 and NSF DMS–2053261.


\pagebreak

\section{Introduction}

Tropical geometry provides a set of tools for assigning combinatorial objects to algebraic varieties. One then studies the algebraic geometry of the original variety by looking at the combinatorics. One well known example is given by embedding a variety into a toric variety and then tropicalizing the toric variety \cite{Analytification}. In \cite{TV} Tevelev and Vogiannou introduce tropicalization for spherical homogenous spaces. Let $k$ be an algebraically closed field. Recall that if $G$ is a reductive group over $k$, and $X$ is a normal $G$-variety, then $X$ is \textbf{spherical} if there is a Borel subgroup of $G$ with an open orbit in $X$. Spherical varieties can be considered the nonabelian analogue of toric varities. Let  $\overline{K}=\ps$ be Puiseux series, the algebraic closure of Laurent series $K=k(\!(u)\!)$. Equip $\overline{K}$ with the $u$-adic valuation $\gnor_u$.  Given a spherical homogeneous space $G/H$, Tevelev and Vogiannou define a set theoretic tropicalization map $\trp_G\colon G/H(\overline{K})\rightarrow \cV$, where $\cV$ is the set of $G(k)$-invariant valuations on the function field $k(G/H)$. The set $\cV$ in fact has the structure of a finitely generated convex cone \cite[Corollary 5.3]{Knop}. The map $\trp_G$ takes $x\in G/H(\overline{K})$ to the valuation $\gnor_{x}\in\cV$ defined by the formula:
\[
    \abs{f}_x=\abs{gf(x)}_u \quad \text{$g$ in general position}
\]
where general position means $g\in U_f$, $U_f\subseteq G(k)$ a nonempty Zariski open such that $\abs{gf(x)}_u$ is constant for $g\in U_f$. For a subvariety $W\subseteq G/H$ one can then define $\trop_G(W)$ as the closure of $\trp_G(W(\overline{K}))$. This construction agrees with the usual tropicalization map in the case when $G=T$, $T$ a torus, and $H$ trivial. 

In \cite{Nash1} Tevelev and Vogiannou's construction was extended to all spherical varieties, generalizing the extension of tropicalization of tori to toric varities given in \cite{Analytification}. Here the target of the tropicalization map, $\trop_G(X)$, consists of a gluing of the valuation cones associated to $G$-orbits in $X$. The tropicalization map is defined as a disjoint union of the maps given above. In \cite{KM} Kaveh and Manon develop tropical geometry for spherical varieties via Gr\"{o}bner theory and it is shown that this agrees with the above notion.

Let $X^{\an}$ be the Berkovich analytification of $X$. In the case when $X$ is a toric variety with dense torus $T$, $\trop_T(X)$ is canonically a subspace of $X^{\an}$ and the tropicalization map factors through a continuous retraction $X^{\an}\rightarrow \trop_T(X)$. It was conjectured in \cite{TV} that this is true for tropicalizations of spherical homogeneous spaces. In \cite{KM} this is proved for affine spherical varieties. We show this is the case for all spherical varieties. Let $G^{\beth}$ denote the $k$-analytic space associated to $G$ as defined in \cite[Section 1]{Thuillier}. Let $\bfp\colon X^{\an}\rightarrow X^{\an}$ be the map given by $\ast$-multiplication by the unique point in the Shilov boundary of $G^{\beth}$.

\begin{tthm}\label{IntroAthm}
Let $X$ be a spherical $G$-variety over an algebraically closed and trivially valued field $k$, and let $\overline{K}$ denote Puiseux series in $k$. Then there is a canonical homeomorphism $\iota:\trop_G(X)\rightarrow \bfp(X^{\an})$, and $\bfp\colon X^{\an}\rightarrow \trop_G(X)$ is a retraction of topological spaces such that the following diagram commutes:
\[
\begin{tikzcd}
X(\overline{K}) \arrow[r] \arrow[dr, "\trp_G"'] & X^{\text{an}} \arrow[d, "\bfp"]  \\
 \, &  \trop_G(X).
 \end{tikzcd}
\]
\end{tthm}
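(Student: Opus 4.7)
The plan is to exploit the fact that, since $G$ is reductive, $G^{\beth}$ has a unique Shilov boundary point $\eta_G$, characterized as the point fixed under left translation by all of $G(k)$. Consequently, for any $x \in X^{\an}$ and any $g \in G(k)$ one has $g \cdot \bfp(x) = (g \cdot \eta_G) \ast x = \eta_G \ast x = \bfp(x)$, so $\bfp$ takes values in the $G(k)$-invariant locus of $X^{\an}$. Under the identification of $G(k)$-invariant real valuations on $k(G/H)$ with points of $X^{\an}$, this places $\bfp(x)$ inside Knop's valuation cone $\cV$ attached to the orbit through $x$, and hence in $\trop_G(X)$. The map $\iota$ will be defined by taking a $G$-invariant valuation $v \in \trop_G(X)$ to itself viewed inside $X^{\an}$.

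Next I would verify that the diagram commutes. For $x \in X(\overline{K})$ regarded as a point of $X^{\an}$, unwinding the $\ast$-action yields that $|\bfp(x)|$ evaluated on $f \in k(X)$ equals $|\eta_G \ast x|(f)$, which by the defining property of the Shilov point computes the generic value of $|gf(x)|_u$ as $g$ ranges over $G^{\beth}$, i.e.\ the constant value taken on a dense Zariski open $U_f \subseteq G(k)$. This is precisely Tevelev and Vogiannou's formula for $\trp_G(x)$, so $\bfp$ restricted to $X(\overline{K})$ coincides with $\trp_G$ after postcomposition with $\iota$. Continuity of $\bfp$ is inherited from continuity of the $\ast$-action $G^{\beth} \times X^{\an} \to X^{\an}$, and the retraction property $\bfp \circ \iota = \operatorname{id}$ on $\trop_G(X)$ is immediate from $\eta_G \ast v = v$ whenever $v$ is already fixed by the $G(k)$-action.

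To promote these pointwise statements to a homeomorphism $\iota \colon \trop_G(X) \to \bfp(X^{\an})$, I would proceed orbit by orbit. Stratifying $X$ into its finitely many $G$-orbits $G/H_i$, the restriction of $\bfp$ to $(G/H_i)^{\an}$ has image the valuation cone $\cV_i$: surjectivity is the retraction identity on $\cV_i$, while injectivity uses that points of $\cV_i$ are determined by their values on a finite collection of $B$-semiinvariant rational functions. To assemble these fibrewise identifications, I would check that the preimage under $\bfp$ of the cone $\cV_i$ is precisely the union of $(G/H_j)^{\an}$ for orbits $G/H_j$ degenerating into $G/H_i$, matching the closure relations used in \cite{Nash1} to glue $\trop_G(X)$ from the $\cV_i$ according to the colored fan of $X$.

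The hard part will be this final globalization step: verifying that $\bfp$ interacts correctly with orbit boundaries so that the gluing of the $\cV_i$ prescribed by the colored fan matches the subspace topology induced from $X^{\an}$. One must track how the $\eta_G$-averaging of a point of a higher-dimensional orbit degenerates toward the boundary, and show that this degeneration realizes exactly the face identifications encoded in the colored fan. A secondary subtlety is extending the agreement $\bfp|_{X(\overline{K})} = \iota \circ \trp_G$ to all of $X^{\an}$, which will use density of $X(\overline{K})$ in $X^{\an}$ together with the continuity already established.
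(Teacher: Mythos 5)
Your set-theoretic outline matches the paper's: the image of $\bfp$ is the set of $G(k)$-fixed points of $X^{\an}$, these are the pairs $(\eta_Z,\gnor)$ with $Z$ a $G$-orbit and $\gnor$ a $G(k)$-invariant valuation, and the diagram commutes because $\bfg\ast p$ recovers the Tevelev--Vogiannou valuation. But two of the steps you treat as immediate are genuinely nontrivial. First, the identity $\abs{f}_{\bfg\ast x}=\abs{gf(x)}_u$ for $g$ generic is not a formal consequence of "the defining property of the Shilov point": the paper proves it (Lemma \ref{EqualityofmyvalandTheirval Lem}) by bounding the infimum defining $\gnor_{\bfg,p}$ from below, which requires producing an expression $m_x^{\ast}f=\sum_i g_i\otimes f_i(x)$ realizing the maximum; this uses $\alpha$-Cartesian bases from \cite{BGR}. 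Second, the claim that $\bfg\ast p=p$ for every $G(k)$-fixed $p$ (needed both for surjectivity of $\bfp$ onto the invariant locus and for your retraction identity $\bfp\circ\iota=\mathrm{id}$) is Proposition \ref{infinite Prop}, whose proof requires showing the Shilov point of $G^{\beth}\times\cM(\cH(p))$ lies in the closure of $G^{\beth}(k)\times\cM(\cH(p))$ — a density argument via the reduction map, not a one-line observation.

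The more serious gap is the one you yourself flag as "the hard part": you never actually prove that $\iota$ is a homeomorphism, i.e.\ that the glued topology on $\trop_G(X)$ agrees with the subspace topology from $X^{\an}$. Your proposed route of tracking degenerations toward orbit boundaries is not how the paper proceeds, and it is unclear it can be carried out directly. The paper's argument is: for $X$ simple with closed orbit $Y$, the $B$-stable affine open $X_0$ of Theorem \ref{fundopen Thm} meets every $G$-orbit, so $\iota(\trop_G(X))\subseteq X_0^{\an}$; the topology on $\trop_G(X)$ is generated by evaluation at the $B$-semiinvariant functions in $k[X_0]$, which are characterized by Theorem \ref{coloredconevals Thm}(1) as those $f$ with $\chi_f\in\sigma_Y(X)^{\vee}$ — this shows $\iota$ is an open bijection onto its image. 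To upgrade an open continuous bijection to a homeomorphism one then needs compactness: for proper $X$ the image is closed in the compact space $X^{\an}$, and the general case reduces to this via Sumihiro's equivariant completion. Without some substitute for these two ingredients (the affine open meeting all orbits, and the completion argument) your proof is incomplete precisely where you say it is.
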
 

\begin{corr*}
For $W$ a subvariety of $X$, there is a strong deformation retraction $H:[0,1]\times W^{\an}\rightarrow \trop_G(W)$ such that $H(1,p)=\bfp(p)$.
\end{corr*}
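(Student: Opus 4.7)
The plan is to realize $\bfp$ as the endpoint of a canonical homotopy built from Thuillier's contraction of $G^{\beth}$ onto its Shilov boundary point $\eta_G$. First I would extract from Thuillier's strong deformation retraction of $G^{\beth}$ onto $\{\eta_G\}$ a continuous path $\gamma : [0,1] \to G^{\beth}$ with $\gamma(0) = e$ and $\gamma(1) = \eta_G$, by evaluating the retraction at the identity element.

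Next I would define $H : [0,1] \times W^{\an} \to X^{\an}$ by the formula $H(t,p) = \gamma(t) \ast p$, using the action of $G^{\beth}$ on $X^{\an}$ that extends the algebraic $G$-action on $X$. Continuity of $H$ is immediate from continuity of $\ast$ and $\gamma$, and the endpoints are $H(0,p) = e \ast p = p$ and $H(1,p) = \eta_G \ast p = \bfp(p)$. By Theorem A, the density of $W(\overline{K})$ in $W^{\an}$, continuity of $\bfp$, and the defining equality $\trop_G(W) = \overline{\trp_G(W(\overline{K}))}$ together imply that $\bfp(W^{\an})$ lies in $\iota(\trop_G(W))$.

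To confirm the strong deformation retraction property, I would check that each $q \in \iota(\trop_G(W))$ is fixed throughout the homotopy. Such a $q$ corresponds by Theorem A to a $G(k)$-invariant multiplicative seminorm. Computing $(\gamma(t) \ast q)(f)$ as $(\gamma(t) \otimes q)(\mu^* f)$, where $\mu : G \times X \to X$ is the action, the $G$-invariance of $q$ forces the function $g \mapsto q(\mu^* f(g, \cdot))$ to be constant with value $q(f)$. Then $\gamma(t)$ evaluates this constant to $q(f)$ itself, so $\gamma(t) \ast q = q$ for every $t \in [0,1]$, as required.

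The main obstacle I anticipate is one of interpretation rather than substance: the embedded tropicalization $\iota(\trop_G(W))$ need not be contained in $W^{\an}$ when $W$ fails to be $G$-stable (already visible for $W = V(x-1) \subset \bA^1$ under the $\bG_m$-action, where $\iota(\trop_G(W))$ is the Gauss point of $X^{\an}$, which lies outside $W^{\an}$), so $H$ should be viewed as a homotopy inside the ambient $X^{\an}$. Granting this interpretation, the remaining content is precisely the fixed-point calculation above, which invokes only the $G(k)$-invariance package supplied by Theorem A together with bilinearity of the tensor seminorm.
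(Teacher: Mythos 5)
Your overall strategy is the one the paper uses (move $p$ along a path of points of $G^{\beth}$ from $e$ to the Shilov point $\bfg$ via $\ast$-multiplication, and use Theorem A to identify the endpoint with $\trop_G(W)$), and your remark that the homotopy must be read as taking place in the ambient $X^{\an}$ rather than in $W^{\an}$ is a fair and necessary reading of the statement. However, there are two genuine gaps. First, the path $\gamma$ you want to "extract from Thuillier's strong deformation retraction of $G^{\beth}$" does not exist in the literature for a general reductive $G$: Thuillier's retraction is constructed only for tori, and producing it for $G$ is precisely the content of the paper's Proposition \ref{def retract prop}. The paper builds the family $\bfg_{\lambda}$ explicitly on the big cell $\Omega\cong U_-\times T\times U_+$ by writing down the semivaluation $\abs{\sum \alpha_{I,J}T^Iu^J}_{\bfg_{\lambda}}=\max\abs{\alpha_{I,J}}\lambda^{|I|+|J|}$. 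Relatedly, your claim that continuity of $H(t,p)=\gamma(t)\ast p$ is "immediate from continuity of $\ast$ and $\gamma$" is false as stated: $\ast$-multiplication is only separately continuous (Proposition \ref{cd prop} gives continuity of $p\mapsto g\ast p$ for fixed $g$), and joint continuity in $(t,p)$ is a delicate point that the paper obtains by applying Berkovich's Corollary 6.1.2 to this specific monotone family. An arbitrary continuous path in $G^{\beth}$ would not suffice.

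Second, your fixed-point verification misuses the completed tensor seminorm. The quantity $\abs{\mu^{\ast}f}_{\gamma(t),q}$ is an infimum over all representations $\mu^{\ast}f=\sum g_i\otimes f_i$; one cannot compute it by "evaluating $\gamma(t)$ on the constant function $g\mapsto q(\mu^{\ast}f(g,\cdot))$". The inequality $\abs{\mu^{\ast}f}_{\gamma(t),q}\geq q(f)$ requires an argument of the type in Lemma \ref{EqualityofmyvalandTheirval Lem} (choosing an $\alpha$-Cartesian basis). The paper avoids this entirely: it verifies once, by direct computation in the coordinates $T_i=t_i-1$, $u_j$ on $\Omega$, that $\bfg_{\lambda}\ast\bfg=\bfg$, and then deduces $\bfg_{\lambda}\ast(\bfg\ast p)=(\bfg_{\lambda}\ast\bfg)\ast p=\bfg\ast p$ from the associativity in Proposition \ref{cd prop}, which fixes every point of $\bfp(X^{\an})\supseteq\trop_G(W)$ throughout the homotopy. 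Finally, note that you only establish $\bfp(W^{\an})\subseteq\iota(\trop_G(W))$; for the retraction to be onto $\trop_G(W)$ you also need the reverse containment, which follows from Theorem A together with the fact that $\bfp$ is a retraction.
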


The construction of the map $\bfp$ was inspired by the work of Thuillier in the toric case, \cite[Section 2]{Thuillier}. The space $G^{\beth}$ is a compact analytic subdomain of the $k$-analytic space $G^{\an}$, the Shilov boundary of this space is a point with important similarities to the generic point of $G$. The notion of $\ast$-multiplication is an idea originally due to Berkovich which generalizes left-multiplication by points in $G^{\an}(k)$. 

Our work in Section \ref{starmult sec} extends the notion of tropicalization to other nontrivially valued extensions of $k$ besides $\overline{K}$. We show that for any field extension $L/k$ with valuation $\gnor$ there is a well-defined tropicalization map $\trp_{G,L}\colon X(L)\rightarrow \trop_G(X)$. For $x \in X(L)$, $\trp_G(x)= \gnor_{x}$ where $\gnor_{x}$ is the valuation $\gnor_{x}\in\cV$ given by the formula:
\begin{equation*}
\abs{f}_x=\abs{gf(x)} \quad \text{$g$ in general position}
\end{equation*} where general position means $g\in U_f$, $U_f\subseteq G(k)$ is a nonempty Zariski open such that $\abs{gf(x)}$ is constant for $g\in U_f$. It follows from Theorem \ref{IntroAthm} that tropicalization is invariant under the choice of $L$.

\begin{corr*}\label{introthmAcor cor}
Let $W$ be a subvariety of $X$, and $L$ be a nontrivially valued and algebraically closed extension of $k$. Then the closure of $\trp_{G,L}(W(L))$ in $\trop_G(X)$ is equal to $\bfp(W^{\an})$, where $W^{\an}$ is considered as a subspace of $X^{\an}$ induced by the inclusion $W\subseteq X$.
\end{corr*}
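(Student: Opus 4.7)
The plan is to reduce the corollary to Theorem \ref{IntroAthm} together with a standard density argument in Berkovich geometry. First, I would extend the commutative triangle in Theorem \ref{IntroAthm} to an arbitrary nontrivially valued algebraically closed extension $L/k$: concretely, for every $L$-point $x\colon\spec L \to X$ one obtains a canonical Berkovich point $j_L(x)\in X^{\an}$ by pushing forward the valuation on $L$, and I would verify that
\[
\bfp(j_L(x)) \;=\; \trp_{G,L}(x) \qquad \text{for all } x\in X(L).
\]
Once the notion of $\ast$-multiplication developed in Section \ref{starmult sec} is in hand, this should follow by the same computation used to prove Theorem \ref{IntroAthm} with $\overline{K}$ replaced by $L$: both sides are obtained by $\ast$-multiplying $j_L(x)$ by the unique Shilov point of $G^{\beth}$, so the "general position" formula defining $\trp_{G,L}$ reproduces exactly the valuation $\bfp(j_L(x))$.

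Next, I would invoke the standard fact that for any nontrivially valued, algebraically closed extension $L/k$ of sufficiently large transcendence degree, $W(L)$ is dense in $W^{\an}$ (where $W^{\an}$ inherits its topology as a subspace of $X^{\an}$). Since $\bfp$ is continuous, continuous images of dense subsets are dense in the image, so the set
\[
\trp_{G,L}(W(L)) \;=\; \bfp(j_L(W(L)))
\]
is dense in $\bfp(W^{\an})$ with its subspace topology in $\trop_G(X)$.

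Finally, the corollary to Theorem \ref{IntroAthm} stated immediately after it provides a strong deformation retraction of $W^{\an}$ onto $\trop_G(W)$ whose retraction map is $\bfp$; in particular $\bfp(W^{\an}) = \trop_G(W)$, and the latter is closed in $\trop_G(X)$ by its definition as the closure of $\trp_G(W(\overline{K}))$. Therefore, taking closures of both sides of the density statement inside $\trop_G(X)$,
\[
\overline{\trp_{G,L}(W(L))} \;=\; \overline{\bfp(W^{\an})} \;=\; \bfp(W^{\an}),
\]
which is precisely the identity claimed in the corollary.

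The main obstacle I foresee is the first step: carefully verifying that the \emph{a priori} ad hoc "general position" formula defining $\trp_{G,L}$ over an arbitrary $L$ matches the geometric recipe $\bfp\circ j_L$. This requires confirming that the Shilov-point construction underlying $\bfp$ is compatible with the base change from $k$ to $L$, in particular that the Shilov boundary of $G^{\beth}_L$ is still represented by (the base change of) the canonical Shilov point of $G^{\beth}$, and that $\ast$-multiplication by this point commutes with $j_L$. The remaining steps are formal consequences of continuity, density, and the earlier corollary.
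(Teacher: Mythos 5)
Your proof takes essentially the same route as the paper's: establish that $\bfp$ restricted to the image of $X(L)$ computes $\trp_{G,L}$, then use density of $W(L)$ in $W^{\an}$ together with continuity of $\bfp$ and the fact that $\bfp(W^{\an})=\trop_G(W)$ is closed. Two remarks are in order, however. First, the restriction to $L$ of \emph{sufficiently large transcendence degree} is not part of the corollary's hypotheses, and if taken literally your argument would only establish a weaker statement; the paper asserts (and uses) that $W(L)$ is dense in $W^{\an}$ for \emph{every} nontrivially valued, algebraically closed $L/k$, so this caveat should be dropped. Second, the step you flag as the ``main obstacle'' — compatibility of the Shilov-point construction with $j_L$ — is already carried out in Section \ref{starmult sec}: Lemma \ref{EqualityofmyvalandTheirval Lem} and Corollary \ref{orbit valuations cor} are proved for an arbitrary valued extension $L/k$, and they show directly that $\bfp((x,\gnor))=(\eta_Y,\gnor_{x'})$ with $\gnor_{x'}$ given by the ``general position'' formula. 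No base change of $G^{\beth}$ to $L$ enters: the point $j_L(x)$ lives in $X^{\an}$ over $k$, $\bfg$ is the Shilov point of $G^{\beth}$ over $k$, and $\ast$-multiplication is performed entirely over $k$. With those adjustments the argument coincides with the one given in the paper.
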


Our second main result is on the restriction of the map $\bfp$ to the subspace $X^{\beth}$. This result generalizes Proposition 2.12 of \cite{Thuillier} on the toric case. 

\begin{tthm}\label{IntroBthm}
There is a strong deformation retraction from $X^{\beth}$ onto $\bfp(X^{\beth})$. If the colored fan $\fF(X)$ satisfies:
\begin{equation*}\label{into thm b eqn}
\bigcup_{(\sigma,\cF)\in \fF(X)}\sigma \subseteq \cV.  \tag{$\star$}
\end{equation*}
Then $\bfp(X^{\beth})$ is homeomorphic to the the canonical compactification of the fan $F(X)=\{\sigma \, | \, (\sigma,\cF)\in \fF(X)\}$.
\end{tthm}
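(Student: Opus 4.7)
The plan is to model the argument on Thuillier's proof in the toric case, constructing a homotopy from $\ast$-multiplication by an interpolating path in $G^{\beth}$ and then analyzing the image chart-by-chart using the colored fan. First, I would produce a continuous path $\eta\colon[0,1]\to G^{\beth}$ with $\eta(0)$ the identity point of $G\hookrightarrow G^{\beth}$ and $\eta(1)$ equal to the Shilov boundary point of $G^{\beth}$. Since $k$ is trivially valued, such a path can be built by rescaling: decompose $k[G]$ into $B$-weight spaces and define $\|\cdot\|_{\eta(t)}$ as a $t$-dependent Gauss-type norm degenerating at $t=1$ to the Shilov point. Set $H\colon[0,1]\times X^{\beth}\to X^{\beth}$ by $H(t,x)=\eta(t)\ast x$. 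Continuity of $\ast$-multiplication, established earlier in the paper, yields continuity of $H$, and by construction $H(0,\cdot)=\mathrm{id}$ and $H(1,\cdot)=\bfp$. The strong retraction identity $\eta(t)\ast\bfp(x)=\bfp(x)$ reduces to idempotence of the Shilov point together with the absorbing property $\eta(t)\ast\eta(1)=\eta(1)$. Preservation of $X^{\beth}\subseteq X^{\an}$ under $H$ is verified by checking that $\ast$-multiplication by any point of $G^{\beth}$ sends bounded seminorms to bounded seminorms on each $G$-invariant affine chart.

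Second, under hypothesis $(\star)$ I would identify $\bfp(X^{\beth})$ with the canonical compactification of $F(X)$. Cover $X$ by the $G$-invariant affine opens $X_{\sigma,\cF}$ indexed by $(\sigma,\cF)\in\fF(X)$. Theorem \ref{IntroAthm} already identifies $\bfp(X_{\sigma,\cF}^{\an})$ with $\trop_G(X_{\sigma,\cF})$; the remaining task is to show that $\bfp$ carries $X_{\sigma,\cF}^{\beth}$ onto the canonical compactification $\overline{\sigma}$ of $\sigma$ (the monoid of homomorphisms $\sigma^\vee\cap\cM\to\overline{\bR}_{\geq 0}$). The role of $(\star)$ is that colors in $\cF$ now contribute only $G$-invariant valuations lying in $\sigma$, so the local picture reduces to the toric one and a direct analogue of Thuillier's argument identifies the image with $\overline{\sigma}$. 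These local identifications glue because under $(\star)$ the rules for combining colored cones collapse to the ordinary fan gluing on the uncolored cones $\sigma$.

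The main obstacle is the local identification in Step 2. Without $(\star)$, colors whose valuations lie outside $\cV$ deform the image of $X_{\sigma,\cF}^{\beth}$ away from $\overline{\sigma}$ and the gluing ceases to be controlled by $F(X)$; this is precisely why the hypothesis $(\star)$ enters. Even granting $(\star)$, the identification demands careful bookkeeping of how $\ast$-multiplication by the Shilov point interacts with $B$-semi-invariants and precisely which bounded seminorms on $X_{\sigma,\cF}$ arise as images under $\bfp$, especially at the faces of $\overline{\sigma}$ corresponding to $G$-orbit closures in $X_{\sigma,\cF}$.
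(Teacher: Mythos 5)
Your overall strategy—build a path $\eta\colon[0,1]\to G^{\beth}$ from the identity to the Shilov point, set $H(t,x)=\eta(t)\ast x$, then identify $\bfp(X^{\beth})$ chart-by-chart with $\overline{\sigma}$—matches the shape of the paper's argument. But there are two concrete gaps.

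First, your Step 2 begins ``Cover $X$ by the $G$-invariant affine opens $X_{\sigma,\cF}$.'' Spherical varieties, unlike toric varieties, do \emph{not} admit a cover by $G$-invariant affine opens. The simple subembeddings $GX_0$ attached to colored cones are $G$-stable and open, but almost never affine (e.g.\ $\bP^2\setminus O$ or $\bA^2\setminus O$ in Example~\ref{SL2 ex}). What \emph{is} affine is the $B$-stable open $X_0$ from Theorem~\ref{fundopen Thm}, and the paper's proof of Proposition~\ref{mainprop sec 6} relies on exactly this: $X_0$ is $B$-stable, affine, meets every $G$-orbit, contains $\bfp(X^{\beth})$, and its $B$-semi-invariants $k[X_0]^{(B)}=\{f:\chi_f\in\sigma^\vee\}$ (Theorem~\ref{coloredconevals Thm}(1)) generate the topology on $\overline{\sigma}$. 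Your ``careful bookkeeping of $B$-semi-invariants'' is the right instinct, but it has to be carried out on $X_0^{\beth}$, not on a nonexistent $G$-invariant affine chart. You also need Theorem~\ref{coloredconevals Thm}(2) (a $G(k)$-invariant valuation has center in $X$ iff it lies in $\sigma$) and Theorems~\ref{coloredsubspace Thm}, \ref{godforsakenlemma2 Thm} to pass to boundary orbits and identify $\tau^\perp\cap\sigma^\vee$ with the dual cone of the closed orbit of $\overline{Z}$; your proposal does not isolate these ingredients, and without them the bijection $\bfp(X^{\beth})\to\overline{\sigma}$ is not established.

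Second, your path construction via a ``$t$-dependent Gauss-type norm'' on the $B$-weight decomposition of $k[G]$ is vaguer than what is needed. A $B$-weight grading of $k[G]$ does not by itself give a coordinate system on which a Gauss norm degenerating from the identity to the Shilov point can be defined, and you would still have to verify peakedness of each $\eta(t)$ and the absorbing identity $\eta(t)\ast\eta(1)=\eta(1)$. The paper (Proposition~\ref{def retract prop}) instead uses the big Bruhat cell $\Omega\cong U_-\times T\times U_+$, whose coordinate ring $k[t_i^{\pm1},u_j]$ gives honest coordinates; after shifting $T_i=t_i-1$ one writes down $\gnor_{\bfg_\lambda}$ explicitly, the $\lambda=0$ point is the identity because all shifted coordinates vanish there, peakedness follows from \cite[Corollaire~3.14]{Angelic}, and the absorbing identity is computed directly. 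Your outline would need to be replaced or substantially tightened along these lines to actually produce the strong deformation retraction.
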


Recall that just as toric varieties are classified by fans, spherical varieties are classified by colored fans. For each spherical homogeneous space $G/H$ there is an associated vector space $N_{\bR}$. A colored fan, defined precisely in Section \ref{Colored fans sec}, is a finite collection of pairs $(\sigma, \cF)$ where $\sigma$ is a strictly convex cone in $N_{\bR}$ and $\cF$ is a finite set. The \textbf{canonical compactification} of a cone $\sigma$ is defined as $\hom(\sigma^{\vee},[0,\infty])$, where $\sigma^{\vee}$ consists of positive linear functionals on $\sigma$ and the homomorphisms are semigroup homomorphisms that preserve scaling by $\bR_{\geq 0}$. The canonical compactification of a fan is the polyhedral complex given by gluing the canonical compactifications of the cones along shared faces. See Remark \ref{star cond rem} for an explanation of why we impose \ref{into thm b eqn} and some examples where this condition does and does not hold. In Remark \ref{description when star not satisfied rem} we discuss how to give a description of $\bfp(X^{\beth})$  when \ref{into thm b eqn} does not hold.

\begin{out}
Section \ref{Berkovichgeo Sec} introduces the necessary notions from Berkovich geometry. Section \ref{starmult sec} introduces $\ast$-multiplication and studies the map $\bfp$. Section \ref{Colored fans sec} recalls the Luna-Vust classification of $G/H$-embeddings, and contains important notation for Sections \ref{tropicalization section} and \ref{Generalizing Thuillier sec}. Section \ref{tropicalization section} recalls the work of \cite{TV} and \cite{Nash1} and gives a proof of Theorem \ref{IntroAthm}. Section \ref{Generalizing Thuillier sec} recalls the work of \cite[Section 2]{Thuillier} and gives a proof of Theorem \ref{IntroBthm}.
\end{out}

\begin{conv}
Though somewhat nonstandard in the literature of spherical varieties we opt to use multiplicative notation for valuations to accommodate the conventions of Berkovich geometry. Our exact conventions are as follows. Let $A$ be a ring. By a \textbf{seminorm} on $A$ we mean a function $\gnor_A\colon A\rightarrow \bR_{\geq 0}$ such that $\abs{1}_A=1$, $\abs{0}_A=0$, $\abs{fg}_A\leq \abs{f}_A\abs{g}_A$, and $\abs{f+g}_A\leq \max({\abs{f}_A,\abs{g}_A})$.\footnote{These are non-Archimedean seminorms, which are the only ones of interest because our base field will always be non-Archimedean.} We say that $\gnor_A$ is a \textbf{norm} if $\abs{f}_A\neq 0$ for all $f\neq 0$. A (semi)norm will be called a \textbf{(semi)valuation} if it is multiplicative: $\abs{fg}_A=\abs{f}_A\abs{g}_A$.\footnote{More commonly valuations given as functions $\nu$ into the additive semigroup $\bR\sqcup\{\infty\}$, a multiplicative norm is equivalent to such a function $\nu$, set $\nu=-\log(\gnor)$.} If $k$ is a field with valuation $\gnor_k$ and $A$ is a $k$-algebra then a seminorm on $A$ will always be assumed to extend the valuation on $k$; $\abs{\lambda f}_A=\abs{\lambda}_k\abs{f}_A$ for all $\lambda\in k$. By a valued extension $L/k$ we mean a field extension where $L$ is equipped with a valuation (extending the valuation on $k$ by our previous convention). 
\end{conv}

\begin{ack}
The author would like to thank their advisor, Sam Payne, for their support and guidance during this research. The paper also benefited greatly from conversations with Tom Gannon, Sam Raskin, Yixian Wu, Hernán Iriarte, Amy Li, and Logan White.
\end{ack}

\section{Overview of Berkovich geometry}\label{Berkovichgeo Sec}

The contents of this Section are an overview of the necessary notions in the theory of Berkovich spaces, in particular we need to work with analytifications of varieties and $\beth$-spaces. Berkovich spaces provide a framework for doing analytic geometry over valued fields. Given a base field $k$ with valuation, Berkovich spaces are called $k$-analytic spaces. These are locally ringed spaces equipped with a $k$\textit{-analytic atlas}. We will not go into full detail of the theory as we are primarily concerned with the topology of these spaces, and will only introduce the notions we need. The content in this Section is laid out in detail in \cite{BerkBook}, with the exception of $\beth$-spaces, for this we refer the reader to \cite{Thuillier}. 

Throughout this Section let $k$ be a valued field, and let $X$ be a separated locally finite type $k$-scheme.

\subsection{Berkovich analytification}

\begin{dfn}
The \textbf{Berkovich analytification} of $X$, denoted $X^{\an}$, has points given by pairs $(x,\gnor)$ where $x$ is a scheme point of $X$ and $\gnor$ is a valuation on $k(x)$. There is a map $\rho\colon X^{\an}\rightarrow X$ given by $(x,\gnor)\mapsto x$. The topology on $X^{\an}$ is the coarsest such that $\rho$ is continuous, and for any open $U\subseteq X$ and any $f\in \cO_X(U)$, the map $\rho^{-1}(U)\rightarrow \bR$ given by $(x,\gnor)\mapsto \abs{f}$ is continuous. 
\end{dfn}

Any valued extension $L/k$ and map of schemes $x\colon \spec L \rightarrow  X$ defines a point in $X^{\an}$. The point is given by $(x,\gnor)$ where $x$ is the point corresponding to the image of $\spec L$ and $\gnor$ is the valuation on $L$ restricted to $k(x)$. There is also a sheaf of analytic functions on $X^{\an}$, however we only need a few details. In particular for $p\in X^{\an}$ the stalk at $p$ is a local ring. If $\kappa(p)$ is the residue field of the stalk at $p$ then $\kappa(p)$ has a norm and we let $\cH(p)$ denote the completion with respect to this norm. If $p=(x,\gnor)$ then there is a canonical isometric isomorphism from the completion of $k(x)$ with respect to $\gnor$, to $\cH(p)$. 

When $X=\spec A$, $A$ a $k$-algebra, we can alternatively describe $X^{\an}$ as being the semivaluations on $A$. The topology on $X^{\an}$ in this case will be the coarsest such that evaluation at an element of $A$ is continuous. One can see that this description of $X^{\an}$ is equivalent to the original as follows. Let the \textbf{kernel} of a semivaluation $\gnor$ on $A$, be $\ker (\gnor):=\{f\in A \, | \, \abs{f}=0\}$. Then $\gnor$ defines a valuation on the fraction field of $A/\ker (\gnor)$, which we also denote $\gnor$. This valuation is characterized by $\abs{f+\ker(\gnor)}:=\abs{f}$.

Berkovich analytification is functorial. For any morphism of $k$-schemes $f\colon X\rightarrow Y$ there is a morphism of Berkovich spaces over $k$, $f^{\an}\colon X^{\an}\rightarrow Y^{\an}$. In particular for a point $(x,\gnor)\in X^{\an}$ one has that $f^{\an}((x,\gnor))=(f(x),\abs{f_x(\cdot)})$, where $f_x \colon    k(f(x))\rightarrow k(x)$ is the induced map on function fields.

\subsection{The Berkovich spectrum and $\beth$-spaces}\label{beth spaces subsec}

 We need to introduce one other type of $k$-analytic space. Let $\cA$ be a Banach $k$-algebra, i.e. $\cA$ is a $k$-algebra equipped with a norm $\gnor_{\cA}$, and $\cA$ is complete with respect to this norm.
 
\begin{dfn}
The \textbf{Berkovich spectrum}, $\cM(\cA)$, is the set of bounded semivaluations on $\cA$, i.e. semivaluations $\gnor$ such that where $\gnor \leq \gnor_{\cA}$. The topology is the coarsest such that evaluation at an element of $\cA$ is continuous.
\end{dfn}

\begin{ex}
The one-point spaces in the category of $k$-analytic spaces are given by $\cM(L)$ where $L$ is a complete field over $k$. 
\end{ex}

Let $\cA$ and $\cB$ be Banach $k$-algebras with norms $\gnor_{\cA}$ and $\gnor_{\cB}$. One defines the completed tensor product, $\cA\hat{\otimes}_k\cB$ to be the completion of $\cA\otimes_k \cB$, with respect to the norm:
\begin{equation*}\label{prodnorm Eqn}
|\!|f|\!|_{\cA, \cB}: =\inf\left\{\max_i |\!|g_i |\!|_{\cA} |\!|f_i |\!|_{\cB}\, \bigg| \, \text{$\sum\limits_i f_i\otimes g_i\in \cA \otimes_k \cB$ such that} \,  f=\sum\limits_i f_i \otimes g_i\right\}.
\end{equation*}
The function $\gnor_{\cA{\otimes}_k \cB}$ is clearly a seminorm, that it is a norm and that $\cA{\otimes}_k \cB$ embeds isometrically into its completion follows from part 4 of Theorem 1 in \cite[Section 3]{Gr}.

\begin{prop}\cite[Chapter 3]{BerkBook}\label{norm makes product prop}
The norm $\gnor_{\cA,\cB}$ makes $\cM(\cA\hat{\otimes}_k\cB)$ the product $\cM(\cA)\times \cM(\cB)$ in the category of $k$-analytic spaces. 
\end{prop}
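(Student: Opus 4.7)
The plan is to verify the universal property of the product in the category of $k$-analytic spaces. First, I would construct the projection maps. The canonical $k$-algebra homomorphisms $\iota_{\cA}\colon \cA \to \cA \otimes_k \cB$ and $\iota_{\cB}\colon \cB \to \cA\otimes_k \cB$ sending $f \mapsto f \otimes 1$ and $g \mapsto 1 \otimes g$ are bounded of norm at most $1$: the single-term representation $f \otimes 1$ immediately gives $\|\iota_{\cA}(f)\|_{\cA,\cB} \le \|f\|_{\cA}\|1\|_{\cB} = \|f\|_{\cA}$. Since bounded algebra maps extend uniquely to the completion, we obtain bounded homomorphisms $\cA\to \cA\hat\otimes_k\cB$ and $\cB\to \cA\hat\otimes_k\cB$, which by functoriality of $\cM(-)$ yield continuous maps $\pi_{\cA}\colon \cM(\cA\hat\otimes_k\cB)\to \cM(\cA)$ and $\pi_{\cB}\colon \cM(\cA\hat\otimes_k\cB)\to \cM(\cB)$ (sending a bounded semivaluation to its restriction).

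Next, I would verify the universal property. Because the category of $k$-analytic spaces is assembled from local charts of the form $\cM(\cC)$, it suffices to check that for any Banach $k$-algebra $\cC$, a pair of morphisms $\cM(\cC)\to \cM(\cA)$ and $\cM(\cC)\to \cM(\cB)$ factors uniquely through $\cM(\cA\hat\otimes_k \cB)$. Such morphisms correspond to bounded $k$-algebra homomorphisms $\phi\colon \cA\to \cC$ and $\psi\colon \cB\to \cC$, which in turn determine a unique $k$-algebra homomorphism $\phi\otimes\psi\colon \cA\otimes_k\cB \to \cC$ on the algebraic tensor product.

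The key technical step, and the main place the ultrametric hypothesis enters, is to show $\phi\otimes\psi$ is bounded with respect to $\|\cdot\|_{\cA,\cB}$. For $h \in \cA\otimes_k\cB$ and any representation $h=\sum_i f_i\otimes g_i$, the non-Archimedean inequality on $\cC$ together with boundedness of $\phi,\psi$ gives
\[
    \left\|\sum_i \phi(f_i)\psi(g_i)\right\|_{\cC} \le \max_i \|\phi(f_i)\|_{\cC}\|\psi(g_i)\|_{\cC} \le \max_i \|f_i\|_{\cA}\|g_i\|_{\cB}.
\]
Taking the infimum over representations yields $\|(\phi\otimes\psi)(h)\|_{\cC}\le \|h\|_{\cA,\cB}$, so $\phi\otimes\psi$ extends continuously to a bounded homomorphism $\cA\hat\otimes_k\cB \to \cC$ and induces the desired map $\cM(\cC)\to \cM(\cA\hat\otimes_k\cB)$. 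Uniqueness is automatic: any map with the prescribed projections must agree with $\phi\otimes\psi$ on the dense subalgebra $\cA\otimes_k \cB$.

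Finally, I would globalize: for an arbitrary $k$-analytic space $Z$ with morphisms to $\cM(\cA)$ and $\cM(\cB)$, choose an affinoid atlas on $Z$, apply the previous paragraph on each chart $\cM(\cC_\alpha)\subseteq Z$ to obtain local factorizations through $\cM(\cA\hat\otimes_k \cB)$, and glue using the uniqueness clause on overlaps. The main obstacle is the boundedness estimate above — it is the unique point where the specific definition of $\|\cdot\|_{\cA,\cB}$ (an infimum over representations of a product-like expression) interacts essentially with the non-Archimedean triangle inequality, so every other step is formal once this inequality is in hand.
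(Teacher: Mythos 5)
The paper does not prove this proposition; it cites Berkovich \cite[Chapter 3]{BerkBook} directly, and your sketch correctly reproduces the standard argument from that source: projections via the bounded inclusions $\cA,\cB\to\cA\hat\otimes_k\cB$, boundedness of the induced map out of $\cA\otimes_k\cB$ via the ultrametric inequality, extension to the completion, and globalization over an affinoid atlas. The only minor sloppiness is that your central displayed estimate implicitly assumes $\phi,\psi$ are contractive rather than merely bounded (which would contribute a constant $\|\phi\|\cdot\|\psi\|$, harmless for boundedness), and the phrase ``any Banach $k$-algebra $\cC$'' should really be ``$k$-affinoid algebra $\cC$'' for the anti-equivalence between morphisms $\cM(\cC)\to\cM(\cA)$ and bounded homomorphisms $\cA\to\cC$ to apply; neither affects the substance of the argument.
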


\begin{notn}
To avoid cluttered subscripts, when considering $p$ in some analytic space will choose to write $\gnor_p$ for the norm on $\cH(p)$ instead of $\gnor_{\cH(p)}$. Similarly for a pair of points $p$ and $q$ we will write $\gnor_{p,q}$ for the norm on $\cH(p)\hat{\otimes}_{k}\cH(q)$ instead of $\gnor_{\cH(p),\cH(q)}$. 
\end{notn}

Consider when $k$ is trivially valued. Let $A$ be a finitely generated $k$-algebra, $A$ equipped with the trivial valuation is complete. In this case we can identify $\cM(A)$ with seminorms $\gnor$, that are bounded by $1$. Recall that if $X$ is a variety over $k$, $L/k$ is a valued field extension, and $R$ is the valuation ring of $L$, then the map of schemes $\spec L \rightarrow X$ \textbf{has center} if it factors through $\spec L\rightarrow \spec R$.  We say \textbf{the center} is the image of the closed point of $\spec R$. Furthermore if $X=\spec A$ and $A$ is an integral domain, then a valuation $\gnor$ has center in $X$ if and only if $\gnor$ is bounded by 1 on $A$. From this it follows that for any trivially valued finitely generated $k$-algebra $A$, $\cM(A)$ can be taken to be pairs $(x,\gnor)$, $x$ a scheme point of $A$, and $\gnor$ a valuation on $k(x)$ having center in $\spec A$. For a general locally finite type scheme $X$ over a trivially valued field $k$, we define $X^{\beth}$ to be given by pairs of points $(x,\gnor)$ where $\gnor$ is a valuation on $k(x)$, and $\spec k(x) \rightarrow X$ has center in $X$. So $\cM(A)=\left(\spec A\right)^{\beth}$. 

\begin{prop}\cite{Thuillier}\label{charcterization of xbeth prop}
The space $X^{\beth}$ is a compact analytic subdomain of $X^{\an}$, and they are equal when $X$ is proper.
\end{prop}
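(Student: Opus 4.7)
The plan is to reduce to the affine case via a finite affine open cover and then invoke standard Berkovich-spectrum facts. Choose a finite affine cover $X=\bigcup_{i=1}^n U_i$ with $U_i=\spec A_i$. I first claim $X^{\beth}=\bigcup_i U_i^{\beth}$ as subsets of $X^{\an}$. Indeed, a point $p=(x,\gnor)\in X^{\an}$ lies in $X^{\beth}$ exactly when the map $\spec k(x)\to X$ extends via the valuation ring $R\subseteq k(x)$ to $\spec R\to X$. Since the image of the closed point of $\spec R$ is a scheme point of $X$ lying in some $U_i$, the extension factors through $U_i$, placing $p$ in $U_i^{\beth}$. The converse is immediate, and separatedness of $X$ makes the center unique, so the identification is clean.

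Next, for each affine $U_i=\spec A_i$ the excerpt already identifies $U_i^{\beth}$ with the Berkovich spectrum $\cM(A_i)$ — here the trivial norm on $A_i$ is complete because $k$ is trivially valued and $A_i$ is finitely generated. The spectrum of any Banach $k$-algebra is nonempty, compact, and Hausdorff by the standard argument embedding it as a closed subset of a product of compact intervals (one interval per generator of $A_i$ over $k$), and by construction it is a (strictly) affinoid analytic subdomain of $U_i^{\an}$. Gluing these affinoid charts along the compatible analytic structure on intersections $(U_i\cap U_j)^{\beth}$ gives $X^{\beth}$ the structure of a compact analytic subdomain of $X^{\an}$: compactness follows since a finite union of compact subsets of the Hausdorff space $X^{\an}$ is compact.

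Finally, for the properness statement, the inclusion $X^{\beth}\subseteq X^{\an}$ always holds. Conversely, if $X$ is proper, apply the valuative criterion of properness to any $p=(x,\gnor)\in X^{\an}$: with $\cH(p)$ the associated complete valued field and $R$ its valuation ring, the map $\spec \cH(p)\to X$ extends uniquely to $\spec R\to X$, exhibiting a center for $p$ and showing $p\in X^{\beth}$. The main obstacle is the gluing step: verifying not merely that the $U_i^{\beth}$ cover $X^{\beth}$ set-theoretically but that the affinoid structures agree on overlaps and match the topology and analytic structure inherited from $X^{\an}$ requires unwinding the definition of the $k$-analytic atlas on $X^{\an}$ and checking transition data on the $(U_i\cap U_j)^{\beth}$. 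By contrast, the set-theoretic decomposition, the compactness, and the properness statement are comparatively formal consequences of the valuative criterion.
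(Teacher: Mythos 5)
The paper itself gives no proof here; it simply cites Thuillier, so there is no ``paper's proof'' to compare against, and your direct argument fills a genuine gap. Your overall strategy---reduce to a finite affine cover, identify each $U_i^{\beth}$ with $\cM(A_i)$ and use compactness of Berkovich spectra, then glue, and handle properness by the valuative criterion---is exactly the right route and matches how Thuillier proceeds. The set-theoretic identification $X^{\beth}=\bigcup_i U_i^{\beth}$ is correct: if $p=(x,\gnor)$ has center $c$ in some $U_i$, then the preimage of $U_i$ under $\spec R\to X$ is an open subscheme containing the closed point, hence all of $\spec R$, so the extension factors through $U_i$; and separatedness of $X$ gives uniqueness of the center. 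The properness direction via the valuative criterion is also fine (note you can apply it directly to $k(x)$ rather than passing to the completion $\cH(p)$; either works). Two small issues are worth flagging. First, your parenthetical that $\cM(A_i)$ embeds as a closed subset of a product of intervals ``one per generator'' is not correct: a multiplicative seminorm bounded by $1$ on $A_i$ is not determined by its values on generators (e.g.\ on $k[x]$ many distinct points of $\cM(k[x])$ assign $\abs{x}=1$). The standard compactness argument embeds $\cM(A_i)$ into $\prod_{f\in A_i}[0,\abs{f}_{A_i}]$, indexed over \emph{all} elements. Second, your use of a finite affine cover silently assumes $X$ is quasi-compact; the paper's running hypothesis is only ``locally finite type,'' under which $X^{\beth}$ need not be compact (take an infinite disjoint union of points). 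Finite type should be assumed, as it is in Thuillier's source. Finally, you correctly identify that the only remaining work is verifying that the affinoid structures on $U_i^{\beth}\cap U_j^{\beth}=(U_i\cap U_j)^{\beth}$ agree, so that $X^{\beth}$ is an analytic (not merely topological) subdomain; for the topological claims---compactness and equality with $X^{\an}$ when $X$ is proper---your argument is complete as written.
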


\subsection{The reduction map}
Let $\cA^{\circ}:=\{f\in \cA\, | \, \abs{f}_{\cA}\leq 1\}$ and $\cA^{\circ \circ}:=\{f\in \cA\, | \, \abs{f}_{\cA}< 1\}$. Then the \textbf{reduction} of $\cA$ is said to be $\widetilde{\cA}: = \cA^{\circ}/\cA^{\circ \circ}$. Any map of Banach $k$-algebras $\cA\rightarrow \cB$ induces a map of rings $\widetilde{\cA}\rightarrow \widetilde{\cB}$. In particular for $p\in \cM(\cA)$ there is a map of rings $h_p\colon \widetilde{\cA}\rightarrow \widetilde{\cH(p)}$. Then there is a map of sets called the \textbf{reduction map}, $r\colon \cM(\cA)\rightarrow \spec(\widetilde{\cA})$ defined by $p\mapsto \ker(h_p)$.

When $A$ is a finitely generated algebra with the trivial norm we have $A^{\circ}=A$ and $A^{\circ \circ}=0$, so reduction gives a map of sets $r\colon \cM(A)\rightarrow \spec A$. More generally we have a map $r\colon X^{\beth}\rightarrow X$. When $X^{\beth}$ is viewed as pairs $(x,\gnor)$, reduction maps $(x,\gnor)$ to its center. This map is anticontinuous \cite[Corollary 2.4.2]{BerkBook}, and surjective \cite[Proposition 2.4.4]{BerkBook}. Recall that anticontinuous means that the preimage of an open set is closed.

\subsection{The Shilov boundary}

A \textbf{boundary} of $\cM(\cA)$ is a closed subset of $\cM(\cA)$, such that for every $f\in \cA$ the function $\ev_{f}\colon \cM(\cA)\rightarrow \bR$ given by $\gnor \mapsto |\!|f|\!|$, attains its maximum. A unique minimal boundary is said to be the \textbf{Shilov boundary}. \begin{prop}\cite[Proposition 2.4.4]{BerkBook}\label{Shilovboundary and reduction prop}
The reduction map $r\colon \cM(\cA)\rightarrow \spec (\widetilde{\cA})$ gives a bijection between the Shilov boundary of $\cM(\cA)$ and the generic points of the components of $\spec (\widetilde{\cA})$.
\end{prop}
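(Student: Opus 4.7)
The plan is to identify the Shilov boundary explicitly as the set $\Gamma = \{p_1, \dots, p_n\}$ of preimages under $r$ of the (finitely many) generic points $\eta_1, \dots, \eta_n$ of the irreducible components of $\spec \widetilde{\cA}$. Finiteness comes from $\widetilde{\cA}$ being a finitely generated $\widetilde{k}$-algebra, which holds when $\cA$ is strictly $k$-affinoid; in the more general Banach $k$-algebra setting this may first require replacing the norm on $\cA$ with the spectral seminorm $\rho(f) = \lim_n |f^n|_{\cA}^{1/n}$ and reducing to the strict case. The three statements to verify are: (i) for each $\eta_i$, the preimage $r^{-1}(\eta_i)$ is a single point $p_i$; (ii) $\Gamma$ is a boundary; (iii) any closed boundary $B \subseteq \cM(\cA)$ contains $\Gamma$.

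For (ii), the key input is the identity $\rho(f) = \sup_{q \in \cM(\cA)} |f|_q$. After rescaling we may assume $\rho(f) = 1$, so $f \in \cA^{\circ}$ and (using that $\widetilde{\cA}$ is reduced for the spectral norm) the image $\bar f \in \widetilde{\cA}$ is nonzero. Then $\bar f$ must fail to vanish at some $\eta_i$, i.e.\ $\bar f \notin \mathfrak{p}_i$, and for any $q \in r^{-1}(\eta_i)$ the induced map $h_q$ satisfies $h_q(\bar f) \neq 0$ in the field $\widetilde{\cH(q)}$, hence $|f|_q = 1 = \rho(f)$, so $\Gamma$ is a boundary. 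For (i), a point $q \in r^{-1}(\eta_i)$ restricts to a multiplicative seminorm on $\widetilde{\cA}/\mathfrak{p}_i$ equal to $1$ on every element outside $\mathfrak{p}_i$, and together with multiplicativity this is forced by general facts about extensions of valuations from an integral domain to its fraction field, pinning $p_i$ down uniquely. For (iii), I plan to separate components: pick $\bar g_i \in \widetilde{\cA}$ nonzero at $\eta_i$ and zero at each $\eta_j$ for $j \neq i$, lift to $g_i \in \cA^{\circ}$, so $|g_i|_{p_i} = 1 > |g_i|_{p_j}$, and argue that no boundary can omit $p_i$ while still attaining $\sup_q |g_i|_q$.

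The hard part is (iii), specifically controlling the values $|g_i|_q$ for points $q \in \cM(\cA) \setminus \Gamma$ whose reduction $r(q)$ lies in the closure of $\eta_i$, since such $q$ can satisfy $|g_i|_q = 1$ as well. The strategy I envision is a Noetherian induction on closed subsets of $\spec \widetilde{\cA}$: points with reductions in $V(\mathfrak{p}_i) \setminus \{\eta_i\}$ are governed by proper closed subschemes, and by multiplying $g_i$ by further separators or by elements vanishing on those subschemes (and passing to sufficiently high powers) one can construct a single function whose modulus is uniquely maximized at $p_i$. Once such a function is produced, every boundary must contain $p_i$, completing the identification of the Shilov boundary with $\Gamma$. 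A secondary technicality is justifying that $\widetilde{\cA}$ is finitely generated and reduced as a $\widetilde{k}$-algebra, which is automatic in the strictly $k$-affinoid setting but otherwise requires the preliminary reduction via the spectral seminorm.
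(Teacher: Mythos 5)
This proposition is cited from Berkovich's book (Proposition 2.4.4) and the paper does not supply its own proof, so there is no internal argument to compare against; I will instead assess your sketch on its merits. Your overall decomposition into (i) uniqueness of the fiber over each generic point, (ii) $\Gamma$ is a boundary, (iii) $\Gamma$ lies in every closed boundary, is the right skeleton, and step (ii) is essentially correct once you have passed to the spectral seminorm so that $\widetilde{\cA}$ is reduced. But steps (i) and (iii) have real gaps. For (i), knowing that $r(q)=\eta_i$ only tells you, for $f\in\cA^{\circ}$ with spectral radius $1$, whether $|\!|f|\!|_q=1$ or $|\!|f|\!|_q<1$; it does not determine the actual value of $|\!|f|\!|_q$ when $\bar f\in\mathfrak p_i$, so ``general facts about extending a valuation from a domain to its fraction field'' do not close the argument. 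The honest proof of uniqueness uses specific structure of strictly $k$-affinoid algebras (Noether normalization, the fact that $\rho(f)\in\sqrt{|k^{\ast}|}$, and control of value groups and residue transcendence degrees of $\cH(q)$), none of which appears in your sketch.

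For (iii) the approach cannot work as stated. You propose to ``construct a single function whose modulus is uniquely maximized at $p_i$,'' but no such function exists in general: already for $\cA=k\langle T\rangle$ the Shilov boundary is the single Gauss point, yet for every $f\in\cA$ the maximum of $|\!|f|\!|_q$ is attained on an infinite set (e.g. $f=T$ attains the value $1$ on the whole Gauss circle). So your Noetherian induction is chasing an object that isn't there. The mechanism that actually forces $p_i$ into every closed boundary $B$ is compactness together with anticontinuity of $r$: for each $g\in\cA^{\circ}$ with $\bar g\neq 0$ in $\widetilde{\cA}/\mathfrak p_i$, the set $r^{-1}(D(\bar g))=\{q : |\!|g|\!|_q=1\}$ is closed and contains $p_i$, and the intersection of all such sets (intersected with the component cut out by your separator $f_i$) is exactly $\{p_i\}$ by the uniqueness in (i). If $p_i\notin B$, compactness of $B$ yields finitely many $g_1,\dots,g_m$ with $B\cap\bigcap_j r^{-1}(D(\bar g_j))=\emptyset$; taking $g=f_i\,g_1\cdots g_m$ and using that $\widetilde{\cA}/\mathfrak p_i$ is a domain gives $\bar g\neq 0$, hence $\rho(g)=1$, while $|\!|g|\!|_q<1$ on all of $B$, contradicting that $B$ is a boundary. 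This compactness/anticontinuity step, and the spectral-norm preliminaries that make $\widetilde{\cA}$ a reduced finitely generated $\tilde k$-algebra, are the substantive content your sketch is missing.
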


\section{The retraction map}\label{starmult sec}

In this Section we assume $k$ is a trivially valued and algebraically closed field, $G$ is a connected linear algebraic group over $k$, and $X$ is a $G$-variety. Let $m$ to denote the action map $m\colon G\times X \rightarrow G$. The objective is to introduce $\ast$-multiplication, use it construct a retraction of topological spaces $\bfp\colon X^{\an}\rightarrow X^{\an}$, and give a description of the image of $\bfp$. Furthermore we show that when $G$ is reductive we can construct a strong deformation from $X^{\an}$ onto $\bfp(X^{\an})$ using $\ast$-multiplication.

The notion of $\ast$-multiplication is taken directly from \cite[Chapter 5]{BerkBook}. Properties specific to $\ast$-multiplication by the unique point in the Shilov boundary of $G^{\beth}$ are directly inspired by the results on tori from \cite[Section 2]{Thuillier}.

\subsection{\texorpdfstring{$\ast$}{*}-multiplication}

\begin{dfn}
We say that a commutative Banach $k$-algebra $\cA$ is \textbf{peaked} if for any valued extension $L/k$ the norm on $L\hat{\otimes}_k\cA$ is a valuation. We say a point $p$ of a $k$-analytic space is peaked if the completed residue field $\cH(p)$ is peaked.
\end{dfn}

\begin{prop}\label{PeakedShilov Prop}
All the the points of $G^{\an}$ are peaked.
\end{prop}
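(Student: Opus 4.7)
My plan is to invoke the general theorem of \cite{BerkBook} that every point of a $k$-analytic group is peaked, after verifying that $G^{\an}$ carries the structure of such a group.

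First I would check the analytic group structure on $G^{\an}$. Since $G$ is an algebraic group, the multiplication, inversion, and unit morphisms analytify by functoriality. Using Proposition \ref{norm makes product prop} (together with the fact that analytification commutes with fiber products, which one obtains by gluing the affine statement), we have a canonical isomorphism $(G\times G)^{\an}\cong G^{\an}\times G^{\an}$ in $k$-analytic spaces, so the analytified structure maps assemble into a group object in the $k$-analytic category.

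Next I would sketch the argument. The identity $e\in G(k)\subseteq G^{\an}$ is a $k$-rational point, so $\cH(e)=k$ and hence $e$ is trivially peaked: for any valued extension $L/k$, the Banach $L$-algebra $L\hat{\otimes}_k k=L$ has multiplicative norm. To propagate peakedness to an arbitrary $p\in G^{\an}$, one uses the homogeneity of $G$. The key step is to lift $p$ to its canonical $\cH(p)$-rational point $\tilde p$ on $G_{\cH(p)}:=G\times_k\spec \cH(p)$, then use left translation by $\tilde p^{-1}$ — an honest morphism of $\cH(p)$-schemes — to produce a symmetry relating $\cH(p)\hat{\otimes}_k L$ to $\cH(e)\hat{\otimes}_k L=L$. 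Multiplicativity of the norm on $L\hat{\otimes}_k\cH(p)$ then follows from the trivial case of the identity.

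The main obstacle is the bookkeeping: one must check carefully that the translation symmetry described above is an isometry of the relevant Banach algebras, which requires a delicate analysis of how completed tensor products behave under base change to $\cH(p)$ and how the analytified translation interacts with these norms. These verifications are carried out in detail in \cite{BerkBook}, from which the proposition follows as an immediate application.
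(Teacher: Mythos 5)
There is a genuine gap here: the ``general theorem'' you want to invoke does not exist, and the statement ``every point of a $k$-analytic group is peaked'' is false over a general base field. For instance, take $G=\bG_m$ over $k=\bQ_p$ and let $p$ be a rigid point with $\cH(p)=k'$ a nontrivial Galois extension of $k$; then $k'\hat{\otimes}_k k'$ has zero divisors, so its norm cannot be multiplicative and $p$ is not peaked. Your argument uses no hypothesis on $k$, so it would prove this false statement, and indeed it breaks at the translation step. Left translation by $\tilde p^{-1}$ is an automorphism of $G_{\cH(p)}$ \emph{over $\cH(p)$}; it carries the canonical $\cH(p)$-rational lift of $p$ to the identity section, but it is not a morphism over $G^{\an}$ and does not identify the fiber $\cM(\cH(p)\hat{\otimes}_k L)$ of $G_L^{\an}\rightarrow G^{\an}$ over $p$ with the one-point fiber $\cM(L)$ over $e$. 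Peakedness is an intrinsic property of the Banach $k$-algebra $\cH(p)$ --- multiplicativity of the norm on $\cH(p)\hat{\otimes}_k L$ for all valued $L/k$ --- and no automorphism of $G$ or of $G_{\cH(p)}$ changes the isomorphism class of $\cH(p)$ over $k$. (Replacing translation by $\ast$-multiplication by ``$p^{-1}$'' is circular, since $\ast$-multiplication is only defined for peaked points.) Berkovich's Chapter 5 shows that $k$-rational points are peaked and that peakedness is stable under $\ast$-products, but it does not contain the blanket claim you attribute to it.

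What actually makes the proposition true is that $k$ is algebraically closed, not that $G$ is a group: the paper's proof is a direct citation of Poineau \cite{Angelic}, Corollaire 3.14, which asserts that over an algebraically closed non-Archimedean field \emph{every} point of \emph{every} $k$-analytic space is peaked (universally multiplicative). If you want a self-contained argument you would need to reproduce Poineau's analysis (via graded reductions/tame extensions of $\cH(p)$), for which the group structure on $G$ is irrelevant.
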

\begin{proof}
This is a particular case of \cite[Corollaire 3.14]{Angelic}.
\end{proof}

\begin{dfn}
Let $g\in G^{\an}$, because $g$ is peaked the norm $\gnor_{g,p}$ is a valuation and is thus an element of $\cM(\cH(g))\times \cM(\cH(p))$. There is a map of sets $X^{\an}\rightarrow X^{\an}$ given by $p \mapsto g\ast p$; where  $g \ast p$ is the image of $\gnor_{g,p}$ under the composition of maps of $k$-analytic spaces:
\[
\cM(\cH(g))\times \cM(\cH(p))\hookrightarrow G^{\an}\times X^{\an} \xrightarrow{m^{\an}} X^{\an}.
\] This is called \textbf{$\ast$-multiplication by $g$}.
\end{dfn}

\begin{ex}
Let $g\in G^{\an}(k)=G(k)$. The map $X^{\an}\rightarrow X^{\an}$ given by $p\mapsto g\ast p$ is equal to the map induced by left multiplication by $g$.
\end{ex}

Below are some general facts about $\ast$-multiplication.

\begin{prop}\cite[Proposition 5.2.8]{BerkBook}\label{cd prop} Let $G$ and $H$ be algebraic groups acting on varieties $X$ and $Y$, respectively. Let $\phi\colon G\rightarrow H$ be a morphism of algebraic groups, and $\psi\colon  X\rightarrow Y$ a morphism of varieties.
\begin{enumerate}
    \item  The map $X^{\an} \rightarrow X^{\an}$ given by $\ast$-multiplication by a point $g\in G^{\an}$ is continuous.
    \item If $g$ and $h$ are points of $G^{\an}$ and $H^{\an}$, respectively, and $p$ a point of $X^{\an}$, then $g\ast(h\ast p)=(g\ast h)\ast p$
    \item Given a commutative diagram:
    \[ 
\begin{tikzcd}
G^{\an} \times X^{\an} \arrow{d}{\phi^{\an} \times \psi^{\an}} \arrow{r}{} & X^{\an} \arrow{d}{\psi^{\an}} \\
H^{\an} \times Y^{\an} \arrow{r}{} & Y{\an}
\end{tikzcd}
    \]  
   one has that $\psi^{\an}(g\ast p)=\phi^{\an}(g)\ast\psi^{\an}(p)$.
\end{enumerate}
\end{prop}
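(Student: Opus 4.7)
The statement is cited from \cite[Proposition 5.2.8]{BerkBook}; here is a sketch of how I would reproduce the argument. The common thread in all three parts is Proposition \ref{norm makes product prop}, identifying $\cM(\cH(g) \hat{\otimes}_k \cH(p))$ with the categorical product $\cM(\cH(g)) \times \cM(\cH(p))$ of $k$-analytic spaces, together with the peakedness of $g$ (Proposition \ref{PeakedShilov Prop}), which ensures $\gnor_{g,p}$ is genuinely a valuation and hence defines a single point of $G^{\an} \times X^{\an}$.

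For part (1), factor the map $p \mapsto g \ast p$ as $X^{\an} \xrightarrow{\sigma_g} (G \times X)^{\an} \xrightarrow{m^{\an}} X^{\an}$, where $\sigma_g(p)$ is the point of $(G \times X)^{\an}$ corresponding to the valuation $\gnor_{g,p}$. Continuity of $m^{\an}$ is functoriality of analytification, so it suffices to prove $\sigma_g$ continuous. Working locally on $\spec(A \otimes B) \subseteq G \times X$, this amounts to checking that $p \mapsto \abs{f}_{g,p}$ is continuous for every $f \in A \otimes B$. For a simple tensor $f = a \otimes b$, multiplicativity of $\gnor_{g,p}$ gives $\abs{a \otimes b}_{g,p} = \abs{a}_g \cdot \abs{b}_p$, which is manifestly continuous in $p$. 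For a general $f$, one uses that peakedness collapses the tensor-product spectrum to a single point, so $\gnor_{g,p}$ is the unique valuation extending $\gnor_g$ and $\gnor_p$; the infimum in the definition of the tensor-product seminorm is then controlled from both sides by maxima of simple-tensor norms, and a standard approximation argument upgrades the pointwise continuity to continuity of $\sigma_g$.

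Parts (2) and (3) are then diagrammatic consequences. For part (2), analytify the associativity of the action, $m \circ (\mathrm{id}_G \times m) = m \circ (\mu \times \mathrm{id}_X)$ with $\mu$ the group multiplication, and trace the valuation on the triple completed tensor product $\cH(g) \hat{\otimes}_k \cH(h) \hat{\otimes}_k \cH(p)$ along both paths; by functoriality of $(-)^{\an}$ together with Proposition \ref{norm makes product prop} applied twice, both routes produce the same point of $X^{\an}$. For part (3), apply $(-)^{\an}$ to the given commutative square of varieties and chase $\gnor_{g,p}$ through it, noting that the maps $\phi^{\an}$ and $\psi^{\an}$ intertwine the tensor-product valuations on $\cH(g) \hat{\otimes}_k \cH(p)$ and $\cH(\phi^{\an}(g)) \hat{\otimes}_k \cH(\psi^{\an}(p))$ by construction. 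The only genuine obstacle in the whole argument is the continuity verification in (1); everything else is bookkeeping once one accepts the product identification.
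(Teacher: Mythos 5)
The paper does not actually prove this statement; it is quoted verbatim from Berkovich's book with a citation, so there is no in-text argument to compare against. Your sketch of how the cited proof goes is the right overall shape: reduce part (1) to continuity of the section $\sigma_g\colon X^{\an}\rightarrow G^{\an}\times X^{\an}$, then derive (2) and (3) by analytifying the associativity square and the given equivariance square and chasing the completed-tensor-product valuation through them. That bookkeeping for (2) and (3) is fine.

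However, the justification you give for continuity of $\sigma_g$ contains a genuine error. Peakedness of $g$ does \emph{not} collapse $\cM\bigl(\cH(g)\,\hat{\otimes}_k\,\cH(p)\bigr)$ to a single point, and $\gnor_{g,p}$ is \emph{not} the unique valuation on the completed tensor product restricting to $\gnor_g$ and $\gnor_p$. What peakedness buys you is that the tensor-product \emph{norm} is itself multiplicative, hence is a valuation and therefore a single well-defined point of the spectrum (in fact the unique Shilov boundary point), but the spectrum typically has many other points. Because that intermediate claim is false, your ``standard approximation argument'' for lower semicontinuity is unsupported. Upper semicontinuity of $p\mapsto \abs{f}_{g,p}$ is indeed immediate (infimum of continuous functions of $p$), but lower semicontinuity is the real content of Berkovich's Proposition 5.2.8(i) and needs an actual argument, e.g.\ comparing $\abs{f}_{g,p}$ to values at $k$-rational (or algebraic) approximations of $p$ and exploiting multiplicativity via powers $f^n$; it cannot be waved away by uniqueness of the extension, which does not hold. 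If you want a self-contained proof, that is the step you need to fill in honestly.
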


Let $\bfg\in G^{\an}$ be the point given by the trivial valuation. This is the unique point in the Shilov boundary of $G^{\beth}$. Let $\bfp\colon X^{\an}\rightarrow X^{\an}$ be given by $p\mapsto \bfg\ast p$. By Proposition \ref{cd prop} part 1, $\bfp$ is a continuous map of topological spaces. The remainder of this section will be devoted to studying this map.

\begin{rem}
We can alternatively describe $\bfp$ in the following way. We have that $\bfp(p)$ is the image of the unique point in the Shilov boundary of $G^{\beth}\times \cM(\cH(p))$ under the map of $k$-analytic spaces:
\[
G^{\beth}\times \cM(\cH(p)) \hookrightarrow G^{\beth}\times X^{\an}\rightarrow X^{\an}.
\] So when $G=T$ is a torus, $\bfp$ is the retraction map defined in \cite[Section 2]{Thuillier}.
\end{rem}

\begin{ex}
Let $X=T=\spec k[t_1^{\pm 1},\ldots t_n^{\pm 1}]$ be the $n$-dimensional torus, and equip $k[t_1^{\pm 1},\ldots t_n^{\pm 1}]$ with the trivial valuation. Let $\lambda\in [0,1]$, and let $\bfg_{\lambda}$ be the point of $T^{\beth}$ corresponding to the semivaluation:
\[
|\!|\sum\limits_{I}a_It^I|\!|=\max_{I}\lambda^{|I|}
\] where $I$ denotes a multi-index and if $I=(I_1,\ldots I_n)$ then $|I|=I_1+\cdots I_n$. Then one can see via computation that for any valued extension $L$, the norm $\gnor_{\bfg_{\lambda},L}$ on $\cH(\bfg_{\lambda})\hat{\otimes}_kL$ is multiplicative. For $\lambda=0$ this is because $\cH(\bfg_\lambda)=k$. For $\lambda\in(0,1]$ $\bfg_\lambda$ defines a norm on $k[t_1^{\pm 1},\ldots t_n^{\pm 1}]$. For $f=\sum_Ia_It^I\in k[t_1^{\pm 1},\ldots t_n^{\pm 1}] \otimes_k L =L[t_1^{\pm 1},\ldots t_n^{\pm 1}]$ we compute that:
\[
|\!|f|\!|_{\bfg_{\lambda},L}=\max_{I}|\!|a_I|\!|_{L}\lambda^{|I|}
\] and this is a valuation.

Let $p\in T^{\an}$ be given by $x=(x_1,\ldots x_n)\in T(L)$, where $L$ is a valued extension of $k$, so $p=(x,\gnor_L)$. Let $m_x$ the composition $T\times \spec k(x) \hookrightarrow T\times T \xrightarrow{m} T$. Then for $f=\sum_Ia_It^I\in \spec k[t_1^{\pm 1},\ldots t_n^{\pm 1}]$ seminorm $\bfg_{\lambda}\ast p$ is given by:
\[
|\!|f|\!|_{\bfg_{\lambda}\ast p}=|\!|m_x^{\ast}f|\!|_{\bfg_{\lambda}, p}=|\!|\sum\limits_I a_I t^I \otimes x^I|\!|_{\bfg_{\lambda}, p}=\max_I|\!|a_Ix^I|\!|_L\lambda^{|I|}.
\]
\end{ex}

\begin{cor}\label{Idpotentcy Cor}
The map: $\bfp\colon  X^{\an}\rightarrow X^{\an}$ is idempotent, i.e. $\bfg\ast \bfg=\bfg$.
\end{cor}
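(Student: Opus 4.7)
The plan is to reduce idempotency of $\bfp$ to the identity $\bfg \ast \bfg = \bfg$ in $G^{\an}$. By associativity of $\ast$-multiplication (Proposition \ref{cd prop}(2), applied with $G$ acting on itself by left multiplication and on $X$), $\bfg \ast (\bfg \ast p) = (\bfg \ast \bfg) \ast p$ for every $p \in X^{\an}$, so once $\bfg \ast \bfg = \bfg$ is established, $\bfp \circ \bfp = \bfp$ follows immediately.

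To establish $\bfg \ast \bfg = \bfg$, I would compute the corresponding semivaluation on $k[G]$ directly. Since $\bfg$ corresponds to the trivial valuation on $k(G)$, we already have $\cH(\bfg) = k(G)$. Unpacking the definition, for $f \in k[G]$ on an affine open containing the scheme-theoretic image one has $\abs{f}_{\bfg \ast \bfg} = \abs{m^*(f)}_{\bfg, \bfg}$, where $\gnor_{\bfg, \bfg}$ is the product seminorm on $\cH(\bfg) \hat{\otimes}_k \cH(\bfg) = k(G) \hat{\otimes}_k k(G)$. The main computational step is to show that this product seminorm is itself the trivial valuation: for any nonzero $h$ in the algebraic tensor product $k(G) \otimes_k k(G)$ (which is a domain, since it embeds into $k(G \times G)$ via the two projections) and any decomposition $h = \sum_i a_i \otimes b_i$, each product $\abs{a_i}_\bfg \abs{b_i}_\bfg$ lies in $\{0,1\}$; since $h \neq 0$, at least one simple tensor $a_i \otimes b_i$ must be nonzero, giving $\max_i \abs{a_i}_\bfg \abs{b_i}_\bfg = 1$. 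Taking the infimum over all decompositions, $\abs{h}_{\bfg, \bfg} = 1$. The trivial valuation makes $k(G) \otimes_k k(G)$ already complete, so this also describes $\gnor_{\bfg, \bfg}$ on the completed tensor product.

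To finish, note that $m\colon G \times G \to G$ is surjective and hence dominant, so $m^*\colon k[G] \to k[G] \otimes_k k[G]$ is injective, and $m^*(f) \neq 0$ whenever $f \neq 0$. Combined with the previous step, $\abs{f}_{\bfg \ast \bfg} = 1$ for every nonzero $f \in k[G]$, meaning $\bfg \ast \bfg$ is the trivial valuation on $k(G)$, which is $\bfg$ itself. No step here is a genuine obstacle; the only point requiring care is verifying that the product seminorm on $k(G) \otimes_k k(G)$ is truly the trivial valuation (not merely bounded by $1$), which is precisely what the decomposition argument achieves.
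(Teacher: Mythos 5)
Your proposal is correct and follows essentially the same route as the paper: reduce to $\bfg \ast \bfg = \bfg$ via Proposition \ref{cd prop}(2), then compute that the product seminorm $\gnor_{\bfg,\bfg}$ on $\cH(\bfg)\hat{\otimes}_k\cH(\bfg) = k(G)\otimes_k k(G)$ is the trivial valuation by examining decompositions into simple tensors. You spell out a step the paper leaves implicit — that $m^*$ is injective because $m$ is dominant, so pushing the trivial valuation forward along $m^{\an}$ again yields the trivial valuation on $k(G)$ — which is a welcome clarification but not a genuinely different argument.
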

\begin{proof}
One has that $\cH(\bfg)=k(G)$ and the norm on $k(G)$ defined by $\bfg$ is the trivial valuation so we have that for $f\in \cH(\bfg)\otimes_k\cH(\bfg)$:
\[
|\!|f |\!|_{\bfg,\bfg}=\inf\{\max_i |\!|g_i |\!|_{\bfg}|\!|f_i |\!|_{\bfg} \, | \, f=\sum\limits_i g_i\otimes f_i\}
\]
which is 0 or 1, and 0 if and only if $f=0$. Therefore it is the trivial norm. so $\bfg \ast \bfg =\bfg$. So by Proposition \ref{cd prop} $\bfp$ is idempotent.
\end{proof}

\begin{cor}\label{Retraction cor}
The map $\bfp\colon X^{\an}\rightarrow X^{\an}$ is a retraction of topological spaces.
\end{cor}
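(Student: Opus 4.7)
The plan is to observe that a retraction onto a subspace is exactly a continuous idempotent endomorphism, since such a map fixes every point in its image. Both pieces are already available.

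First, I would recall that continuity of $\bfp$ is precisely Proposition \ref{cd prop}, part 1, applied to $g = \bfg$. Next, I would invoke Corollary \ref{Idpotentcy Cor}, which gives $\bfg \ast \bfg = \bfg$, and combine it with the associativity statement in Proposition \ref{cd prop}, part 2, to conclude
\[
\bfp(\bfp(p)) = \bfg \ast (\bfg \ast p) = (\bfg \ast \bfg) \ast p = \bfg \ast p = \bfp(p)
\]
for every $p \in X^{\an}$. Thus $\bfp$ restricted to $\bfp(X^{\an})$ is the identity.

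A continuous self-map that is the identity on its image is, by definition, a retraction of topological spaces onto that image, which completes the argument. There is no real obstacle here; the statement is a formal corollary packaging together continuity (Proposition \ref{cd prop}) and idempotency (Corollary \ref{Idpotentcy Cor}).
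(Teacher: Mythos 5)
Your argument is exactly the one the paper intends: continuity from Proposition \ref{cd prop} part 1, idempotency from Corollary \ref{Idpotentcy Cor} via $\bfg\ast\bfg=\bfg$ and associativity, and the standard fact that a continuous idempotent self-map is a retraction onto its image. This matches the paper, which presents the corollary as an immediate consequence of those same two results.
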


\begin{prop}\label{imageofgstar Prop}
Let $p=(x,\gnor)\in X^{\an}$, then $\bfp(p)=(\eta_{Y},\overline{\gnor})$. Where $\eta_{Y}$ is the generic point of $Y$, $Y$ is the Zariski closure of the image of the composition:
\[
G\times \spec(k(x)) \hookrightarrow G\times X \xrightarrow{m} X
\]
and $\overline{\gnor}$ is a $G(k)$-invariant valuation on $k(Y)$.
\end{prop}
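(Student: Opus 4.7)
The plan is to first identify the underlying scheme point and residue valuation of $\bfp(p)$ by unwinding the definition of $\ast$-multiplication, and then to deduce $G(k)$-invariance using the Example identifying $g\ast(-)$ with left multiplication for $g\in G(k)$, together with the associativity in Proposition \ref{cd prop}.

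For the first step, recall that $\cH(\bfg)=k(G)$ with the trivial valuation, and that $\bfg$ is peaked by Proposition \ref{PeakedShilov Prop}; hence $\gnor_{\bfg,p}$ is actually a valuation on the integral domain $\cH(\bfg)\hat{\otimes}_k\cH(p)$. Under the embedding $\cM(\cH(\bfg))\times\cM(\cH(p))\hookrightarrow G^{\an}\times X^{\an}$, this point lies over the scheme point of $G\times X$ obtained from the composition $\spec(k(G)\otimes_k k(x))\to G\times X$. Because $G$ is geometrically integral over the algebraically closed field $k$, the product $G\times \spec k(x)$ is integral and has a unique generic point $\eta$, which is exactly the scheme point just described. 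Pushing forward by $m$ sends $\eta$ to the generic point of the image closure of $G\times \spec k(x)\to X$, which is $\eta_Y$ by definition. The valuation $\gnor_{\bfg,p}$ restricts via the induced inclusion of function fields $k(Y)=k(\eta_Y)\hookrightarrow k(\eta)$ to a valuation $\overline{\gnor}$ on $k(Y)$; it is indeed a valuation rather than a seminorm because $\gnor_{\bfg,p}$ itself is one. This establishes $\bfp(p)=(\eta_Y,\overline{\gnor})$.

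For $G(k)$-invariance, let $g\in G(k)\subset G^{\an}$. By the Example preceding Proposition \ref{cd prop}, $\ast$-multiplication by such a $g$ coincides with the analytification of left multiplication $L_g$, both on $X^{\an}$ and on $G^{\an}$. Since $L_g$ is an automorphism of $G$ fixing the generic point $\eta_G$ and preserving the trivial valuation on $k(G)$, we have $g\ast\bfg=L_g^{\an}(\bfg)=\bfg$. Combining this with part (2) of Proposition \ref{cd prop},
\[
L_g^{\an}(\bfp(p))=g\ast(\bfg\ast p)=(g\ast\bfg)\ast p=\bfg\ast p=\bfp(p).
\]
Thus $\bfp(p)$ is fixed by every left translation $L_g$ with $g\in G(k)$. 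Since $\bfp(p)=(\eta_Y,\overline{\gnor})$, this is equivalent to $Y$ being $G$-stable (so that $G(k)$ acts on $k(Y)$) together with $\overline{\gnor}$ being $G(k)$-invariant.

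The main obstacle is the scheme-theoretic bookkeeping in the first step: tracing how the valuation $\gnor_{\bfg,p}$ on $\cH(\bfg)\hat{\otimes}_k\cH(p)$ restricts under $m^{\an}$ to data at $\eta_Y$ requires carefully unwinding the content of the embedding $\cM(\cH(\bfg))\times\cM(\cH(p))\hookrightarrow G^{\an}\times X^{\an}$ and the identification of the scheme-theoretic image of the generic point. Once that identification is made, the rest of the statement follows essentially formally from the functoriality results already established for $\ast$-multiplication.
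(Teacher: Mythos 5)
Your proposal is correct and follows essentially the same route as the paper: identify $\rho(\bfp(p))=\eta_Y$ by tracking the generic point of $G\times\spec k(x)$ through $m$, then deduce $G(k)$-invariance from $g\ast\bfg=\bfg$ together with associativity of $\ast$-multiplication and the identification of $g\ast(-)$ with $L_g^{\an}$. The only cosmetic difference is that the paper packages the first step in a commutative diagram while you argue it directly from integrality of $G\times\spec k(x)$.
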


The proof is similar to the proofs of \cite[Proposition 2.5 (2)]{Thuillier} and \cite[Proposition 2.3 (ii)]{Thuillier}, which prove the above for $G=T$ a torus.

\begin{proof}
Recall that there is a map $\rho:X^{\an}\rightarrow X$ given by $(x,\gnor)\mapsto x$. To see that $\rho(\bfp(p))=\eta_Y$ notice that there is a commutative diagram:
\[
\begin{tikzcd}
\cM(\cH(\bfg))\times\cM(\cH(p)) \arrow[d,hookrightarrow] \arrow{r}{\rho} & \spec k(\eta_{G})\times \spec k(x) \arrow[d,hookrightarrow]\\
G^{\an}\times X^{\an} \arrow{d}{m^{\an}} \arrow{r}{\rho} & G\times X \arrow{d}{m}\\
X^{\an} \arrow{r}{\rho} & X
\end{tikzcd}
\]
where $\eta_G$ is the generic point of $G$. In this diagram the image of the generic point of $\spec k(\eta_{G})\times \spec k(x)$ in $X$ is $\eta_Y$. But then because $\rho(\gnor_{\bfg,p})$ is the generic point of $\spec k(\eta_{G})\times \spec k(x)$ we have the first claim.

To see that $\overline{\gnor}$ is $G(k)$-invariant notice that for any $g\in G(k)$, $g \ast \bfg=\bfg$. Thus by Proposition \ref{cd prop} part 2 we have that $g\ast (\bfg \ast p)=\bfg\ast p$. But $p\mapsto g\ast p$ is the map induced by left multiplication by $g$, which for points of the form $(\eta_Y,\gnor)$ is given by precomposing $\gnor$ with the left-multiplication by $g$.
\end{proof}

Furthermore we have the following.

\begin{prop}\label{infinite Prop} The image of $\bfp$ is exactly the set of points $(\eta_Y,\gnor)$, where $Y$ is a $G$-invariant subvariety of $X$ and $\gnor$ is $G(k)$-invariant.
\end{prop}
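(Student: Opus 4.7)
The plan is to use the retraction property of $\bfp$ from Corollary \ref{Retraction cor}: idempotency forces the image of $\bfp$ to equal its fixed-point set. Proposition \ref{imageofgstar Prop} already proves the image is contained in the claimed set, so it suffices to show that any point $q = (\eta_Y, \gnor)$ with $Y$ a $G$-invariant subvariety of $X$ and $\gnor$ a $G(k)$-invariant valuation on $k(Y)$ is actually fixed by $\bfp$. Applying Proposition \ref{imageofgstar Prop} to $q$ itself yields $\bfp(q) = (\eta_{Y'}, \overline{\gnor})$, where $Y'$ is the Zariski closure of the image of $G \times \spec k(\eta_Y) \to X$; this image lies in $Y$ by $G$-invariance and contains $\eta_Y$ (take $g = e$), so $Y' = Y$ by irreducibility. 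The content of the proposition therefore reduces to the identity $\overline{\gnor} = \gnor$ on $k(Y)$.

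To prove this identity I would compute $|f|_{\bfp(q)} = |m^*f|_{\bfg, q}$ for $f$ in a $G$-stable affine chart $k[U]$ containing $\eta_Y$, which exists since spherical varieties admit covers by $G$-stable affine open subsets; the case of general $f \in k(Y)$ then follows by multiplicativity, which holds because $\bfg$ is peaked (Proposition \ref{PeakedShilov Prop}). The subspace $W := \spn_k\{g \cdot f : g \in G(k)\} \subseteq k[U]$ is finite-dimensional by the local finiteness of algebraic $G$-module structures, and choosing a basis $\{b_1, \ldots, b_n\}$ of $W$ with $b_1 = f$ and using the coaction $W \to k[G] \otimes_k W$ produces a decomposition $m^*f = \sum_i a_i \otimes b_i$ in $k[G] \otimes_k k[U]$.

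For the upper bound, each $b_i \in W$ is a $k$-linear combination of $G(k)$-translates of $f$, so the ultrametric combined with the trivial valuation on $k$ and the $G(k)$-invariance of $\gnor$ yields $|b_i|_q \leq |f|_q$; the definition of the projective tensor seminorm therefore gives $|m^*f|_{\bfg, q} \leq \max_i |b_i|_q \leq |f|_q$. For the lower bound, any representation $m^*f = \sum_i a'_i \otimes b'_i$ specializes at $g = e$ to $f = \sum_i a'_i(e) b'_i$ in $k[U]$, and the ultrametric forces $|f|_q \leq \max_i |b'_i|_q$; taking the infimum over representations gives $|f|_q \leq |m^*f|_{\bfg, q}$. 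The main obstacle I anticipate is technical: reducing the computation to regular $f$ on a $G$-stable affine chart (which relies on the local structure of spherical varieties) and then checking that the seminorm $|\cdot|_{\bfg, q}$ restricted to the subring $k[G] \otimes_k k[U]$ of $\cH(\bfg) \hat\otimes_k \cH(q)$ really is computed by the projective tensor formula. Once these points are settled, the matching bounds give $|f|_{\bfp(q)} = |f|_q$, so $\overline{\gnor} = \gnor$ and $\bfp(q) = q$.
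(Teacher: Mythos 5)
Your overall strategy (image contained in the fixed-point set by Proposition \ref{imageofgstar Prop}, then show every point of the stated form is fixed) matches the paper's, but your method for proving fixedness is genuinely different. The paper argues topologically: for a point $p$ fixed by $G(k)$, the map $G^{\beth}\times\cM(\cH(p))\to X^{\an}$ sends all of $G^{\beth}(k)\times\cM(\cH(p))$ to the single point $p$, and a reduction-map/density lemma (extending \cite[Lemma 2.4]{Thuillier}) shows the unique Shilov boundary point of $G^{\beth}\times\cM(\cH(p))$ lies in the closure of that subset, so continuity forces $\bfp(p)=p$. You instead compute the tensor norm directly, in the spirit of the paper's Lemma \ref{EqualityofmyvalandTheirval Lem}. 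That route can be made to work, but as written it has two gaps.

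First, spherical varieties (and $G$-varieties generally) are \emph{not} covered by $G$-stable affine opens: in Example \ref{SL2 ex} the simple embedding $\bP^2\setminus O$ has no $G$-stable affine open meeting the closed orbit $L$, and the paper only ever produces $B$-stable affine charts (Theorem \ref{fundopen Thm}). Nor can you simply retreat to arbitrary $f\in k(Y)$, because $k(Y)$ need not be locally finite as a $G(k)$-module (already for $G=\bG_a$ acting on itself, the translates of $1/t$ span an infinite-dimensional space), so producing a locally finite subring of $k(Y)$ whose fraction field is $k(Y)$ — the input your coaction decomposition needs — requires an argument you have not supplied (e.g.\ via \cite[Lemma 1.4]{Knop} or by first replacing $X$ with the orbit closure). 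Second, the lower bound via specialization at $g=e$ does not apply to all representations: the infimum defining $\gnor_{\bfg,q}$ ranges over expressions $\sum_i a_i'\otimes b_i'$ in $\cH(\bfg)\otimes_k\cH(q)=k(G)\otimes_k\cH(q)$, and the $a_i'$ may have poles at $e$. The correct move, which is exactly the one made in Lemma \ref{EqualityofmyvalandTheirval Lem}, is to evaluate at a general $g\in G(k)$ where all the $a_i'$ are regular and then use the $G(k)$-invariance of $\gnor$ to replace $\abs{g\cdot f}_q$ by $\abs{f}_q$. With these two repairs your computation goes through; the paper's topological argument avoids both issues entirely, at the cost of invoking the density lemma for the Shilov boundary.
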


This proof is the same as the proof of \cite[Proposition 2.3 (ii)]{Thuillier}, which proves this fact for $G=T$ a torus. We include the argument, except for one lemma, for completeness.

\begin{proof} By Proposition \ref{imageofgstar Prop} we have one containment. For the other notice that points of the form $(\eta_Y,\gnor)$, where $Y$ is a $G$-invariant subvariety and $\gnor$ is $G(k)$-invariant, are exactly the points $(x,\gnor)\in X^{\an}$ fixed by the action of $G(k)$. Let $p=(x,\gnor)\in X^{\an}$ be a point fixed by the group action $G(k)$. The morphism:
\[
G^{\beth}\times \cM(\cH(p))\hookrightarrow G^{\beth}\times X^{\text{an}}\xrightarrow{m^{\an}} X^{\text{an}}
\]
maps the subset $G^{\beth}(k)\times \cM(\cH(p))$ onto $x$. The map is continuous and $x$ is closed as Berkovich spaces are locally hausdorff and thus T1, and so every point in the closure of $G^{\beth}(k)\times \cM(\cH(p))$ will have image $p$. Because $\bfp(p)$ is the image in $X^{\an}$ of the unique Shilov boundary point of $G^{\beth}\times \cM(\cH(p))$ under \[
G^{\beth}\times \cM(\cH(p))\hookrightarrow G^{\beth}\times X^{\text{an}}\xrightarrow{m^{\an}} X^{\text{an}}
\] we need to show that the unique point in the Shilov boundary of $G^{\beth}\times \cM(\cH(p))$ is in the closure of $G^{\beth}(k)\times \cM(\cH(p))$. The field $k$ is algebraically closed so $G(k)\times \spec \widetilde{\cH(p)}$ is a dense subset of $G\times \spec \widetilde{\cH(p)}$. The following lemma will complete the proof.
\end{proof}

\begin{lem}
Let $L/k$ be a nonarchimedean field extension, and let $\Sigma$ be a subset of $G^{\beth}\times_k \cM(L)$ with $r(\Sigma)$ dense in $G\times_k\spec(\tilde{L})$. Then the Shilov boundary is in the closure of $\Sigma$.
\end{lem}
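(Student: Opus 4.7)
The plan is to reduce the lemma to a density argument at the level of the reduction. By Proposition \ref{norm makes product prop}, we identify $G^{\beth}\times_k \cM(L) = \cM(\cA)$ with $\cA = \cO(G)\hat{\otimes}_k L$. Fix a $k$-basis $\{e_\alpha\}$ of $\cO(G)$; since $k$ is trivially valued, a direct calculation using the definition of the product seminorm shows that the Banach norm on $\cA$ is the Gauss norm, $\|\sum_\alpha c_\alpha e_\alpha\|_{\cA} = \max_\alpha |c_\alpha|_L$. Consequently $\widetilde{\cA} \cong \cO(G)\otimes_k \widetilde{L} = \cO(G_{\widetilde{L}})$. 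Because $G$ is connected and geometrically integral over the algebraically closed field $k$, the ring $\widetilde{\cA}$ is a domain with a unique generic point $\eta$. Proposition \ref{Shilovboundary and reduction prop} then identifies the Shilov boundary as the single point $\xi$ with $r(\xi)=\eta$; in particular $|f(\xi)|=\|f\|_{\cA}$ for all $f\in\cA$.

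Next, I would take a basic open neighborhood of $\xi$ of the form $U = \bigcap_{i=1}^n \{x : |f_i(x)| > a_i\}$ with $a_i < |f_i(\xi)|$. Using density of $\cO(G)\otimes_k L$ in $\cA$ and the $1$-Lipschitz bound $\bigl||f(x)|-|f'(x)|\bigr|\le \|f-f'\|_{\cA}$, I can shrink $U$ so that each $f_i \in \cO(G)\otimes_k L$ is a \emph{finite} sum $\sum_\alpha c_{i,\alpha} e_\alpha$. Then the Gauss norm $|f_i(\xi)| = \max_\alpha |c_{i,\alpha}|_L$ is attained on some coefficient $c_i \in L^\times$. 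Replacing $f_i$ by $f_i/c_i$ and $a_i$ by $a_i/|c_i|_L$ rewrites $U$ in the same form, but with $f_i \in \cA^{\circ}$, $\|f_i\|_{\cA}=1$, $a_i < 1$, and $\widetilde{f}_i \in \widetilde{\cA}$ nonzero.

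Finally, since $\widetilde{\cA}$ is a domain, $D(\widetilde{f}_1\cdots \widetilde{f}_n) \subset \spec \widetilde{\cA}$ is a nonempty Zariski open subset. The density hypothesis produces $y \in \Sigma$ with $r(y) \in D(\widetilde{f}_1\cdots \widetilde{f}_n)$, i.e., each $\widetilde{f}_i$ is nonzero at $r(y)$. Unwinding the definition of the reduction map, this is equivalent to $h_y(\widetilde{f}_i)\neq 0$ in $\widetilde{\cH(y)}$, which in turn is equivalent to $|f_i(y)| = \|f_i\|_{\cA} = 1 > a_i$ for every $i$. Hence $y \in U \cap \Sigma$ and $\xi \in \overline{\Sigma}$.

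The main obstacle is the normalization step: one cannot in general rescale an arbitrary element of $\cA$ by a scalar of $L^\times$ to achieve Banach norm $1$, because the Gauss norm need not lie in $|L^\times|$. Reducing to finite-sum representatives in $\cO(G)\otimes_k L$ is what resolves this, since then the Gauss norm is an attained maximum of finitely many values of $|\cdot|_L$ and hence divisible out by a coefficient.
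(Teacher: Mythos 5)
Your proof is correct, and it is essentially the argument the paper delegates to: the paper's proof of this lemma simply cites Thuillier's Lemma 2.4 (the torus case) and asserts the same proof works for any connected linear algebraic group $G$. The Gauss-norm identification, the computation $\widetilde{\cA}\cong \cO(G)\otimes_k\widetilde{L}$, and the reduction/density argument you write out are exactly Thuillier's, correctly adapted to general $G$ via the observation that $\cO(G)\otimes_k\widetilde{L}$ is still a domain because $G$ is geometrically integral over the algebraically closed field $k$.
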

\begin{proof}
The case when $G$ is a torus is proved in \cite[lemma 2.4]{Thuillier}, and the proof proceeds the same for any connected linear algebraic group $G$.
\end{proof}

\subsection{The action of \texorpdfstring{$\bfp$}{p} on homogeneous spaces}

If $G$ is reductive and $X$ is a spherical $G$-variety, then any $G$-invariant subvariety is in fact the closure of the $G$-orbit of some closed point $y\in X$ (\cite{Knop}). Thus we will restrict our attention to when $G$ acts on an orbit. In this case we will give a formula for $\bfp$ in terms of valuations that will immediately relate it to the tropicalization map. We begin by recalling the construction of Luna and Vust which is used in \cite{TV} to define tropicaliziation.

Let $\overline{K}=\ps$ denote Puiseux series over $k$, equipped with the $u$-adic valuation $\gnor_u$. In the case that $G/H$ is spherical for any $\overline{K}$-point $x$, Luna and Vust define a $G(k)$-invariant valuation $\gnor_x$ (\cite[Section 4]{LV}). For any $f\in k(G/H)$ there is a nonempty open subset of $U\subseteq G(k)$ such that if $g\in U$ then $gf(x)$ is well defined. Luna and Vust show that for any $f\in k(G/H)$ there is a dense open $U_f\subseteq G(k)$ where $\abs{gf(x)}_u$ is constant as $g$ varies over $U_f$. The valuation $\gnor_x$ is given by $\abs{f}_x=\abs{gf(x)}_u$ for $g\in U_f$.

We will now explain how this generalizes to other valued extensions $L/k$. Let $x\colon \spec L \rightarrow G$ and denote the valuation on $L$ by $\gnor$. The proof of the following lemma uses the same technique as the proof of \cite[Lemma 1.4]{Knop}.
\begin{lem}\label{OpenWhereMinIsAttained Lem}
For each $f\in k(G)$ there is a nonempty Zariski open $U_f \subseteq U$ such that for any $g\in U_f$, $\abs{gf(x)}$ is constant. For $f\in k[G]$ we have $\abs{gf(x)}=\max_{h\in G(k)}\abs{hf(x)}$ when $g\in U_f$.
\end{lem}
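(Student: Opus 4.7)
The plan is to mirror the strategy of \cite[Lemma 1.4]{Knop}: encode the map $g\mapsto gf(x)$ as a single rational function $\Phi$ on $G_L$, decompose it with respect to an adapted $k$-basis of its coefficients in $L$, and read off the claim.

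First I would construct $\Phi\in L(G)=L\otimes_k k(G)$ as the pullback of $f$ along the translation-by-$x$ map $G_L\to G_L$, so that $\Phi(g)=gf(x)$ whenever both sides are defined. Writing $\Phi=\sum_i a_i\phi_i$ with $a_i\in L$ and $\phi_i\in k(G)$, after regrouping we may assume the $a_i$ are $k$-linearly independent.

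The heart of the argument is to replace $(a_i)$ by an adapted basis of $W:=\mathrm{span}_k(a_i)\subseteq L$. I would prove the following non-archimedean linear-algebra lemma: any finite-dimensional $k$-subspace $W$ of a valued extension $L/k$ (with $k$ trivially valued) admits a $k$-basis $(e_1,\ldots,e_n)$ with $\abs{e_1}>\cdots>\abs{e_n}$ such that $\abs{\sum_j c_j e_j}=\max_{c_j\neq 0}\abs{e_j}$ for every $c_j\in k$. The proof is to filter $W$ by the finitely many subspaces $W^{\leq r}=\{w\in W:\abs{w}\leq r\}$ as $r$ ranges over the finite set of absolute values attained on $W\setminus\{0\}$, and to choose lifts of $k$-bases of the successive graded pieces. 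Rewriting $\Phi$ in this basis gives $\Phi=\sum_j e_j\psi_j$ with $\psi_j\in k(G)$.

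Let $j_0=\min\{j:\psi_j\not\equiv 0\}$ and take $U_f$ to be the intersection of the domain of $\Phi$ with the nonvanishing locus of $\psi_{j_0}$; this is a nonempty Zariski open inside $U$. For $g\in U_f(k)$ all values $\psi_j(g)$ lie in $k$ with $\psi_{j_0}(g)\neq 0$ and $\psi_j(g)=0$ for $j<j_0$, so the adapted-basis property yields $\abs{\Phi(g)}=\abs{e_{j_0}}$, independent of $g$. When $f\in k[G]$ the function $\Phi$ lies in $L[G]=L\otimes_k k[G]$ so we may take each $\psi_j\in k[G]$; then for every $h\in G(k)$ one obtains $\abs{hf(x)}=\max_{\psi_j(h)\neq 0}\abs{e_j}\leq \abs{e_{j_0}}$, with equality exactly when $h\in U_f$.

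The main obstacle I expect is the adapted-basis lemma and its compatibility with the decomposition of $\Phi$: one must check that the filtration is finite (which uses finite-dimensionality of $W$ together with the fact that $k$ is trivially valued, so the value group of $W$ is finite) and that the change-of-basis preserves $\psi_j\in k(G)$ respectively $k[G]$, which is automatic because the change-of-basis matrix has entries in $k$. Once this linear-algebra step is in place, the rest of the proof is a direct reading of the formula.
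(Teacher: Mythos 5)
Your argument is correct, and both proofs are ultimately the technique of \cite[Lemma 1.4]{Knop}, but the packaging is genuinely different. The paper's proof is three lines: it takes $M$ to be the finite-dimensional $G$-module generated by $f$, with basis $g_1f,\ldots,g_rf$, sets $v=\max_i\abs{g_if(x)}$, observes that $V(v)\cap M=\{f'\in M \mid \abs{f'(x)}<v\}$ is a proper $k$-linear subspace (this is where the trivial valuation on $k$ and the ultrametric inequality enter), and takes $U_f=\{g\mid gf\notin V(v)\cap M\}$, the complement of the preimage of that subspace under $g\mapsto gf$. You instead transport everything into $L$ via the comultiplication decomposition $\Phi=\sum_i a_i\phi_i$ and prove an exact cartesian-basis lemma for $W=\mathrm{span}_k(a_i)\subseteq L$. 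These are the same mechanism viewed from the two ends of the tensor $m^{\ast}f$: your filtration $W^{\leq r}$ is the paper's sublevel subspace $V(v)\cap M$ pushed forward along evaluation at $x$, and your finiteness argument for the value set (a chain of subspaces of $W$ has bounded length) is the same finite-dimensionality input. What your route costs is the adapted-basis lemma; what it buys is a sharper tool: your exact ($\alpha=1$) cartesian bases over a trivially valued $k$ would let one dispense with the $\alpha>1$ limiting argument in the paper's Lemma \ref{EqualityofmyvalandTheirval Lem}, which only invokes $\alpha$-cartesian bases from \cite{BGR}.

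Three slips to repair, none fatal. The chain $\abs{e_1}>\cdots>\abs{e_n}$ cannot be strict in general, since the graded pieces of your filtration may have $k$-dimension greater than one; weak monotonicity is what your argument actually uses. The identification $L(G)=L\otimes_k k(G)$ is false ($L(G_L)$ is the fraction field of $L\otimes_k k[G]$ and is strictly larger), so for $f\in k(G)$ you should first reduce to $f\in k[G]$ by writing $f=f_1/f_2$ and setting $U_f=U_{f_1}\cap U_{f_2}\cap U$, exactly as the paper does in its opening sentence. Finally, equality $\abs{hf(x)}=\abs{e_{j_0}}$ need not hold \emph{exactly} for $h\in U_f$: it can also hold when some other $\psi_j$ with $\abs{e_j}=\abs{e_{j_0}}$ is nonvanishing at $h$; the lemma only requires equality \emph{for} $h\in U_f$, which you do establish.
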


\begin{proof}
Without loss of generality we can assume $f\in k[G]$. Let $M$ be the $G$-module generated by $f$, this is a finite dimensional vector space and thus has basis $g_1f,\ldots g_rf $. Let $v=\max_{i}\abs{g_if(x)}$. Let $V(v)=\{f'\in k[G]\, | \, \abs{f'(x)}<v\}$. Then set $U_f=\{g\in G(k)\, | \, gf\notin V(v)\cap M\}$.
\end{proof}

Let $\gnor_x:k(G)\rightarrow (0,\infty)$ be given by:
\begin{equation}\label{TheVal Eqn}
f \mapsto \abs{gf(x)} \quad g\in U_f.
\end{equation}

\begin{cor}\label{subscript val existence cor}
The function $\gnor_x$ defines a $G(k)$-invariant valuation on $k(G)$.
\end{cor}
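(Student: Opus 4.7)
The plan is to verify the valuation axioms one at a time, and then separately establish $G(k)$-invariance. The key enabling fact is that $G$ is connected and $k$ is algebraically closed, so $G(k)$ is Zariski dense and irreducible, and therefore any finite intersection of nonempty Zariski open subsets of $G(k)$ is nonempty. I will use this repeatedly to pick a single generic group element that witnesses several conditions simultaneously.

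Well-definedness of $|f|_x$ (independence of the choice of $g\in U_f$) is precisely the content of Lemma \ref{OpenWhereMinIsAttained Lem}. The conditions $|1|_x=1$ and $|\lambda f|_x=|\lambda||f|_x$ for $\lambda\in k$ are immediate, and I adopt the convention $|0|_x=0$. For multiplicativity, given $f_1,f_2\in k(G)^\times$, I would choose $g\in U_{f_1}\cap U_{f_2}\cap U_{f_1f_2}$. Since $g$ acts on $k(G)$ by a field automorphism, $g(f_1f_2)=(gf_1)(gf_2)$, and therefore
\[
|f_1f_2|_x = |g(f_1f_2)(x)| = |gf_1(x)|\cdot|gf_2(x)| = |f_1|_x\cdot|f_2|_x,
\]
using that $\gnor$ is multiplicative on $L$. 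The ultrametric inequality follows by the same pattern, this time choosing $g\in U_{f_1}\cap U_{f_2}\cap U_{f_1+f_2}$ and invoking the ultrametric inequality on $L$. Nondegeneracy ($|f|_x\neq 0$ when $f\neq 0$) comes from shrinking $U_f$ to the nonempty Zariski open on which $gf(x)\neq 0$; this uses the same style of orbit argument as in \cite[Section 4]{LV}.

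For $G(k)$-invariance, fix $h\in G(k)$ and $f\in k(G)^\times$. The right translate $U_f\cdot h^{-1}$ is a nonempty Zariski open of $G(k)$, so it meets the nonempty open $U_{hf}$; picking $g$ in the intersection, I have $gh\in U_f$, so $|(gh)f(x)|=|f|_x$, while $g\in U_{hf}$ gives $|g(hf)(x)|=|hf|_x$. The identity $(gh)f = g(hf)$ (both equal $y\mapsto f(h^{-1}g^{-1}y)$ under the left action) then forces $|hf|_x=|f|_x$. The main obstacle throughout is mere bookkeeping: keeping track of the various $U$-sets and invoking the connectedness of $G$ to ensure their intersections are nonempty. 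No deeper input is required beyond Lemma \ref{OpenWhereMinIsAttained Lem} and the fact that $\gnor$ is a valuation on $L$.
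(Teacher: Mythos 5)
Your proof is correct and fills in exactly the argument the paper leaves implicit (the statement is presented as a corollary of Lemma \ref{OpenWhereMinIsAttained Lem} with no written proof). The mechanism you use—intersecting the finitely many relevant opens $U_f$, which is possible because $G$ is connected and $k$ is algebraically closed so $G$ is irreducible and $G(k)$ is dense—is the intended one, and your treatment of $G(k)$-invariance via $g\in U_{hf}\cap U_f h^{-1}$ together with $(gh)f=g(hf)$ is the natural way to deduce it.
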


\begin{lem}\label{EqualityofmyvalandTheirval Lem}
Let $x:\spec L \rightarrow G$ be as above and let $p=(x,\gnor)$ be the corresponding point in $G^{\an}$. Then if $\bfp(p)=(\eta_G,\overline{\gnor})$, we have that $\overline{\gnor}=\gnor_x$.
\end{lem}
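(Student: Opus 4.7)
The plan is to unfold the definition of $\ast$-multiplication, compute $\overline{\gnor}$ explicitly on $k[G]$, and match the answer with the Luna-Vust formula $\abs{f}_x=\abs{gf(x)}$ for $g\in U_f$. Since $\overline{\gnor}$ and $\gnor_x$ are both valuations on $k(G)$ extending the trivial valuation on $k$, it suffices to verify the identity on $k[G]$. Let $p=(x,\gnor)$ with $\gnor$ the restriction to $k(x)$ of the valuation on $L$. By construction of $\ast$-multiplication, for $f\in k[G]$ the value of $\overline{\gnor}$ at $f$ equals $\abs{m^*f}_{\bfg,p}$, the norm of $m^*f\in k[G]\otimes_k k[G]$ computed in the completed tensor product $\cH(\bfg)\hat{\otimes}_k\cH(p)=k(G)\hat{\otimes}_k\cH(p)$, with $k(G)$ carrying the trivial valuation.

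To evaluate $\abs{m^*f}_{\bfg,p}$, as in the proof of Lemma \ref{OpenWhereMinIsAttained Lem} let $M\subseteq k[G]$ be the finite-dimensional $G$-submodule generated by $f$ under left translation, with $k$-basis $h_1,\ldots,h_r$. Write uniquely $m^*f=\sum_{i=1}^r g_i\otimes h_i$ with $g_i\in k[G]$, so that $f(gy)=\sum_i g_i(g)h_i(y)$. The tensor rank of $m^*f$ equals $\dim M=r$ (the first-factor evaluations of $m^*f$ sweep out precisely $M$), so this $r$-term decomposition is minimal and the $g_i$ are automatically $k$-linearly independent. Extending $(g_i)$ to a $k$-basis of $k(G)$ and rewriting any competing decomposition in that basis, the Gauss extension of the trivial valuation shows the infimum defining the completed tensor product norm is realized on this decomposition, yielding
\[
    \abs{m^*f}_{\bfg,p}=\max_{1\leq i\leq r}\abs{h_i(x)}
\]
after pulling back along $x\colon\spec L\to G$.

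It remains to identify $\max_i\abs{h_i(x)}$ with the Luna-Vust expression. For any $h\in G(k)$, write $hf=\sum_i\alpha_i(h)h_i$ with $\alpha_i(h)\in k$; since $k$ is trivially valued, the ultrametric inequality gives $\abs{hf(x)}\leq\max_i\abs{h_i(x)}$. Conversely, because $M$ is spanned by the translates $\{hf:h\in G(k)\}$, each $h_i$ is a $k$-linear combination of such translates, whence $\abs{h_i(x)}\leq v:=\sup_{h\in G(k)}\abs{hf(x)}$. Combining these inequalities gives $v=\max_i\abs{h_i(x)}$, and Lemma \ref{OpenWhereMinIsAttained Lem} identifies $v$ with $\abs{gf(x)}$ for $g\in U_f$, that is, with $\abs{f}_x$. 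Hence the value of $\overline{\gnor}$ at $f$ equals $\abs{f}_x$ for every $f\in k[G]$, so $\overline{\gnor}=\gnor_x$. The main obstacle is justifying the Gauss-type computation of the completed tensor product norm when one factor is trivially valued; once that step is in place, the agreement with the Luna-Vust formula is an immediate consequence of the matrix-coefficient bookkeeping and Lemma \ref{OpenWhereMinIsAttained Lem}.
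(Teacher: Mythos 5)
Your proof is correct, and it reorganizes the key computation in a genuinely different way from the paper. Both arguments reduce to evaluating $\abs{m_x^{*}f}_{\bfg,p}$ for $f\in k[G]$ and matching it with $\abs{gf(x)}$ for $g\in U_f$. The paper proves the two inequalities separately: the bound $\abs{f}_x\leq \overline{\abs{f}}$ comes from evaluating an arbitrary decomposition at a general $g\in G(k)$, while the reverse bound requires exhibiting one decomposition attaining the infimum, which the paper does by taking an $\alpha$-Cartesian basis (via \cite[2.6.2, Proposition 3]{BGR}) of the $k$-span of the $f_i(x)$ inside $L$ and letting $\alpha\to 1$. You instead orthogonalize on the \emph{other} tensor factor: since $\cH(\bfg)=k(G)$ carries the trivial valuation, every $k$-basis of $k(G)$ is exactly Cartesian, so the norm of $\sum_i g_i\otimes h_i(x)$ with $g_i$ linearly independent is exactly $\max_i\abs{h_i(x)}$, with no approximation or limit. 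The ``main obstacle'' you flag is real but is only a two-line verification: extend $(g_i)$ to a $k$-basis $\{e_\alpha\}$ of $k(G)$; for any competing decomposition $\sum_j a_j\otimes b_j$, writing $a_j=\sum_\alpha\lambda_{j\alpha}e_\alpha$ forces the basis coefficients to be $c_\alpha=\sum_j\lambda_{j\alpha}b_j$, and $\abs{\lambda_{j\alpha}}_k\leq 1$ gives $\abs{c_\alpha}_p\leq\max_j\abs{a_j}_{\bfg}\abs{b_j}_p$, so the basis decomposition realizes the infimum. Your final identification $\max_i\abs{h_i(x)}=\max_{h\in G(k)}\abs{hf(x)}=\abs{f}_x$ leans on the second assertion of Lemma \ref{OpenWhereMinIsAttained Lem} together with the fact that the translates of $f$ span $M$, whereas the paper only uses the existence of $U_f$ and shrinks it to avoid a vanishing locus. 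What your route buys is the elimination of the BGR citation and the limiting argument, and it treats arbitrary decompositions in $k(G)\otimes_k\cH(p)$ uniformly (the paper's upper-bound step, as written, only inspects decompositions with factors in $k[G]\otimes_k k(x)$); what the paper's route buys is a template that still works when the orthogonalizing factor is not trivially valued.
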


\begin{proof}
We will show that $\gnor_x$ and $\overline{\gnor}$ agree for all regular functions $f\in k[G]$. Let $m_x$ be the composition $G\times \spec k(x)\hookrightarrow G\times G \xrightarrow{m} G $. Then if $m_x^*f =\sum_i g_i\otimes f_i(x)\in k[G]\otimes k(x)$ we have $\abs{f}_x=\abs{gf(x)}=\abs{\sum_i g_i(g)\otimes f_i(x)}\leq \max_i(\abs{f_i(x))})$, where $g\in U_f$, $U_f$ the open subset of $G(k)$ from Lemma \ref{OpenWhereMinIsAttained Lem}. But then $\overline{\abs{f}}=\abs{m_x^*f}_{\bfg,p}$ and
\[
\abs{m_x^*f}_{\bfg,p}=\inf\left\{\max_i\abs{g_i}_{\bfg} \abs{f_i(x)}_{p}\,\, \bigg| \,\, \sum\limits_ig_i\otimes f_i(x)=m_x^*f\right\}.
\] Because $\gnor_{\bfg}$ is trivial we have that $\abs{f}_x\leq \overline{\abs{f}}$.

Conversely, to show that $\abs{f}_x\geq \overline{\abs{f}}$ it suffices to show that there is one expression $m_x^*f =\sum_i g_i\otimes f_i(x)$ such that $\abs{f}_x=\max_i\abs{f_i(x)} $. Let $m_x^*f =\sum_i g_i\otimes f_i(x)$, by manipulating the sum we can assume all the $g_i$ and $f_i(x)$ are nonzero and the functions $g_i$ are $k$-linearly independent. We can also assume that $\max_i(\abs{f_i(x)})=\abs{f_1(x)}$. Let $V$ be the $k$-span in $L$ of $f_i(x)$. Fix $\alpha>1$. It follows from \cite[Subsection 2.6.2, Proposition 3]{BGR}\footnote{While \cite{BGR} works with $k$ nontrivially valued, this result holds for trivially valued $k$ as well, the proof is effectively the same. Such bases are called $\alpha$-Cartesian bases.} there is a $k$-basis of $V$, $v_1,\ldots v_r$ such that $v_1$ is a multiple of $f_1(x)$, and for any $\lambda_1,\ldots \lambda_r\in k$
\[
\abs{\sum\limits_j \lambda_j v_j}\geq \alpha^{-1}\max_j(\abs{v_j}).
\]
Then we have for each $i$, $f_i(x)=\sum_j\lambda^i_jv_j$. So then $m_x^*f=\sum_j(\sum\limits \lambda^i_j g_i)\otimes v_j$, by assumption for each $j$, the function $\sum_i \lambda^i_j g_i\in k[G]$ is not identically 0. Take $g\in U_f$ and such that $ \sum_i \lambda^i_1 g_i(g)\neq 0$, which exists because $U_f$ is nonempty and open and so is the set of points such that $ \sum_i \lambda^i_1 g_i(g)\neq 0$. Now we have that:
\[
\abs{f}_x=\abs{\sum\limits_j(\sum\limits_i \lambda^i_j g_i(g))\otimes v_j}\geq \alpha^{-1}\max_j(\abs{v_j})=\alpha^{-1}\abs{f_1(x)}.
\]
But this holds for all $\alpha>1$. So $\abs{f}_x=\abs{f_1(x)}=\max_i(\abs{f_i(x))})$. So $\abs{f}_x\geq \overline{\abs{f}}$, and thus the valuations are equal on $k[G]$, and thus are equal on all of $k(G)$.
\end{proof}

Let $Y$ be the $G$-orbit in $X$ of some closed point $y$. The morphism $G\rightarrow Y$ given by $g\mapsto gy$ defines an inclusion $k(Y)\rightarrow k(G)$. Given some valued extension $L/k$ and $x\in G(L)$, one can restrict $\gnor_x$ to $k(Y)$. Let $x'$ be the image of $x$ under the map $G(L)\rightarrow Y(L)$. Then $\gnor_x$ is in fact given by
\begin{equation}\label{subscript val on orbit eqn}
\abs{f}_x=\abs{gf(x')} \quad g\in U_f.
\end{equation}
We denote the restriction of $\gnor_x$ to $k(Y)$ by $\gnor_{x'}$.

\begin{cor}\label{orbit valuations cor}
The map $\bfp\colon  Y^{\an}\rightarrow Y^{\an}$ is given by $(x,\gnor)\mapsto (\eta_Y,\gnor_{x'})$. Furthermore all the $G(k)$-invariant valuations on $k(Y)$ have the form $\gnor_{x'}$ for some $x'\in Y$.
\end{cor}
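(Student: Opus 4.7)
The plan is to reduce both assertions to the analogous statement on $G$ already established in Lemma \ref{EqualityofmyvalandTheirval Lem}, by exploiting the $G$-equivariance of the orbit map $\psi\colon G\to Y$, $g\mapsto gy$. Since $\psi$ intertwines left multiplication on $G$ with the $G$-action on $Y$, the resulting square of action maps falls under the hypotheses of Proposition \ref{cd prop} part 3 with $\phi=\mathrm{id}_G$, yielding the intertwining relation $\psi^{\an}\circ\bfp = \bfp\circ\psi^{\an}$.

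For the first assertion, I would lift any $(x',\gnor)\in Y^{\an}$ to a point $(x,\nu)\in G^{\an}$ with $\psi^{\an}((x,\nu))=(x',\gnor)$. Surjectivity of $\psi$ on all scheme points of $Y$ provides some $x\in G$ with $\psi(x)=x'$, and general valuation theory lets us extend $\gnor$ from $k(x')$ to a valuation $\nu$ on $k(x)$. Lemma \ref{EqualityofmyvalandTheirval Lem} then identifies $\bfp((x,\nu))=(\eta_G,\gnor_x)$, and pushing this through $\psi^{\an}$ via the intertwining relation gives
\[
\bfp((x',\gnor)) = \psi^{\an}(\bfp((x,\nu))) = \psi^{\an}((\eta_G,\gnor_x)) = (\eta_Y,\gnor_x|_{k(Y)}) = (\eta_Y,\gnor_{x'}),
\]
where $\psi(\eta_G)=\eta_Y$ because $\psi$ is dominant and the final identification is the definition of $\gnor_{x'}$ from \eqref{subscript val on orbit eqn}.

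For the second assertion, let $\nu$ be any $G(k)$-invariant valuation on $k(Y)$. The pair $(\eta_Y,\nu)$ is then a point of $Y^{\an}$, and since the only nonempty $G$-invariant subvariety of the orbit $Y$ is $Y$ itself, Proposition \ref{infinite Prop} combined with Corollary \ref{Idpotentcy Cor} shows that $\bfp((\eta_Y,\nu))=(\eta_Y,\nu)$. Applying the first assertion to $\eta_Y\in Y(k(Y))$ (viewed via the inclusion $\spec k(Y)\hookrightarrow Y$) equipped with valuation $\nu$ then yields $\nu = \gnor_{\eta_Y}$, exhibiting $\nu$ in the required form. The main obstacle is the lifting step in the first assertion: one must confirm both that $\psi$ is surjective on all scheme points (a consequence of surjectivity on closed points plus Chevalley's theorem) and that valuations extend across field extensions (a classical fact from valuation theory). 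Once these are in hand, the rest of the argument is purely formal.
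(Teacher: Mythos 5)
Your first assertion is handled essentially the same way as the paper: both proofs set up the commutative square of action maps intertwined by the orbit map $\psi\colon G\to Y$, invoke Proposition~\ref{cd prop} part~3, and push the formula from Lemma~\ref{EqualityofmyvalandTheirval Lem} through $\psi^{\an}$; your extra remarks on Chevalley's theorem and extension of valuations just unpack the surjectivity of $G^{\an}\to Y^{\an}$ that the paper asserts directly.

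For the second assertion, however, you take a genuinely different route. The paper cites \cite[Corollary~1.5]{Knop} to lift any $G(k)$-invariant valuation on $k(Y)$ to one on $k(G)$, then applies Proposition~\ref{infinite Prop} on $G$ to realize the lift as some $\gnor_x$, and restricts. You instead work entirely on $Y$: you observe that $(\eta_Y,\nu)$ is a $G(k)$-fixed point of $Y^{\an}$, apply Proposition~\ref{infinite Prop} to $Y$ itself (using that the orbit has no proper $G$-invariant subvarieties) together with idempotency to get $\bfp((\eta_Y,\nu))=(\eta_Y,\nu)$, and then read off from the first assertion that $\nu=\gnor_{\eta_Y}$, where $\eta_Y$ is regarded as an $\cH(\eta_Y,\nu)$-point of $Y$. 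This is correct and has the advantage of staying entirely within the Berkovich-side machinery already established in Section~3, avoiding the extra algebraic input from Knop. One can even note that for $G(k)$-invariant $\nu$ the equality $\nu=\gnor_{\eta_Y}$ follows nearly directly from formula~\eqref{subscript val on orbit eqn}, since $G(k)$-invariance makes the ``generic $g$'' condition vacuous; but routing through Proposition~\ref{infinite Prop} and idempotency as you do is also perfectly sound. The one small technical point worth flagging: to apply \eqref{subscript val on orbit eqn} at $\eta_Y$ you should either note that the formula depends only on the image point in $Y$ (not on a chosen lift to $G$), or pass to an algebraically closed valued extension of $\cH(\eta_Y,\nu)$ so that the lift to $G$ exists.
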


\begin{proof}
We have a surjection of sets $G^{\an}\rightarrow Y^{\an}$ and a commutative diagram of maps of analytic spaces:
\[ 
\begin{tikzcd}
G^{\an} \times G^{\an} \arrow{d}{} \arrow{r}{} & G^{\an} \arrow{d}{} \\
G^{\an} \times Y^{\an} \arrow{r}{} & Y^{\an}
\end{tikzcd}
\]
Thus by Proposition \ref{cd prop} part 3 we see that $\bfp((x,\gnor))=(\eta_Y,\gnor_{x'})$. 

To see that this is all $G(k)$-invariant valuations on $k(Y)$, notice that it follows from Proposition \ref{infinite Prop} that all $G(k)$-invariant valuations on $k(G)$ are of the form $\gnor_x$ for some $x\in X(L)$ and it follows from \cite[Corollary 1.5]{Knop} that any $G(k)$-invariant valuation on $k(Y)$ is the restriction of one on $k(G)$.
\end{proof}

\subsection{A strong deformation retraction when $G$ is reductive}
Assume $G$ is a reductive group. We construct a strong deformation retraction from $X^{\an}$ onto $\bfp(X^{\an})$, using the same method as the proof of \cite[Theorem 6.2.1]{BerkBook}. Fix a maximal torus $T$ in $G$, Borel subgroup $B_+$ containing $T$, and let $B_-$ be the opposite Borel i.e. the unique Borel subgroup of $G$ such that $B_+\cap B_-=T$. Let $U_+$ be the unipotent radical of $B_+$, and let $U_-$ be the unipotent radical of $B_-$. Then the map:
\[
U_-\times T\times U_+\rightarrow G
\]
given by the multiplication map, is in fact an open immersion \cite[Exposé XXII]{SGA3}. Denote the image by $\Omega$. The groups $U_-$ and $U_+$ are isomorphic affine $N$-space for some $N$. Let $U_+=\spec k[u_1,\ldots u_N]$, and $U_-=\spec k[u_{-1},\ldots u_{-N}]$, and let $T=\spec (k[t_1^{\pm1},\ldots t_n^{\pm 1}) ]$. Then the coordinate ring of $\Omega$, $k[\Omega]$, is given by $\spec k[t_i^{\pm 1},u_j]$. Then set $T_i=t_i-1$. For each $\lambda\in [0,1]$ there is a point $\bfg_{\lambda}\in k[\Omega]^{\beth}$ given by the semivaluation $\gnor_{\bfg_{\lambda}}$, which is defined by:
\[
\abs{\sum_{I,J} \alpha_{I,J}T^I u^J}_{\bfg_{\lambda}}=\min_{I,J}\abs{\alpha_{I,J}}\lambda^{|I|+|J|}
\]
where $I$ and $J$ denote multi-indices.Then $\bfg_1=\bfg$ and $\bfg_0=e$, the identity of $G$. Thus we have a map of sets:
\[
H\colon [0,1]\times X^{\an}\rightarrow X^{\an}
\]
 $(\lambda,p)\mapsto \bfg_{\lambda}\ast p$. Furthermore $H(0,p)=p$ and $H(1,p)=\bfp(X^{\an})$. Notice that because $\bfg_{\lambda}\in X^{\beth}$ for any $\lambda \in [0,1]$, we can restrict $H$ to a map $[0,1]\times X^{\beth}\rightarrow X^{\beth}$, with $H(0,p)=p$ and $H(1,p)=\bfp(p)$.
\begin{prop}\label{def retract prop}
The map $H$ is a homotopy, and in fact a strong deformation retraction of $X^{\an}$ onto $\bfp(X^{\an})$. Similarly $H$ defines a strong deformation retraction from $X^{\beth}$ onto $\bfp(X^{\beth})$.
\end{prop}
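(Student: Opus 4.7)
The plan is to verify the defining properties of a strong deformation retraction for $H$: (i) continuity of $H$, (ii) $H(0, p) = p$ and $H(1, p) = \bfp(p)$, (iii) $H(\lambda, q) = q$ for every $q \in \bfp(X^{\an})$ and $\lambda \in [0, 1]$, and then show these survive restriction to $X^{\beth}$. I would begin with (i) by showing the path $\lambda \mapsto \bfg_\lambda$ is continuous as a map $[0, 1] \to G^{\beth}$. Since the topology on $G^{\beth}$ is generated by the evaluation maps $\ev_f$, this reduces to continuity of $\lambda \mapsto \abs{f}_{\bfg_\lambda} = \max_{\alpha_{I,J}\neq 0} \lambda^{|I|+|J|}$ for each $f = \sum \alpha_{I,J}T^I u^J \in k[\Omega]$, which is immediate. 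To upgrade this to joint continuity of $(\lambda, p) \mapsto \bfg_\lambda \ast p$, I would follow the argument of \cite[Theorem 6.2.1]{BerkBook}, using that the $\bfg_\lambda$ form a continuous family of peaked points in $G^{\an}$ (Proposition \ref{PeakedShilov Prop}) together with continuity of $m^{\an}$.

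For (ii), evaluating the defining formula gives $\abs{f}_{\bfg_0} = \abs{f(e)}$, identifying $\bfg_0$ with the $k$-point $e \in G(k)$; hence $H(0, p) = e \ast p = p$ by the example noting that $\ast$-multiplication by $k$-points coincides with ordinary left-multiplication, while $H(1, p) = \bfg \ast p = \bfp(p)$ by definition.

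The main obstacle is (iii). By Proposition \ref{infinite Prop} any $q \in \bfp(X^{\an})$ has the form $q = (\eta_Y, \gnor)$ with $Y \subseteq X$ a $G$-invariant subvariety and $\gnor$ a $G(k)$-invariant valuation on $k(Y)$. For $f \in k[Y]$ I would expand $m^{\ast}f = \sum_i g_i \otimes f_i$ in a basis $f_1, \ldots, f_r$ of the finite-dimensional $G$-submodule of $k[Y]$ generated by $f$. Choosing this basis via the $\alpha$-Cartesian construction used in the proof of Lemma \ref{EqualityofmyvalandTheirval Lem}, one arranges that $\abs{f}_q = \abs{f_1}_q$ is the maximum and that the coefficient $g_1 \in k[G]$ is regular and nonvanishing at the identity $e \in G(k)$. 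Since $\bfg_\lambda \in G^{\beth}$ reduces to $e$ for $\lambda < 1$, we then have $\abs{g_1}_{\bfg_\lambda} = 1$, while $\abs{g_i}_{\bfg_\lambda} \leq 1$ for all $i$; combining these with the $\alpha$-Cartesian estimate $\abs{\sum_i \mu_i f_i}_q \geq \alpha^{-1}\max_i \abs{\mu_i}\abs{f_i}_q$ pinches the norm $\abs{m^{\ast}f}_{\bfg_\lambda, q}$ between $\alpha^{-1}\abs{f}_q$ and $\abs{f}_q$, and letting $\alpha \to 1$ gives $\abs{f}_{\bfg_\lambda \ast q} = \abs{f}_q$. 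The identity extends from $k[Y]$ to $k(Y)$ by multiplicativity. The limiting case $\lambda = 1$ is covered by idempotency of $\bfp$ (Corollary \ref{Idpotentcy Cor}).

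Finally, because $\bfg_\lambda \in G^{\beth}$ and the reduction map is functorial under $\ast$-multiplication, $\bfg_\lambda \ast p$ lies in $X^{\beth}$ whenever $p \in X^{\beth}$, so the restriction of $H$ to $[0, 1] \times X^{\beth}$ furnishes a strong deformation retraction of $X^{\beth}$ onto $\bfp(X^{\beth})$.
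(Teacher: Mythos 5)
Your proposal is correct in outline but takes a genuinely different route from the paper on the one nontrivial point, namely that $H(\lambda, q) = q$ for $q \in \bfp(X^{\an})$. The paper reduces this entirely to the single identity $\bfg_\lambda \ast \bfg_1 = \bfg_1$ \emph{inside} $G^{\an}$: one writes $m^{\ast}f$ for $f \in k[\Omega]$ in the coordinates $T_i = t_i - 1, u_j$, observes that nonvanishing of $(e^{\ast}\otimes\mathrm{id})(m^{\ast}f) = f$ forces a term with multi-index $(0,0)$ on the $\bfg_\lambda$-side, and concludes $\abs{m^{\ast}f}_{\bfg_\lambda, \bfg_1} = 1$. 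Associativity (Proposition~\ref{cd prop}, part 2) then gives $\bfg_\lambda \ast q = \bfg_\lambda \ast (\bfg_1 \ast p) = (\bfg_\lambda \ast \bfg_1) \ast p = q$ for every $q$ in the image, with no reference to $X$ at all. You instead verify $\bfg_\lambda \ast q = q$ directly for each $q = (\eta_Y, \gnor)$, running an $\alpha$-Cartesian argument parallel to Lemma~\ref{EqualityofmyvalandTheirval Lem}. That does work, but it is heavier and you leave two steps implicit: (a) the upper bound $\abs{m^{\ast}f}_{\bfg_\lambda, q} \leq \abs{f}_q$ requires knowing that $\abs{m}_q \leq \abs{f}_q$ for every $m$ in the $G$-module $M$ generated by $f$, which holds precisely because $\gnor_q$ is $G(k)$-invariant and $k$ is trivially valued, so $f$ realizes the maximum norm in $M$ and an $\alpha$-Cartesian basis $v_1,\dots,v_r$ with $v_1$ a multiple of $f$ satisfies $\max_j \abs{v_j}_q = \abs{f}_q$; and (b) the nonvanishing of the coefficient $h_1$ at $e$ needs the identity $\sum_j h_j(e) v_j = f$ together with linear independence of the $v_j$. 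Both can be supplied, so the argument is sound, but it re-derives on $X^{\an}$ a computation the paper does once and for all on $G^{\an}$, where the group structure makes it trivial. The trade-off: your version is self-contained at the level of points of $\bfp(X^{\an})$ and stays close to the valuation-theoretic formula for $\gnor_x$; the paper's version exploits the functoriality of $\ast$-multiplication more systematically and is the cleaner proof.
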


\begin{proof}
That $H$ is continuous follows directly from \cite[Corollary 6.1.2]{BerkBook}. It follows from Proposition \ref{cd prop} part 2 that $H$ is strong deformation retract because we have that $\bfg_{\lambda}\ast \bfg_1=\bfg_1$. Indeed, for $f\in k[\Omega]$ we will show that $\abs{m^{\ast}f}_{\bfg_{\lambda},\bfg_1}=1$. Let $m^{\ast}f=\sum a_{I,J}^{L,M}t^I u^J\otimes t^L u^M$. Without loss of generality we can assume that $a_{I,J}^{L,M}=0$ whenever $I$ or $L$ is less than 0, because otherwise we can factor out a power of $t^{R}\otimes t^S$, and $|\!|t^{R}\otimes t^S|\!|_{\bfg_{\lambda},\bfg_1}=1$. But then we can rewrite the sum by substituting $T_i$ for $t_i-1$, and we have that $m^{\ast}f=\sum b_{I,J}^{L,M}T^I u^J\otimes T^L u^M$. Note that $T^I$ and $u^J$ both evaluate to 0 on the identity. If $f\neq 0$ then $e^{\ast}\times \text{id}\circ m^{\ast}f\neq 0$, so there must be some term $b_{0,0}^{L,M}\neq 0$. But then:
\[
\abs{m^{\ast}f}_{\bfg_{\lambda},\bfg_1}=\abs{\sum b_{I,J}^{L,M}T^I u^J\otimes T^L u^M}_{\bfg_{\lambda},\bfg_1}=\max_{I,J,L,M}(\abs{b_{I,J}^{L,M}})(\lambda^{|I|+|J|})=1
\]
as the maximum is achieved when $|I|+|J|=0$.

\end{proof}

\section{The Luna-Vust Classification of Spherical varieties}\label{Colored fans sec}
Here we review the classification of embeddings of spherical homogeneous spaces via colored fans. The classification was first given by Luna and Vust in \cite{LV}, though we will follow the exposition of \cite{Knop}. Much of the notation for spherical varieties is different than that of toric varieties, we will attempt to align our notation with that of toric varieties, similar to the notation in \cite{Nash1}.

Let $k$ be an algebraically closed and trivially valued field\footnote{In the literature on spherical varieties one does not generally specify a valuation on $k$, but it is assumed the valuations on $k(X)$ are trivial when restricted to $k$.} and let $G$ be a reductive group over $k$. Let $H$ be a closed subgroup such that $G/H$ has an open $B$-orbit, for some Borel subgroup $B\subseteq G$. A \textbf{$G/H$-embedding} is a normal $G$-variety $X$ with a $G$-equivariant open embedding $G/H\hookrightarrow X$. Note that if $X$ is a spherical variety with open $B$-orbit $B_0$, then it is a $G_0$-embedding where $G_0$ is the open $G$-orbit $GB_0$. The Luna-Vust theory is a generalization of the toric case which puts embeddings of a torus in bijection with fans. Here $G/H$-embeddings are put in bijection with \textit{colored fans}. To define colored fans and state the classification we will need to introduce some data associated to $G/H$, primarily a lattice $M$, its dual lattice $N$, a convex cone $\cV$ in $N\otimes_{\bZ}\bR$, and a finite set of \textit{colors} $\cD(G/H)$. We refer the reader to Example \ref{SL2 ex} for an explicit demonstration of the theory.

\subsection{Data associated to $G/H$}

Let $\fX(B)$ be the character lattice of $B$ and define:
\[
k(G/H)^{(B)}=\{f\in k(G/H)^{\times}\, | \, \text{there exists $\chi_f\in \fX(B)$ such that for any $b\in B(k),$ } b\cdot f=\chi_f(b)f\}.
\]
We say elements of $k(G/H)^{(B)}$ are $B$ semi-invariant. There is a map $k(G/H)^{(B)}\rightarrow \fX(B)$ given by:
\[
f\mapsto \chi_f
\]
and the kernel is exactly $k^{\times}$. Let the image of this map be $M$, set $N=\hom(M, \bZ)$, $M_{\bR}=N\otimes_{\bZ}\bR$, and $N_{\bR}=N \otimes_{\bZ} \bR$. We say $M$ is the \textbf{weight lattice} of $G/H$. Notice that every  valuation $\gnor$ on $k(G/H)$ defines an element $\varrho(\gnor)\in N_{\bR}$ by:
\[
\varrho(\gnor)(\chi_f)=-\log(\abs{f}).
\]
In particular if $\cV$ is the set of $G(k)$-invariant valuations on $k(G/H)$ then the map $\varrho$ restricted to $\cV$ is an injection by \cite[Corollary 1.8]{Knop} and the image is a finitely generated convex cone \cite[Corollary 5.3]{Knop}. In general though the map $\varrho$ is \textit{not} injective: if $\gnor_1$ and $\gnor_2$ are distinct valuations, but not $G(k)$-invariant, then it is possible that $\varrho(\gnor_1)=\varrho(\gnor_2)$. We say $\cV$ is the \textbf{valuation cone} of $G/H$ and identify it with its image in $N_{\bR}$. The cone $\cV$ spans $N_{\bR}$ \cite[Corollary 5.3]{Knop} and is cosimplicial, i.e. there exist linearly independent elements $\chi_1,\ldots \chi_r $ in the dual space $ N_{\bR}^{\ast}$ such that $\cV=\{v\in N_{\bR} \,\, | \,\, \chi_i(v)\geq 0, \text{ for all $i$}  \}$ \cite[Theorem 5.4]{Knop}.

Now let $X$ be a $G/H$-embedding, and let $Y$ be a $G$-orbit. Then define:
\begin{gather*}
    \cD(X):=\{D\, | \, \text{$D$ a $B$-stable prime divisor in $X$}\} \\
    \cD_Y(X):=\{D\, | \, \text{$D$ a $B$-stable prime divisor in $X$ and $D\supseteq Y$}\}
\end{gather*}
We say that the \textbf{colors} of $G/H$ are $\cD(G/H)$. Notice that every prime divisor $D$, in particular every color, defines a valuation on $k(G/H)$, which we denote $\gnor_D$. These valuations then define an element of $N_{\bR}$, $\varrho(\gnor_D)$, so in particular each color defines an element of $N_{\bR}$. For convenience we may write $\varrho(D)$ when we mean $\varrho(\gnor_D)$. 

When we say `the data associated to $G/H$' we mean the vector space $N_{\bR}$, the valuation cone $\cV$ in $N_{\bR}$, and the set of colors $\cD(G/H)$.

\subsection{Colored fans}

We can now define colored fans and colored cones. By a rational point of $N_{\bR}$ we mean a point in $N\otimes_{\bZ}\bQ$. 

\begin{dfn}
A \textbf{colored cone} is a pair $(\sigma,\cF)$ where $\sigma$ is a strictly convex cone in $N_{\bR}$, and $\cF$ is a subset of $\cD(G/H)$, such that the following are satisfied:
\begin{enumerate}[label=\textbf{CC\arabic*}, leftmargin=2cm, topsep=0cm, itemsep=0cm]
    \item $\sigma$ is generated by $\varrho(\cF)$ and finitely many rational points of $\cV$
    \item The relative interior of $\sigma$ intersects $\cV$
    \item $0\notin \varrho(\cF)$.
\end{enumerate}
We say $(\sigma',\cF')$ is a \textbf{colored face} of a colored cone $(\sigma,\cF)$ if $\sigma'$ is a face of the cone $\sigma$,  the relative interior of $\sigma'$ intersects $\cV$, and $\cF'=\cF\cap\ \varrho^{-1}(\sigma)$. A \textbf{colored fan}, $\fF$, is a finite set of colored cones containing all the colored faces of any colored cone in $\fF$, and for any $\gnor \in \cV$ there is at most one colored cone $(\sigma,\cF)\in\fF$ with $\gnor$ in the relative interior of $\sigma$.
\end{dfn}

We associate a colored cone to each $G$-orbit in $X$, $Y$, as follows. First define:
\begin{gather*}
     \cB_Y(X)=\{D \in \cD_Y(X)\, | \, D \text{ is $G$-stable}\} \\
    \cF_Y(X)=\{D \in\cD_Y(X)\, | \,  D \text{ is not $G$-stable}\}
\end{gather*}
and let $\sigma_Y(X)$ be the cone in $N_{\bR}$ generated by $\rho(\cB_Y(X))$ and $\rho(\cF_Y(X))$.  We identify the elements of $\cF_Y(X)$ with elements of $\cD(G/H)$ by intersecting each element $D$ with $G/H$, as $D\cap G/H$ is dense in $D$ when $D$ isn't $G$-stable. Then we get a colored cone $(\sigma_Y(X),\cF_Y(X))$. Just as affine toric varieties correspond to cones, we have a particular type of $G/H$-embedding corresponding to colored cones. A $G/H$-embedding is \textbf{simple} if it has a unique closed $G$-orbit. 

\begin{thm}\cite[Theorem 3.1]{Knop}
The map 
\[
X\mapsto (\sigma_Y(X),\cF_Y(X))
\]
gives a bijection between isomorphism classes of simple $G/H$-embeddings and colored cones.
\end{thm}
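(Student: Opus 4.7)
The plan is to construct mutually inverse maps between isomorphism classes of simple $G/H$-embeddings and colored cones, and check they are inverses. First I would verify that the forward assignment $X \mapsto (\sigma_Y(X),\cF_Y(X))$ actually produces a colored cone. Axiom CC1 follows from the fact that a spherical variety has only finitely many $B$-orbits, hence only finitely many $B$-stable prime divisors, so $\cD_Y(X)$ is finite. Axiom CC3 follows because a non-$G$-stable $B$-divisor $D$ in $G/H$ has a local defining equation that is a $B$-semi-invariant of nontrivial weight, so $\varrho(\gnor_D)\neq 0$. Axiom CC2 asks that the relative interior of $\sigma_Y(X)$ meet $\cV$; this is produced by taking a generic positive combination of the $G$-invariant valuations $\gnor_D$ with $D\in\cB_Y(X)$ and using cosimpliciality of $\cV$ together with a standard convexity argument to handle the color generators.

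For injectivity, the key reduction is that a simple $G/H$-embedding $X$ with closed orbit $Y$ has a canonical affine $B$-stable open
\[
X_{B,Y} := X\setminus \bigcup_{D\in \cD(X)\setminus \cD_Y(X)} D,
\]
and that $X = G\cdot X_{B,Y}$. The heart of the argument is that the coordinate ring $k[X_{B,Y}]$ can be described purely in terms of $(\sigma_Y(X),\cF_Y(X))$ as the $k$-subalgebra of $k(G/H)$ consisting of those $f$ with $\abs{f}_\nu \leq 1$ for every valuation $\nu$ satisfying $\varrho(\nu)\in\sigma_Y(X)\cap\cV$, and $\abs{f}_D \leq 1$ for every $D\in \cF_Y(X)$. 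Two simple embeddings with the same colored data then have identical $B$-charts inside $k(G/H)$, so their $G$-translates patch to isomorphic varieties.

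For surjectivity, given a colored cone $(\sigma,\cF)$, I would define the candidate $B$-chart $X_0 := \spec A_{\sigma,\cF}$ using the subalgebra $A_{\sigma,\cF}\subseteq k(G/H)$ dictated by the injectivity formula, and then form $X$ by gluing $G$-translates of $X_0$ inside a sufficiently large ambient $G/H$-model. The main obstacle will be this surjectivity step: one must prove that $A_{\sigma,\cF}$ is a finitely generated normal $k$-algebra, check that the $G$-translates glue to a separated normal variety, and identify its unique closed orbit in order to recover $(\sigma,\cF)$. Finite generation rests on cosimpliciality of $\cV$ together with the result of Knop that $k[G/H]^{(B)}/k^\times$ is a finitely generated submonoid of $M$; separatedness and the orbit description combine the local structure theorem with Sumihiro's equivariant covering theorem to control how $G$-translates of $X_0$ overlap along $G/H$.
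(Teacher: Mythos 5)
The paper does not give a proof of this statement; it is cited directly from Knop's survey (Theorem 3.1 there) as part of the background on the Luna--Vust classification, so there is no in-paper argument to compare your sketch against. Your outline does track the standard Luna--Vust/Knop strategy (well-definedness of the colored-cone map, injectivity via the distinguished $B$-chart $X_0$, surjectivity by synthesizing a $B$-chart from the data), but a few things deserve tightening.

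First, you never address why $\sigma_Y(X)$ is strictly convex, yet strict convexity is part of this paper's definition of a colored cone; that is genuine content (in Knop it comes roughly from the uniqueness of the closed orbit plus the fact that the center of valuations in the relative interior is $Y$). Second, your argument for CC2 is quite vague: when $\cB_Y(X)=\emptyset$ the "generic positive combination of $G$-invariant divisorial valuations" has nothing to combine, and one has to argue directly from CC1 and the fact that the valuation cone $\cV$ is full-dimensional. Third, your recovery formula for $k[X_0]$ quantifies over \emph{all} valuations $\nu$ with $\varrho(\nu)\in\sigma_Y(X)\cap\cV$, but $\varrho$ is only injective on $G(k)$-invariant valuations; a non-$G$-invariant $\nu$ (for instance a color outside $\cF$) could land in $\sigma\cap\cV$ under $\varrho$ and then your formula would impose spurious conditions. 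The safe statement, and the one Knop actually uses (his Theorem 2.5, reproduced here as Theorem \ref{coloredconevals Thm}), characterizes $k[X_0]^{(B)}$ by $\chi_f\in\sigma^\vee$ and then recovers $k[X_0]$ from that together with $\cF$; this avoids the injectivity pitfall. Your sketch of the surjectivity step names the right tools (finite generation via cosimpliciality, normality, gluing $G$-translates via Sumihiro), but since the paper delegates the whole theorem to Knop, that is where the real work would need to be done.
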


We can construct all simple subembeddings of $X$ in the following way. Let $Y$ be a $G$-orbit in $X$, then define:
\begin{equation}\label{bestopen Eqn}
X_0:= X \setminus\left( \bigcup_{D\in\cD(X)\setminus \cD_Y(X)} D \right).
\end{equation}
Note that $\cD(X)\setminus \cD_Y(X)$ consists of all divisors $D$ such that $\gnor_D$ is \textit{not} an element of $\sigma_Y(X)$.

\begin{thm}\cite[Theorem 2.1]{Knop}\label{fundopen Thm}
The open subvariety $X_0$ is a  $B$-stable, affine open subset of $X$. The intersection $Y\cap X_0$ is an open $B$-orbit, and $GX_0$ is a simple embedding with closed orbit $Y$.  
\end{thm}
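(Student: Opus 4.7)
The plan is to deduce Theorem \ref{fundopen Thm} from the set-theoretic definition of $X_0$ together with the local structure theorem of Brion--Luna--Vust for spherical varieties, which I take as the key input.

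First, openness and $B$-stability of $X_0$ are formal: $\cD(X)$ is a finite collection of closed $B$-stable subvarieties, so the complement of their union is open and $B$-stable. Irreducibility of $Y$ together with $Y \not\subseteq D$ for each $D \in \cD(X) \setminus \cD_Y(X)$ gives $Y \cap X_0 \neq \emptyset$. Denoting by $Y^{\circ}$ the open $B$-orbit of the spherical variety $Y$, the inclusion $Y^{\circ} \subseteq X_0$ follows because if some $y \in Y^{\circ}$ lay on $D \in \cD(X) \setminus \cD_Y(X)$, then the $B$-stable closed subset $D \cap Y \subsetneq Y$ would contain $Y^{\circ}$ and hence equal $Y$, contradicting $D \not\supseteq Y$.

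The main obstacle, and the step where I invoke nontrivial input, is affineness. My plan is to apply the local structure theorem for spherical varieties: for the $G$-orbit $Y$, this produces a canonical $B$-stable affine open $X_0' \subseteq X$ containing $Y^{\circ}$, equipped with a $P$-equivariant isomorphism $X_0' \cong R_u(P) \times Z$ for a parabolic $P \supseteq B$ stabilizing $X_0'$ and an affine Levi section $Z$, together with the explicit description $X \setminus X_0' = \bigcup \{D : D \in \cD(X),\, D \not\supseteq Y\}$. Comparing, $X \setminus X_0' = \bigcup_{D \in \cD(X) \setminus \cD_Y(X)} D = X \setminus X_0$, so $X_0 = X_0'$ is affine, and $Y \cap X_0 = Y \cap X_0' = Y^{\circ}$.

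Finally, $GX_0 = G \cdot X_0$ is open and $G$-stable, and every $G$-orbit $Y' \subseteq GX_0$ satisfies $Y' \cap X_0 \neq \emptyset$. A point $y' \in Y' \cap X_0$ lies in no $D \in \cD(X) \setminus \cD_Y(X)$, so $Y' \not\subseteq D$ for any such $D$, i.e. $\cD_{Y'}(X) \subseteq \cD_Y(X)$. Under the isomorphism $X_0 \cong R_u(P) \times Z$, each $G$-orbit $Y' \subseteq GX_0$ meets $X_0$ in a single $P$-orbit corresponding to an $L$-orbit in $Z$, and the containment $\cD_{Y'}(X) \subseteq \cD_Y(X)$ forces this $L$-orbit to have the $L$-orbit corresponding to $Y$ in its closure. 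Hence $Y \subseteq \overline{Y'}$ for every $G$-orbit $Y'$ of $GX_0$, identifying $Y$ as the unique closed $G$-orbit and confirming that $GX_0$ is a simple $G/H$-embedding.
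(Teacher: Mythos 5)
The paper itself supplies no proof of this statement — it is cited verbatim as Theorem 2.1 of Knop's survey \cite{Knop}, so there is no internal argument to compare against. Evaluating your proposal on its own terms, the opening observations (openness, $B$-stability, $Y^{\circ}\subseteq X_0$) are fine, and taking the Brion--Luna--Vust local structure theorem as a black box is a legitimate strategy that is genuinely different from Knop's own proof, which reduces to the quasi-projective case and produces the affine open directly as the non-vanishing locus of a $B$-semi-invariant section of an ample $G$-linearized line bundle. Be aware, though, that the precise version of the local structure theorem you invoke — with the identification $X \setminus X_0' = \bigcup\{D : D\not\supseteq Y\}$ built in — is essentially equivalent to the affineness claim of Theorem 2.1, so most of the content is being imported rather than derived.

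The genuine gap is in the last step. You assert that the automatic inclusion $\cD_{Y'}(X)\subseteq\cD_Y(X)$ ``forces'' the $L$-orbit corresponding to $Y'$ to have the $L$-orbit corresponding to $Y$ in its closure. This implication is not justified and does not follow formally: the inclusion $\cD_{Y'}(X)\subseteq\cD_Y(X)$ holds for \emph{every} orbit meeting $X_0$ simply by definition of $X_0$, and by itself it carries no information about closure relations among $L$-orbits in $Z$. What you actually need, and what the standard argument uses, is the affineness of $Z$ in an essential way: since $Z$ is an affine $L$-spherical variety with a dense $L$-orbit, $k[Z]^L=k$ (any $L$-invariant regular function is constant on the dense orbit, hence constant), and because $L$ is reductive the categorical quotient $Z/\!/L$ is a single point, so $Z$ has a \emph{unique} closed $L$-orbit $Z_c$ lying in the closure of every $L$-orbit. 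One must then separately verify that $Z_c$ is the orbit corresponding to $Y$; this is where the construction of $X_0$ (every $B$-stable prime divisor of $X_0$ contains $Y\cap X_0$) enters and requires a short but nontrivial additional argument. As written, your proof substitutes a slogan for this step, and filling the gap is not a matter of routine bookkeeping.
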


When $X$ is simple we have that $GX_0=X$. The following theorem describes the cone $\sigma_Y(X)$ in this case.

\begin{thm}\cite[Theorem 2.5]{Knop}\label{coloredconevals Thm}
Let $X$ be a simple embedding with closed orbit $Y$, and let $X_0$ be as in \ref{bestopen Eqn}.
\begin{enumerate}
    \item $k[X_0]^{(B)}=\{f\in k(G/H)^{(B)}\, | \, \chi_f\in (\sigma_Y(X))^{\vee}\}$.
    \item The center of $\gnor$ exists if and only if $\gnor\in \sigma_Y(X)$.
    \item The center of $\gnor$ is $Y$ if and only if $\gnor$ is in the relative interior of $\sigma_Y(X)$.
\end{enumerate}
\end{thm}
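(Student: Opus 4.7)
The strategy is to establish part (1) by direct translation between the valuation-theoretic data attached to $B$-semi-invariants and the combinatorial definition of $\sigma_Y(X)$, and then to deduce parts (2) and (3) via the principle that a $G$-invariant non-Archimedean seminorm on $k[X_0]$ is controlled by its values on $B$-semi-invariants.

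For part (1), I would begin from the observation that any $f\in k(G/H)^{(B)}$ has zero and pole divisors supported on $B$-stable prime divisors, and that the formula $\varrho(\gnor_D)(\chi_f)=-\log\abs{f}_D$ identifies $\varrho(\gnor_D)(\chi_f)$ with the vanishing order of $f$ along $D$. Since $X_0$ is obtained from $X$ by removing precisely the $B$-stable prime divisors that do not contain $Y$, a semi-invariant $f$ is regular on $X_0$ iff it has no pole along any $D\in\cD_Y(X)$, iff $\varrho(\gnor_D)(\chi_f)\geq 0$ for every such $D$. As $\sigma_Y(X)$ is generated by the elements $\varrho(\gnor_D)$ for $D\in\cD_Y(X)$, this is exactly $\chi_f\in \sigma_Y(X)^{\vee}$. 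For parts (2) and (3), I would use that in a simple embedding every nonempty $G$-invariant closed subvariety of $X$ contains $Y$, so for $\gnor\in\cV$ the center exists in $X$ iff it exists in $X_0$, iff $\abs{f}\leq 1$ for all $f\in k[X_0]$. Applying Lie-Kolchin to finite-dimensional $B$-submodules of $k[X_0]$ and using the $B$-invariance of $\gnor$ together with the triviality of $\gnor$ on $k$, one reduces this to checking on semi-invariants; combined with part (1), the condition becomes $\varrho(\gnor)\in(\sigma_Y(X)^{\vee})^{\vee}=\sigma_Y(X)$, giving (2). For (3), the center is the generic point of $Y$ iff the prime ideal $\{f\in k[X_0]:\abs{f}<1\}$ is the ideal of $Y\cap X_0$; reducing once more to semi-invariants, this becomes $\abs{f}<1$ iff $f$ vanishes on the open $B$-orbit $Y\cap X_0$. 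Since a semi-invariant $f\in k[X_0]^{(B)}$ vanishes on $Y\cap X_0$ exactly when $\chi_f$ lies in the relative interior of $\sigma_Y(X)^{\vee}$, this is dually equivalent to $\varrho(\gnor)$ lying in the relative interior of $\sigma_Y(X)$.

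The principal obstacle is the reduction to semi-invariants: one must show that a $G$-invariant non-Archimedean seminorm on $k[X_0]$ bounded by $1$ on every $B$-semi-invariant is bounded by $1$ everywhere. This should follow from a Lie-Kolchin filtration applied to each finite-dimensional $B$-submodule, combined with the ultrametric inequality and the triviality of the valuation on $k$, but executing the induction correctly is delicate. A secondary subtlety in (3) is to verify that two $B$-semi-invariants of the same weight in $k[X_0]^{(B)}$ either both vanish on $Y\cap X_0$ or neither does; this uses that $Y\cap X_0$ is a single $B$-orbit, so that $B$-semi-invariants of a given weight are determined up to scaling by their restriction to this orbit.
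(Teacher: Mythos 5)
The paper does not prove this statement; it is quoted from Knop's Theorem 2.5, so I am assessing your reconstruction on its own terms. Part (1) as you argue it is correct and standard (the divisor of a semi-invariant is $B$-stable, $X$ is normal, and the $B$-stable prime divisors meeting $X_0$ are exactly those in $\cD_Y(X)$). The problems are in the reduction to semi-invariants and in part (3).

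The reduction you propose for (2) --- Lie--Kolchin applied to finite-dimensional $B$-submodules of $k[X_0]$, plus $B$-invariance, the ultrametric inequality, and triviality on $k$ --- does not yield the claim, and no amount of care with the induction will fix it. For a finite-dimensional $B$-module $M$ the set $\{h\in M : \abs{h}<\max_M\abs{\cdot}\}$ is indeed a proper $B$-stable linear subspace, but it may contain \emph{every} $B$-eigenvector of $M$: already for the restriction to the Borel of $\mathrm{SL}_2$ of the standard representation, $M^{(B)}$ is a single line, and one checks that assigning that line a strictly larger value than the rest of $M$ is consistent with $B(k)$-invariance and the ultrametric inequality. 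So a $B$-invariant valuation can be bounded by $1$ on all $B$-semi-invariants of $k[X_0]$ without being bounded by $1$ on $k[X_0]$. What rescues the statement is the full $G(k)$-invariance of $\gnor$: one must pass to the $G$-module $M=\langle G(k)f\rangle$, where the locus of non-maximality is a proper $G$-stable subspace, so that $\abs{f}=\max_M\abs{\cdot}$ is attained at a $B$-eigenvector $h$ of $M$ (this is the mechanism of Knop's Lemma 1.4 and Corollary 1.5, reproduced in the paper's Lemma \ref{OpenWhereMinIsAttained Lem}). But this creates a second gap your sketch does not address: $X_0$ is only $B$-stable, so $M\not\subseteq k[X_0]$, and the eigenvector $h$ is not a priori in $k[X_0]^{(B)}$; one must separately show that $h$ has no poles along the divisors in $\cD_Y(X)$ (which requires controlling which $G$-translates of divisors in $\cD(X)\setminus\cD_Y(X)$ can be $B$-stable). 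Without both steps the chain of equivalences in (2) is not established.

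In part (3) the dual criterion is wrong. A nonzero $f\in k[X_0]^{(B)}$ vanishes on $Y\cap X_0$ if and only if $\chi_f\notin\sigma_Y(X)^{\perp}$ (if $f|_Y\neq 0$ it is a semi-invariant of $Y$, whose weight lattice is $\sigma_Y(X)^{\perp}\cap M$ by Theorem \ref{coloredsubspace Thm}; the converse uses your scaling argument, which is fine). This is not the same as $\chi_f\in\mathrm{relint}(\sigma_Y(X)^{\vee})$: for $X=\bA^2$ as a toric surface the function $x$ vanishes at the origin while $\chi_x$ lies on the boundary of $\sigma^{\vee}$. Moreover the duality you then invoke is also off: $\langle u,w\rangle>0$ for all $u\in\mathrm{relint}(\sigma^{\vee})$ holds for every $w\in\sigma\setminus\{0\}$, not only for $w$ in the relative interior of $\sigma$. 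So your argument would prove ``the center is $Y$ if and only if $\varrho(\gnor)\in\sigma_Y(X)\setminus\{0\}$,'' which contradicts the statement whenever $\sigma_Y(X)$ has a nonzero proper colored face (such a face corresponds by Lemma \ref{orbitcone Lem} to a larger orbit, which is the center of valuations in its relative interior). The correct duality is: $\varrho(\gnor)\in\mathrm{relint}(\sigma)$ if and only if $\langle u,\varrho(\gnor)\rangle>0$ for all $u\in\sigma^{\vee}\setminus\sigma^{\perp}$, which matches the corrected vanishing criterion.
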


Furthermore we have the following about the faces of $\sigma_Y(X)$ and the orbits of $X$.

\begin{lem}\cite[Lemma 3.2]{Knop}\label{orbitcone Lem}
The map
\[
Z\mapsto (\sigma_Z(X),\cF_Z(X))
\]
gives a bijection between orbits $Z$ whose closure contains $Y$ and the colored faces of $(\sigma_Y(X),\cF_Y(X))$.
\end{lem}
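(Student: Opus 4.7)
My plan is to construct the inverse to the map $Z \mapsto (\sigma_Z(X),\cF_Z(X))$ orbit-theoretically and to reduce the statement to the Luna--Vust classification of simple $G/H$-embeddings (Theorem 3.1 above). The crucial input is Theorem \ref{fundopen Thm}: for each orbit $Z$ in $X$, the open subvariety $X_0^Z := X \setminus \bigcup_{D \in \cD(X) \setminus \cD_Z(X)} D$ produces a simple $G/H$-subembedding $GX_0^Z$ of $X$ whose unique closed orbit is $Z$. By construction, its $B$-stable prime divisors containing $Z$ are exactly $\cD_Z(X)$, so under the Luna--Vust classification its colored cone is precisely $(\sigma_Z(X), \cF_Z(X))$. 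This shows the image of the map is genuinely a colored cone, and injectivity is immediate: two distinct orbits yield two distinct simple subembeddings of $X$ (with different closed orbits), hence distinct colored cones.

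Next I would check that $(\sigma_Z(X), \cF_Z(X))$ is actually a colored face of $(\sigma_Y(X), \cF_Y(X))$. The inclusion $\sigma_Z(X) \subseteq \sigma_Y(X)$ follows because every closed $G$-invariant subset of the simple embedding $X$ contains the unique closed orbit $Y$; in particular $\overline{Z} \supseteq Y$, so any $B$-stable divisor containing $Z$ contains $Y$, giving $\cD_Z(X) \subseteq \cD_Y(X)$ and hence the cone inclusion. For the face property, the inclusion $X_0^Z \subseteq X_0$ combined with Theorem \ref{coloredconevals Thm}(1) yields $\sigma_Y(X)^{\vee} \subseteq \sigma_Z(X)^{\vee}$ via $k[X_0]^{(B)} \subseteq k[X_0^Z]^{(B)}$. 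To pinpoint a supporting hyperplane I would produce a $B$-semi-invariant $f \in k[X_0]^{(B)}$ whose divisor of zeros on $X_0$ is supported exactly on the divisors in $\cD_Y(X) \setminus \cD_Z(X)$; such an $f$ exists because $X_0$ is an affine $B$-variety, so the $B$-stable ideal of the $B$-stable closed subscheme $X_0 \setminus X_0^Z$ decomposes into $B$-eigenspaces by solvability of $B$, and combining finitely many such eigenvectors yields the required $f$. Then $\chi_f \in \sigma_Y(X)^{\vee}$ vanishes on $\varrho(\cD_Z(X))$ and is strictly positive on $\varrho(\cD_Y(X) \setminus \cD_Z(X))$, realizing $\sigma_Z(X) = \sigma_Y(X) \cap \chi_f^{\perp}$ as a face. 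The same functional handles the color condition $\cF_Z(X) = \cF_Y(X) \cap \varrho^{-1}(\sigma_Z(X))$: a color $D \in \cF_Y(X) \subseteq \cD_Y(X)$ with $\varrho(D) \in \sigma_Z(X)$ must satisfy $\chi_f(\varrho(D)) = 0$, hence $\mathrm{ord}_D(f) = 0$, which by the choice of $f$ forces $D \in \cD_Z(X)$ and therefore $D \in \cF_Z(X)$.

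For surjectivity, given a colored face $(\sigma', \cF')$ of $(\sigma_Y(X), \cF_Y(X))$, I would pick a $G$-invariant valuation $\gnor$ in $\mathrm{relint}(\sigma') \cap \cV$ (nonempty by the colored-face axiom), take its center $\overline{Z}$ in $X$ via Theorem \ref{coloredconevals Thm}(2), and let $Z$ be the generic orbit of this center. Theorem \ref{coloredconevals Thm}(3) applied to the simple subembedding $GX_0^Z$ then shows $\gnor \in \mathrm{relint}(\sigma_Z(X))$, and since two faces of a cone sharing a relative-interior point coincide, $\sigma' = \sigma_Z(X)$; the color compatibility then forces $\cF' = \cF_Z(X)$, completing the bijection. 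The main obstacle I anticipate is the construction of the $B$-semi-invariant $f$ with the prescribed vanishing locus on $X_0$: the purely combinatorial inclusion of cones is automatic, but certifying both the face property and the color-compatibility condition depends on this explicit algebraic input, which rests on the representation theory of $B$ acting on $k[X_0]$.
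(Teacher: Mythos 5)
The paper does not actually prove this lemma --- it is quoted from \cite[Lemma 3.2]{Knop} without argument --- so there is no in-paper proof to compare against; I am judging your proposal on its own terms. Your architecture is the standard one and is sound in outline: realize $(\sigma_Z(X),\cF_Z(X))$ as the colored cone of the simple subembedding $GX_0^Z$ from Theorem \ref{fundopen Thm} to get well-definedness and injectivity from the classification of simple embeddings, and get surjectivity by taking the center of a valuation in $\operatorname{relint}(\sigma')\cap\cV$ and applying Theorem \ref{coloredconevals Thm}(3) to $GX_0^Z$. Those parts go through (modulo the reduction from a general $X$ and orbit $Y$ to the simple case, which you silently assume).

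The genuine gap is the existence of the semi-invariant $f\in k[X_0]^{(B)}$ with $\mathrm{ord}_D(f)>0$ for every $D\in\cD_Y(X)\setminus\cD_Z(X)$ and $\mathrm{ord}_D(f)=0$ for every $D\in\cD_Z(X)$. The Lie--Kolchin step does produce a $B$-eigenvector in the ideal of $X_0\setminus X_0^Z$, and any such eigenvector vanishes on all of $\cD_Y\setminus\cD_Z$; but nothing in your argument prevents it from \emph{also} vanishing along some $D\in\cD_Z$, and ``combining finitely many such eigenvectors'' cannot repair this: a sum of eigenvectors with distinct characters is not an eigenvector, and a product only increases orders of vanishing. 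Note that by Theorem \ref{coloredconevals Thm}(1) the existence of such an $f$ is equivalent to the existence of $\chi\in\sigma_Y(X)^{\vee}\cap M$ vanishing on $\varrho(\cD_Z)$ and strictly positive on $\varrho(\cD_Y\setminus\cD_Z)$ --- which is exactly the conjunction of the face property and the color condition $\cF_Z=\cF_Y\cap\varrho^{-1}(\sigma_Z)$ that you are trying to establish, and which can fail for an arbitrary configuration of cone generators. So essentially all of the geometric content of the ``lands in colored faces'' half of the lemma is concentrated in this one step, and the representation-theoretic argument as written does not deliver it. Filling it requires a genuine structural input about spherical varieties, e.g.\ that $X_0^Z$ is a principal $B$-stable affine open of $X_0$ cut out by a single $B$-eigenfunction (which one gets from the local structure of $X_0$ and the triviality of the divisor class group of its open $B$-orbit), rather than the mere existence of eigenvectors in $B$-stable ideals.
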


Due to the above, and the fact that there are finitely many $G$-orbits in $X$ (\cite{Knop}), we have that the set $\fF(X):=\{(\sigma_Y(X),\cF_Y(X))\, | \, \text{$Y$ is a $G$-orbit}\}$ is a colored fan. We can now state the main theorem in the classification.

\begin{thm}\cite[Theorem 3.3]{Knop}\label{colorfan Thm}
The map:
\[
X\mapsto \fF(X)
\]
is a bijection between isomorphism classes of $G/H$-embeddings and colored fans.
\end{thm}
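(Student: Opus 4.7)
The plan is to build an inverse map $\fF \mapsto X(\fF)$ and verify it is inverse to $X \mapsto \fF(X)$ on both sides. Well-definedness of the latter -- that $\fF(X)$ really is a colored fan -- is already essentially assembled in the excerpt: finiteness of $\fF(X)$ reduces to finiteness of $G$-orbits on $X$; the colored face axiom is Lemma \ref{orbitcone Lem}; and the ``disjoint relative interiors'' axiom follows from Theorem \ref{coloredconevals Thm}(3), since any $\gnor \in \cV$ in the relative interior of $\sigma_Y(X)$ has center exactly $Y$, so it cannot lie in the relative interiors of two distinct cones of $\fF(X)$.

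For the inverse, given a colored fan $\fF$ I would first apply the simple-embedding classification (\cite[Theorem 3.1]{Knop}, quoted in the excerpt) to produce a simple $G/H$-embedding $X_{(\sigma,\cF)}$ for each colored cone. Whenever $(\tau,\cE)$ is a colored face of $(\sigma,\cF)$, Lemma \ref{orbitcone Lem} combined with Theorem \ref{coloredconevals Thm}(1) identifies $X_{(\tau,\cE)}$ with a canonical $G$-invariant open subscheme of $X_{(\sigma,\cF)}$: the affine piece $X_0$ of \eqref{bestopen Eqn} attached to the orbit of $(\tau,\cE)$ is determined by $(\tau,\cE)$ alone via its ring of $B$-semi-invariant functions $\{f \mid \chi_f \in \tau^\vee\}$. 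I would then define $X(\fF)$ by gluing the $X_{(\sigma,\cF)}$ along these common open subschemes. The cocycle condition is automatic because every chart contains $G/H$ as a dense open and the transition maps restrict to the identity on $G/H$.

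The main obstacle is showing $X(\fF)$ is separated; this is the single place where the ``at most one relative interior'' axiom is essential. I would verify the valuative criterion over Puiseux series $\overline{K}$: if $x \in X(\fF)(\overline{K})$ had centers in two pieces $X_{(\sigma,\cF)}$ and $X_{(\sigma',\cF')}$, then by Theorem \ref{coloredconevals Thm}(2) and the construction of $\gnor_x$ from \eqref{TheVal Eqn}, the $G(k)$-invariant valuation $\gnor_x \in \cV$ would lie in both $\sigma$ and $\sigma'$. Taking the smallest colored face of each cone whose relative interior meets $\gnor_x$, the disjointness axiom forces these faces to coincide, so the two centers are the same orbit; a short orbit-averaging argument in the style of \cite{LV} extends the uniqueness of centers from $G(k)$-invariant valuations to all $\overline{K}$-points, completing separatedness.

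With separatedness in hand, both halves of the bijection are formal. For injectivity, $\fF(X) = \fF(X')$ produces, cone by cone, isomorphisms $X_Y \cong X'_{Y'}$ of simple $G/H$-embeddings via Theorem 3.1; these all restrict to the identity on $G/H$, so they glue to an isomorphism $X \cong X'$. For surjectivity, the $G$-orbits of the constructed $X(\fF)$ are exactly those of its simple pieces $X_{(\sigma,\cF)}$, and each $GX_0$ recovers the corresponding colored cone, so $\fF(X(\fF)) = \fF$.
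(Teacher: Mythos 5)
The paper does not prove this statement at all; it is quoted directly from \cite[Theorem 3.3]{Knop}, which is itself an exposition of the Luna--Vust classification. So there is no internal argument to compare yours against, and your proposal has to stand as a self-contained reconstruction of that theory. Your outline --- verify that $\fF(X)$ is a colored fan, glue the simple embeddings of \cite[Theorem 3.1]{Knop} along the open subembeddings given by common colored faces, check separatedness valuatively, then deduce the bijection formally --- is the standard route, and the well-definedness, gluing, injectivity and surjectivity steps are essentially right as sketched.

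The genuine gap is in the separatedness step, in two places. First, you verify the valuative criterion only for points of $X(\fF)(\overline{K})$ whose generic point lands in $G/H$; the criterion requires arbitrary discrete valuation rings and arbitrary images of the generic point, so you need at minimum a reduction (say, induction over orbit closures via Theorem \ref{godforsakenlemma2 Thm}, or the Luna--Vust fact that $K$-points of $G/H$ detect every local ring of every embedding), together with a justification for replacing DVRs by the non-noetherian valuation ring of $\overline{K}$. Second, and more seriously, Theorem \ref{coloredconevals Thm}(2)--(3) describes the center of a $G(k)$-invariant \emph{valuation} $\gnor\in\cV$, whereas what you must control is the center of the $\overline{K}$-\emph{point} $x$ itself. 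These are different notions, and the bridge between them --- that for $g\in G(k)$ in general position the center of $gx$ lies in the orbit which is the center of $\gnor_x$, so that the $G$-orbit of the center of $x$ is determined by $\gnor_x$ --- is precisely the technical core of \cite{LV} and of Knop's Section 2, not a ``short orbit-averaging argument'' that can be waved at. Once that bridge is supplied the rest does close up: the two extensions of $\spec R\rightarrow X(\fF)$ both factor through the common open simple subembedding (as $\spec R$ has only two points, mapping into $G/H$ and into the common orbit), which is separated, so they coincide.
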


\begin{ex}
The classification of toric varieties by fans is a special case of the Luna-Vust theory. Let $G=T$ be a torus, let $H$ be trivial. Then $\cD(T)=\emptyset$, $N$ is the cocharacter lattice of $T$, and $T$-embeddings are classified by fans in $N_{\bR}$.
\end{ex}

\begin{ex}\label{SL2 ex}
Let $G=\text{SL}_2$ and let $H$ be the subgroup of upper triangular matrices with $1$s on the diagonal. Let $O$ denote the origin of $\bA^{2}$, there is an isomorphism $G/H\rightarrow \bA^{2}_k\setminus O$ given by mapping a matrix to its first column. The complement of the $x$-axis is an open orbit of the Borel subgroup of upper triangular matrices, $B$. We have that $\fX(B)\cong \bZ$, where the generator is the character $\chi$ given by:
\[
\left[ {\begin{array}{cc}
    a & b \\
    0 & a^{-1} \\
  \end{array} } \right] \mapsto a.
\] Furthermore $k(G/H)^{(B)}/k^{\times}=\{y^{n}\, | \, n\in \bZ\}$ and $\chi_{y^n}=\chi^{-n}$, so $M=\fX(B)$. Thus $N_{\bR}\cong \bR$. Let $D$ be the $x$-axis in $\bA^{2}$. Consider the embedding $\bA^{2}\hookrightarrow \bP^2$ given by $(a,c)\mapsto [a:c:1]$. Extend the $\text{SL}_2$-action to $\bP^2$ by letting $\text{SL}_2$ act on the first two homogeneous coordinates while preserving the third.

Let $L$ be the line at infinity in $\bP^2$. Then both $\gnor_D$ and $\gnor_L$ are $G(k)$-invariant and furthermore $\varrho(\gnor_D)=1$ and $\varrho(\gnor_{L})=-1$, thus $\cV=N$. There is one color, given by the $x$-axis. The table below consists of a complete list of all $G/H$-embeddings. The hollow circle denotes the color $D$, and $E$ denotes the exceptional divisor in the blow ups at $O$: $\text{Bl}_0\bA^2$ and $\text{Bl}_O\bP^2$.
\begin{center}
\begin{tikzpicture}
	 \draw [thick] (0,7) 
      to  (8,7)
      to (8,2.5) 
      to  (0,2.5)
      to (0,7);
      \draw [thick] (2,7) 
      to  (2,2.5);
      \draw [thick] (4,7) 
      to  (4,2.5);
      \draw  (0,6) 
      to  (8,6);
      \draw (1,6.5) node [black] {$X$};
      \draw (3,6.75) node [black] {Closed };
      \draw (3,6.25) node [black] {$G$-orbits};
      \draw (6,6.5) node [black] {$\fF(X)$};
      \draw (1,5.5) node [black] {$\bA^{2}\setminus O$};
      \draw (1,5) node [black] {$\text{Bl}_O \bA^2$};
      \draw (1,4.5) node [black] {$\bP^2\setminus O$};
      \draw (1,4) node [black] {$\text{Bl}_O \bP^2$};
      \draw (1,3.5) node [black] {$\bA^{2}$};
      \draw (1,3) node [black] {$\bP^2$};
      \draw (3,5) node [black] {$E$};
      \draw (3,4.5) node [black] {$L$};
      \draw (3,4) node [black] {$L,\, E$};
      \draw (3,3.5) node [black] {$O$};
      \draw (3,3) node [black] {$L, \, O$};
      \draw [black,fill] (6,5.5) circle [radius=0.07];
      \draw [black,fill] (6,5) circle [radius=0.07];
      \draw [arrow] (6,5) -- (7.5,5);
      \draw [black,fill] (6,4.5) circle [radius=0.07];
      \draw [arrow] (6,4.5) -- (4.5,4.5);
      \draw [black,fill] (6,4) circle [radius=0.07];
      \draw [arrow] (6,4) -- (7.5,4);
      \draw [arrow] (6,4) -- (4.5,4);
      \draw [black,fill] (6,3.5) circle [radius=0.07];
      \draw [arrow] (6,3.5) -- (7.5,3.5);
      \draw [VioletBlue,thick] (6.5,3.5) circle [radius=0.1];
      \draw [black,fill] (6,3) circle [radius=0.07];
      \draw [arrow] (6,3) -- (7.5,3);
      \draw [VioletBlue,thick] (6.5,3) circle [radius=0.1];
      \draw [arrow] (6,3) -- (4.5,3);
      \draw [dashed] (4.5,5.5) -- (7.5,5.5);
      \draw [dashed] (4.5,5) -- (7.5,5);
      \draw [dashed] (4.5,4.5) -- (7.5,4.5);
      \draw [dashed] (4.5,4) -- (7.5,4);
      \draw [dashed] (4.5,3.5) -- (7.5,3.5);
      \draw [dashed] (4.5,3) -- (7.5,3);
\end{tikzpicture}
\end{center}
\end{ex}

\subsection{Orbits and Orbit Closures}
Let $Z$ be the $G$-orbit of a closed point in $X$. Notice that from Theorem \ref{fundopen Thm} it follows that $Z$ is spherical. Let $\overline{Z}$ be the closure of $Z$ in $X$, so $\overline{Z}$ is a $Z$-embedding. Let $M(Z)$ be the weight lattice of $Z$, $N_{\bR}(Z)=\hom(M(Z),\bR)$, $\cV(Z)$ be the valuation cone, and $\cD(Z)$ be the $B$-stable prime divisors. We will relate the above to $M$, $N_{\bR}$, $\cV$, and $\cD(G/H)$. Furthermore we will give the relationship between the colored fan of $X$ and that of $\overline{Z}$. See \cite[Section 4]{Knop} for further discussion of the following.

Given $z\in Z$ there is a map $\varphi \colon  G/H\rightarrow Z$ given by $gH\mapsto gz$. This induces a map of function fields $k(Z)\hookrightarrow k(G/H)$. The map $k(Z)\hookrightarrow k(G/H)$ gives an inclusion $M(Z)\hookrightarrow M$ and after dualizing, a surjection $\varphi_* \colon  N_{\bR}\rightarrow N_{\bR}(Z)$. Let $\tau=\sigma_{Z}(X)$, $N(\tau)=N_{\bR}/\spn(\tau)$, and let $M(\tau)$ be the dual vector space to $N(\tau)$. Consider the following subset of $\cD$:
\[
 \cF_{\varphi}=\{D\in \cD \, | \, \text{$D$ maps dominantly to $Z$}\}.
\]
 Let $\cF_{\phi}^{\text{c}}=\cD\setminus \cF_{\phi}$.

\begin{thm}\cite[Theorem 4.4]{Knop}\label{coloredsubspace Thm}
We have that $N(\tau)=N(Z)$, the map $\varphi_*$ is the projection $N\rightarrow N(\tau)$, $M(\tau)=M(Z)$, $\varphi_*(\cV)=\cV(Z)$, and there is a bijection $\cF_{\varphi}^{\text{c}}\rightarrow \cD(Z)$ given by $\varphi$.
\end{thm}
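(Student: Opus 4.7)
The five claims organize into three parts: (1)--(3) are equivalent, via $\bR$-duality between $M$ and $N_{\bR}$, to the single lattice identification $M(Z)=M(\tau)$ inside $M$; (4) is a statement about $G(k)$-invariant valuations; and (5) is a statement about $B$-stable prime divisors. Since $\varphi\colon G/H\to Z$ is surjective and $G$-equivariant, the pullback $\varphi^{\ast}\colon k(Z)\hookrightarrow k(G/H)$ is $B$-equivariant and preserves characters on $B$-semi-invariants, giving the inclusion $M(Z)\hookrightarrow M$ and, dually, a surjection $\varphi_{\ast}\colon N_{\bR}\twoheadrightarrow N(Z)_{\bR}$ whose kernel is the annihilator of $M(Z)$ in $N_{\bR}$. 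Because $M(\tau)=\tau^{\perp}\cap M$ is by definition the annihilator of $\spn(\tau)$, the identity $M(Z)=M(\tau)$ is equivalent via double annihilator to $\ker(\varphi_{\ast})=\spn(\tau)$, and hence to $\varphi_{\ast}$ being the projection $N_{\bR}\twoheadrightarrow N_{\bR}/\spn(\tau)=N(\tau)_{\bR}$. This collapses (1), (2), and (3) to the single identity $M(Z)=M(\tau)$.

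\textbf{Proof of $M(\tau)=M(Z)$.} By Theorem \ref{coloredconevals Thm}(1), elements of $M(\tau)$ correspond, modulo $k^{\times}$, to $B$-semi-invariant units in $k[X_{0}]^{(B)}$, where $X_{0}$ is the canonical $B$-stable affine open of the simple subembedding determined by $Z$ (Theorem \ref{fundopen Thm}). For $M(\tau)\subseteq M(Z)$: any such unit $f$ restricts to a $B$-semi-invariant unit on the open $B$-orbit $Z\cap X_{0}\subseteq Z$, so $\chi_{f}\in M(Z)$. For the reverse inclusion, given $g\in k(Z)^{(B)}$, set $f=\varphi^{\ast}g\in k(G/H)^{(B)}$; the required assertion $\chi_{f}\in \tau^{\perp}$ reduces to showing $|f|_{D}=1$ for every $D\in \cD_{Z}(X)$. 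If $D\in \cB_{Z}(X)$ then $D$ is $G$-stable, hence disjoint from the homogeneous space $G/H$ (which has no proper $G$-stable closed subset), so $D$ cannot appear in the divisor of $f$. If $D\in \cF_{Z}(X)$, the color $D_{0}=D\cap G/H$ satisfies $\overline{D_{0}}\supseteq Z$; the key geometric claim is that every such color dominates $Z$ under $\varphi$, so the generic point of $D_{0}$ maps to the generic point of $Z$, where $g$ is a unit, and therefore $\varphi^{\ast}g$ is a unit along $D_{0}$.

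\textbf{Valuations and colors.} For (4), one direction is direct: if $\nu\in \cV$, then its restriction to $\varphi^{\ast}k(Z)\subseteq k(G/H)$ is $G(k)$-invariant on $k(Z)$ by $G$-equivariance of $\varphi$, and on $M(Z)\subseteq M$ this restriction coincides with $\varphi_{\ast}\varrho(\nu)$, so $\varphi_{\ast}(\nu)\in \cV(Z)$. The reverse inclusion follows from the standard extension principle for $G(k)$-invariant valuations on spherical varieties (cf.\ \cite[Section 4]{LV}): any $\mu\in \cV(Z)$ admits a $G(k)$-invariant extension $\tilde{\mu}\in \cV$ with $\varphi_{\ast}(\tilde{\mu})=\mu$. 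For (5), the map $D_{0}\mapsto \overline{\varphi(D_{0})}$ sends a color of $G/H$ that does not dominate $Z$ to a $B$-stable irreducible proper closed subset of $Z$; since $\varphi\colon G/H\to G/H'=Z$ is a smooth homogeneous fibration with equidimensional fibers (where $H'$ is the stabilizer of the chosen basepoint $z\in Z$), a dimension count forces $\overline{\varphi(D_{0})}$ to have codimension exactly one in $Z$, making it a color of $Z$. The inverse sends $E\in \cD(Z)$ to the unique color component of $\varphi^{-1}(E)$, which is a $B$-stable non-$G$-stable divisor in $G/H$ not dominating $Z$. The main obstacles are the two geometric facts invoked above: that a color whose $X$-closure contains $Z$ must dominate $Z$ under $\varphi$, and that the image of a non-dominating color has codimension exactly one in $Z$; both are local statements about the fibration $\varphi$ near $Z$ that typically require a slice-theoretic analysis.
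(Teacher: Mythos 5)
The paper does not prove this statement; it is quoted verbatim as \cite[Theorem 4.4]{Knop}, so there is no paper proof to compare your argument against. Evaluating your proposal on its own, the reduction of (1)--(3) to the single identity $M(\tau)=M(Z)$ via duality and double annihilator is sound, as is the inclusion $M(\tau)\subseteq M(Z)$ by restricting $B$-semi-invariant units of $k[X_0]$ to the open $B$-orbit of $Z$.

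However, the reverse inclusion $M(Z)\subseteq M(\tau)$ has a genuine gap. You need $\nu_D(\varphi^{\ast}g)=0$ for every $D\in\cD_Z(X)$, and for $D\in\cB_Z(X)$ you argue: ``$D$ is $G$-stable, hence disjoint from $G/H$, so $D$ cannot appear in the divisor of $f$.'' This is a non sequitur. A rational function on the open orbit extends to $X$ and its divisor on $X$ can absolutely contain $G$-stable boundary components; the standard toric example is $t\in k[T]$ on $\bA^1$, which has a zero along the $T$-stable origin even though the origin is disjoint from $T$. What must be used is the specific fact that $f$ is pulled back along $\varphi$ from $Z$, and some control over how $\varphi$ interacts with the boundary near $Z$ (which is where the structure of the fundamental $B$-stable affine open $X_0$ and Theorem \ref{coloredconevals Thm}(1)--(3) actually enter in Knop's treatment). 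Your handling of $\cF_Z(X)$ then rests on a ``key geometric claim'' that colors containing $Z$ dominate $Z$ under $\varphi$, which you correctly flag as unproven; similarly, the dimension count in (5) is asserted, not proved. These three points are not peripheral bookkeeping --- they are exactly the nontrivial content of the theorem, so the proposal as written does not constitute a proof.
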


To state the relationship between $\fF(X)$ and $\fF(\overline{Z})$ define the following colored fan:
\[
\text{Star}(\tau):=\{(\varphi_*(\sigma),\varphi_*(\cF\cap \cF_{\varphi}^{\text{c}}))\, | \, (\sigma,\cF)\in \fF(X) \text{ and $(\tau,\cF_Z(X))$ is a colored face of $(\sigma,\cF)$.}\}.
\] Then as a result of \cite[Theorem 4.5]{Knop} we have that. 

\begin{thm}\label{godforsakenlemma2 Thm}
Let $X$ be a $G/H$-embedding, and let $Z\subseteq X$ be a $G$-orbit. Let the Zariski closure of $Z$ in $X$ be $\overline{Z}$. Then $\text{Star}(\sigma_Z(X))=\fF(\overline{Z})$.
\end{thm}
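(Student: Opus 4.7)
The plan is to identify both sides of $\text{Star}(\sigma_Z(X)) = \fF(\overline{Z})$ by matching them colored cone by colored cone over a common indexing set of $G$-orbits, using Lemma \ref{orbitcone Lem} and Theorem \ref{coloredsubspace Thm}. This is essentially the combinatorial unpacking of \cite[Theorem 4.5]{Knop}.

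First I would establish the bijection on indexing sets. The colored fan $\fF(\overline{Z})$ is indexed by $G$-orbits $Y'$ contained in $\overline{Z}$. On the other side, $\text{Star}(\tau)$ is indexed by colored cones $(\sigma,\cF)\in\fF(X)$ having $(\tau,\cF_Z(X))$ as a colored face. By Lemma \ref{orbitcone Lem}, the colored cones of $\fF(X)$ that admit $(\tau,\cF_Z(X))$ as a colored face are precisely $(\sigma_{Y'}(X),\cF_{Y'}(X))$ for orbits $Y'$ with $\overline{Z} \supseteq Y'$, i.e.\ $Y' \subseteq \overline{Z}$. This matches the two indexing sets via $Y' \leftrightarrow (\sigma_{Y'}(X),\cF_{Y'}(X))$, and it remains to match the colored cones attached to each such $Y'$.

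Next, for a fixed $Y' \subseteq \overline{Z}$, I would compare $\sigma_{Y'}(\overline{Z})$ with $\varphi_*(\sigma_{Y'}(X))$. The cone $\sigma_{Y'}(X)$ is generated by $\varrho(\gnor_D)$ for $D \in \cD_{Y'}(X)$, and I would split these divisors according to whether they contain $\overline{Z}$. Those containing $\overline{Z}$ are precisely the divisors in $\cD_Z(X)$, whose valuations span $\tau$; since $\varphi_*$ is projection modulo $\spn(\tau)$ by Theorem \ref{coloredsubspace Thm}, these contribute nothing. The remaining divisors meet $\overline{Z}$ properly, and the functoriality of $\varrho$ under the inclusion $k(\overline{Z}) \hookrightarrow k(G/H)$ identifies the $\varphi_*$-images of their valuations with the $\varrho$-values of the corresponding $B$-stable prime divisors of $\overline{Z}$ containing $Y'$. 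This yields $\varphi_*(\sigma_{Y'}(X)) = \sigma_{Y'}(\overline{Z})$. A parallel analysis for the color data, using the bijection $\cF_\varphi^{\text{c}} \to \cD(Z)$ of Theorem \ref{coloredsubspace Thm}, shows that colors in $\cF_{Y'}(X) \cap \cF_\varphi$ correspond to divisors already containing $\overline{Z}$ and so vanish under $\varphi_*$, while those in $\cF_{Y'}(X) \cap \cF_\varphi^{\text{c}}$ map bijectively onto $\cF_{Y'}(\overline{Z})$.

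The main obstacle I anticipate is the color bookkeeping: verifying that the bijection $\cF_\varphi^{\text{c}} \to \cD(Z)$ respects the containment relation ``contains $Y'$'' and carries the $G$-stable/non-$G$-stable distinction correctly, so that the colors of $\overline{Z}$ are obtained precisely from $\cF_{Y'}(X) \cap \cF_\varphi^{\text{c}}$ and not contaminated by divisors that become $G$-stable after restriction. These compatibilities are exactly what is checked in \cite[Theorem 4.5]{Knop}, which can then be invoked to finish the proof once the indexing correspondence above is in place.
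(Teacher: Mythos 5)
Your proposal is correct and matches the paper's treatment: the paper gives no independent argument for this statement, deriving it directly from \cite[Theorem 4.5]{Knop}, and your orbit-by-orbit unpacking via Lemma \ref{orbitcone Lem} and Theorem \ref{coloredsubspace Thm} ultimately defers to that same theorem for the divisor and color bookkeeping. The indexing correspondence you set up (orbits $Y'\subseteq \overline{Z}$ versus colored cones admitting $(\tau,\cF_Z(X))$ as a colored face) is exactly the right reading of Lemma \ref{orbitcone Lem}, so nothing further is needed.
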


\section{Spherical Tropicalization}\label{tropicalization section}

In this Section we review tropicalization for toric varieties, tropicalization of spherical varieties, and prove Theorem \ref{IntroAthm}. We also discuss applications of Theorem \ref{IntroAthm}.  Let $k$ be an algebraically closed and trivially valued field, let $G$ be a reductive group over $k$, let $X$ be a spherical $G$-variety with open $G$-orbit $G/H$.

\subsection{Tropicalization of toric varieties}

 Let $\overline{K}=\ps$ be the field of Puiseux series, equipped with the $u$-adic valuation which we denote $\gnor_u$. Let $T=\spec k[t_1^{\pm 1},\ldots t_n^{\pm 1}]$ and let be $W$ a subvariety of $T$. There is a map of sets $\trp_T\colon  W(\overline{K})\rightarrow \bR^n$ given by:
\[
(t_1,\ldots t_n)\mapsto (-\log(\abs{t_1}_u),\ldots -\log(\abs{t_n}_u)).
\]
We say the closure of $\trp_T(W(\overline{K}))$ is the \textbf{tropicalization of} $W$, denoted $\trop_T(W)$.

This construction extends to subvarieties of toric varieties, as follows. Let $X$ be a toric variety, with dense torus $T$. Let $N$ be the cocharacter lattice of $T$, and let $F(X)$ be the fan in $N_{\bR}=N\otimes_{\bZ}\bR$ associated to $X$. Given a cone $\sigma \in F(X)$, consider the vector space $N(\sigma)=N_{\bR} / \spn (\sigma) $. As a set we define we the tropicalization of $X$ to be:
\[
\trop_T(X):=\bigsqcup_{\sigma\in F(X)} N(\sigma).
\]

To define a topology on $\trop_T(X)$ we first consider the case when $X$ is affine; so $X$ corresponds to a cone $\sigma \in F(X)$. Let $M_{\bR}$ be the dual vector space to $N_{\bR}$; let $\sigma^{\vee}\subseteq M_{\bR}$ denote linear functions on $N_{\bR}$ that are nonnegative on $\sigma$. Let $\overline{\bR}=\bR\cup \{\infty \}$.  If $\hom(\sigma^{\vee}, \overline{\bR})$ denotes semigroup homomorphisms $\sigma^{\vee}\rightarrow \overline{\bR}$, then there is a natural bijection $\trop_T(X)\rightarrow\hom(\sigma^{\vee}, \overline{\bR})$ defined as follows. The fan $F(X)$ consists of the faces of $\sigma$. Let $\tau$ be a face of $\sigma$ and let $M(\tau)$ be the dual of $N(\tau)$. Recall that $M(\tau)=\tau^{\perp}\subseteq M_{\bR}$. So $\alpha\in N(\tau)$ defines a semigroup homomorphism $\tilde{\alpha}\colon \sigma^{\vee}\rightarrow \overline{\bR}$
 \begin{displaymath}
\tilde{\alpha}(v)= \left\{
         \begin{array}{cc}
       \alpha(v), & v \in \tau^{\perp}\cap\sigma^{\vee} \\
       \infty, & \text{else} \\
         \end{array}
   \right.
   \end{displaymath}
This then defines a bijection $\trop_T(X)\rightarrow\hom(\sigma^{\vee}, \overline{\bR})$ and we give $\trop_T(X)$ the topology $\hom(\sigma^{\vee}, \overline{\bR})$ inherits from $\overline{\bR}^{\sigma^{\vee}}$. For a general toric variety $X$ we glue the tropicalizations of affine toric subvareities according to the shared faces of their respective cones in $F(X)$. For further discussion of tropicalization of toric varieties and their subvarieties see \cite{Analytification}. 

Recall that $X$ is a disjoint union of tori which are in bijection with the cones of $F(X)$. Thus there is a tropicalization map $X(\overline{K})\rightarrow \trop_T(X)$ which is defined by taking a disjoint union of the tropicalization maps of the tori $T(\sigma)\rightarrow N(\sigma) $, where $T(\sigma)$ is the torus in $X$ corresponding to $\sigma \in F(X)$. Thus for any subvariety $W$ of a toric variety $X$ we have a tropicalization map from $W(\overline{K})$ into $\trop_T(X)$. 

\subsection{Tropicalization of spherical varieties}

 In \cite{TV} Tevelev and Vogiannou define a tropicalization map $\trp_G\colon G/H(\overline{K})\rightarrow \cV$. The map is given as follows. For each $x\in G/H(\overline{K})$ define $\gnor_{x}$ to be the valuation given by:
\[
\abs{f}_x=\abs{gf(x)}_u \quad g\in U_f
\]
as in equation \ref{TheVal Eqn}. Then define $\trp_G(x)=\gnor_{x}$, this gives a map $G/H(\overline{K})\rightarrow \cV$. For a subvariety $W\subseteq G/H$ the \textbf{tropicalization} is the closure of $\trp_G(W(\overline{K}))$, in particular the tropicalization of $G/H$ is $\cV$.

In \cite{Nash1} the above is extended to construct a tropicalization for any $G/H$-embedding $X$, this is done similarly to the toric case. As a set the \textbf{tropicalization}, denoted $\trop_G(X)$, is defined to be:
\[
\trop_G(X):= \bigsqcup_{(\tau,\cF)\in \fF(X)}\cV(\tau)
\]
where $\cV(\tau)$ is the valuation cone of the $G$-orbit associated to $\tau$.

\begin{rem}
If $G$ is a torus and $X$ is a toric variety then the cones $\cV(\tau)$ are exactly the vector spaces $N(\tau)$ as in the previous subsection.
\end{rem}

We now put a topology on $\trop_G(X)$. Consider first the case when $X$ is simple with closed orbit $Y$; so $X$ corresponds to the colored cone $(\sigma_Y(X),\cF_Y(X))$. Let $\sigma= \sigma_Y(X)$. Recall that $\hom(\sigma^{\vee},\overline{\bR})$ is in bijection with the disjoint union of the vector spaces $N(\tau)=N_{\bR}/\spn(\tau)$, where $\tau$ ranges over the faces of $\sigma$. The bijeciton is given by $\alpha \mapsto \tilde{\alpha}$ where
\begin{displaymath}
\tilde{\alpha}(v)= \left\{
         \begin{array}{cc}
       \alpha(v), & v \in \tau^{\perp}\cap\sigma^{\vee} \\
       \infty, & \text{else} \\
         \end{array}
   \right.
   \end{displaymath}
The colored faces of $(\sigma,\cF_Y(X))$ are exactly the faces of $\sigma$ whose relative interior intersects $\cV$. Given such a face $\tau$, $N(\tau)$ contains the valuation cone $\cV(\tau)$. Thus we have that $\trop_G(X)\subseteq \hom(\sigma^{\vee},\overline{\bR})\subseteq \overline{\bR}^{\sigma^{\vee}}$ and we give $\trop_G(X)$ the induced topology. For $X$ not necessarily simple, $\trop_G(X)$ is obtained by tropicalizing the simple spherical subvarieties and gluing the valuation cones of shared orbits.

\begin{rem}
Let $X$ be simple with closed orbit $Y$, and $\sigma=\sigma_Y(X)$. In general $\trop_G(X)$ is not equal to $\hom(\sigma^{\vee},\overline{\bR})$ as we do not necessarily have that $\sigma\subseteq \cV$ or that $\cV(\tau)=N(\tau)$, for a given colored face $(\tau,\cF)$. 
\end{rem}

\begin{ex}
Let $X=\text{Bl}_O\bP^2$ be the embedding of $\text{SL}_2/H$ as described in example \ref{SL2 ex}. Then the tropicalization is given by
\begin{center}
\begin{tikzpicture}
     \draw [black,fill] (6,3) circle [radius=0.07];
      \draw  (6,3) -- (3,3);
       \draw [black,fill] (3,3) circle [radius=0.07];
\end{tikzpicture}
\end{center}
The left most point representing the valuation cone of $L$, the right most of $E$, and the interior representing the valuation cone of $\text{SL}_2/H$. One can find further illustrated examples of this in Sections 4 and 5 of \cite{Nash1}.
\end{ex}

We have a tropicalization map $\trop_G\colon X(\overline{K})\rightarrow \trop_G(X)$ extending the one defined in \cite{TV}. This is because all $G$-orbits of $X$ are spherical homogeneous spaces so we can take the disjoint union of the tropicalization maps. Explicitly, for $x\in X(\overline{K})$ if $x$ is in the $G$-orbit $Y$ then set $\trop_G(x)$ to be the valuation $\gnor_{x}$ on $k(Y)$. Then for a subvariety $W\subseteq X$ we can again define $\trop_G(W)\colon =\trp_G(W(\overline{K}))$

\begin{rem}
In \cite{Nash1} Nash actually defines a \textit{colored tropicalization}. This consists of attaching the finite set of colors associated to a colored face, to the corresponding valuation cone. However we do not deal with the colors here so this was not discussed.
\end{rem}

We now prove Theorem \ref{IntroAthm}.  Recall that in Section 3 we showed that we have a retraction of topological spaces $\bfp\colon X^{\an}\rightarrow X^{\an}$, given by $\ast$-multiplication by the unique point of the Shilov boundary of $G^{\beth}$ (Corollary \ref{Retraction cor}).

\begin{tthmm}\label{Sec5thmA thm}
Let $X$ be a spherical $G$-variety over an algebraically closed and trivially valued field $k$, and let $\overline{K}$ denote Puiseux series in $k$. Then there is a canonical homeomorphism $\iota:\trop_G(X)\rightarrow \bfp(X^{\an})$, and $\bfp\colon X^{\an}\rightarrow \trop_G(X)$ is a retraction of topological spaces such that the following diagram commutes:
\[
\begin{tikzcd}
X(\overline{K}) \arrow[r] \arrow[dr, "\trp_G"'] & X^{\text{an}} \arrow[d, "\bfp"]  \\
 \, &  \trop_G(X).
 \end{tikzcd}
\]
\end{tthmm}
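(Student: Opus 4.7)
I will first define $\iota$ and verify its set-theoretic bijectivity together with the commutativity of the triangle; then reduce the homeomorphism claim to the simple case; and finally handle the simple case by comparing topologies via the $B$-semi-invariant decomposition of regular functions on $X_0$.

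To define $\iota$, for each colored cone $(\sigma_Y,\cF_Y)\in\fF(X)$ with associated $G$-orbit $Y$, I will send a valuation $\nu\in\cV(Y)\subseteq\trop_G(X)$ (which by definition is $G(k)$-invariant on $k(Y)$) to the point $(\eta_Y,\nu)\in X^{\an}$. By Proposition \ref{infinite Prop}, the image lies in $\bfp(X^{\an})$, and by Corollary \ref{orbit valuations cor} every point of $\bfp(X^{\an})$ arises uniquely in this way, so $\iota$ is a bijection of sets. For commutativity of the triangle: given $x\in X(\overline{K})$ mapping into an orbit $Y$ with reduction $x'\in Y$, Corollary \ref{orbit valuations cor} yields $\bfp((x,\gnor_u))=(\eta_Y,\gnor_{x'})$, and $\gnor_{x'}$ is by definition the element $\trp_G(x)\in\cV(Y)\subseteq\trop_G(X)$ per \cite{TV,Nash1}. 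The retraction statement for $\bfp\colon X^{\an}\to\trop_G(X)$ will then follow from Proposition \ref{cd prop}(1) and Corollary \ref{Retraction cor} once $\iota$ is a homeomorphism.

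To show $\iota$ is a homeomorphism, both $\trop_G(X)$ and $\bfp(X^{\an})$ decompose compatibly over the simple subembeddings $GX_0$ of Theorem \ref{fundopen Thm}, via Theorem \ref{godforsakenlemma2 Thm} on the combinatorial side and via $\rho$-continuity on the analytic side, so I reduce to $X$ simple. Assume then that $X$ is simple with closed orbit $Y_{\min}$, set $\sigma=\sigma_{Y_{\min}}(X)$, and let $X_0$ be the $B$-stable affine open of Theorem \ref{fundopen Thm}. For every orbit $Y$ in $X$ the closure $\overline{Y}$ meets $X_0$, so $\eta_Y\in X_0$; hence the subspace topology on $\bfp(X^{\an})$ is generated by the evaluation maps $\nu\mapsto\abs{f}_\nu$ as $f$ ranges over $k[X_0]$. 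On the other hand, by Theorem \ref{coloredconevals Thm} part 1 the $B$-semi-invariants in $k[X_0]$ correspond exactly to the characters in $\sigma^{\vee}\cap M$, and the topology on $\trop_G(X)\hookrightarrow\hom(\sigma^{\vee},\overline{\bR})$ is generated by evaluation on these characters. Continuity of $\iota$ is therefore immediate.

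For the opposite direction I must show that for an arbitrary $f\in k[X_0]$ the map $\nu\mapsto\abs{f}_\nu$ is continuous for the coarser topology on $\trop_G(X)$ generated only by $B$-semi-invariants; here the $G(k)$-invariance of $\nu$ is decisive. The $G$-submodule of $k(G/H)$ generated by $f$ is finite dimensional, and by reductivity of $G$ it decomposes into weight spaces spanned by $B$-semi-invariants $f_1,\ldots,f_r\in k(G/H)^{(B)}$. Applying the Cartesian basis argument from the proof of Lemma \ref{EqualityofmyvalandTheirval Lem} (via \cite[Subsection 2.6.2, Proposition 3]{BGR}) to the restrictions $f_i|_{\overline{Y}}$ and to the invariant valuation $\nu$ on $k(Y)$, I expect to obtain the identity $\abs{f}_\nu=\max_i\abs{f_i|_{\overline{Y}}}_\nu$, which is manifestly continuous for the $\trop_G(X)$ topology. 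The main obstacle is precisely this step, since the points $\nu$ of $\bfp(X^{\an})$ live on possibly different orbit closures, so convergence $\nu_n\to\nu$ may cross orbit boundaries and requires that the values $\abs{f_\chi}_\nu$ coherently send $-\log$ to $\infty$ for characters $\chi$ outside the appropriate face perp; encoding this is exactly the purpose of the ambient $\hom(\sigma^{\vee},\overline{\bR})$ topology.
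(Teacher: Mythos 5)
Your set-theoretic argument and the commutativity of the triangle match the paper exactly, and your identification of the easy topological direction (evaluations at $B$-semi-invariants in $k[X_0]$ generate both topologies on the nose) is the same observation the paper makes. However, you have the labels of the two directions swapped: what you call ``continuity of $\iota$ is therefore immediate'' is in fact openness of $\iota$ (equivalently continuity of $\iota^{-1}$), since a basic open of $\trop_G(X)$ is cut out by an evaluation $\ev_{\chi_f}$ with $f$ a $B$-semi-invariant, and this is the restriction of the continuous $\ev_f$ on $X^{\an}$. Continuity of $\iota$ itself is the hard direction, and you correctly describe it as such in your next paragraph despite the mislabel.

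Where you genuinely diverge from the paper is in how you treat the hard direction. You attempt to prove it directly, by decomposing the finite-dimensional $G$-module generated by $f\in k[X_0]$ into highest-weight pieces and arguing via an $\alpha$-Cartesian basis that $\abs{f}_\nu = \max_i \abs{f_i|_{\overline Y}}_\nu$ for $B$-semi-invariants $f_i$; this would show each $\ev_f$ is a continuous function of the $B$-semi-invariant evaluations. You honestly flag this as the main obstacle and only say you ``expect to obtain the identity,'' and indeed it is not established: you would need a uniform statement over all the strata $\cV(\tau)$, including the limiting behavior as $\nu$ degenerates across orbit boundaries to $\abs{f_\chi}_\nu\to 0$ for $\chi\notin\tau^\perp$, and the naive identity is also delicate because $f$ is not itself a combination of the highest-weight vectors $f_i$. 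The paper avoids all of this entirely: it proves only the easy open-bijection statement, then uses Sumihiro's theorem to embed $X$ $G$-equivariantly and openly into a proper spherical $\overline X$, notes that $\overline X^{\an}$ is compact and $\cI=\bfp(\overline X^{\an})$ is closed (being the fixed locus of the continuous idempotent $\bfp$), and concludes $\iota$ is a homeomorphism by the standard compact-to-Hausdorff argument, restricting back to $X$ through the open immersion $X^{\an}\hookrightarrow \overline X^{\an}$. That compactness trick is what lets the paper dodge the hard explicit estimate you are trying to prove; if you want to carry out your route you would have to actually prove the valuation-theoretic inequality (and its boundary behavior), which is a nontrivial additional piece of work not present in the paper.
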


\begin{proof}
First observe that as sets we have an inclusion $\iota:\trop_G(X)\hookrightarrow X^{\an}$ given as follows. For each orbit $Z$ of $X$, and $\gnor\in \cV(Z)$, $\gnor$ defines an element of $X^{\an}$ by:
\begin{equation*}
\gnor \mapsto (\eta_Z,\gnor)
\end{equation*}
where $\eta_Z$ is the generic point of $Z$. All the $G$-invariant subvarieties of a spherical variety are closures of $G$-orbits (\cite{Knop}); thus by Proposition \ref{infinite Prop} we have that $\bfp(X^{\an})=\iota(\trop_G(X))$ as sets. That the diagram commutes as maps of sets follows from Corollary \ref{orbit valuations cor}. Finally, $\bfp:X^{\an}\rightarrow \bfp(X^{\an})$ is continuous by Proposition \ref{cd prop}. So it remains to show that the topology on $\trop_G(X)$ agrees with the topology of $\bfp(X^{\an})$, i.e. that $\iota$ is a homeomorphism onto its image. We will need to temporarily emphasize the difference between $\trop_G(X)$ and its image in $X^{\an}$. Let  $\cI$ be the set $\iota(\trop_G(X))$, and give $\cI\subseteq X^{\an}$ the subspace topology.

Consider the case when $X$ is simple with closed orbit $Y$. Let $X_0$ be the $B$-stable affine open defined by equation \ref{bestopen Eqn}. The open $X_0$ meets every $G$-orbit in $X$ by Theorem \ref{fundopen Thm}. Thus $X_0^{\an}$ contains $\cI$. The topology on $X_0^{\an}$ is generated by the functions $\ev_f\colon k[X_0]^{\an}\rightarrow \overline{\bR}$, where $\ev_f(\gnor)= \abs{f}$, $f\in k[X_0]$. The functions $\ev_f$ for $f\in k[X_0]^{(B)}$ define the topology on $\trop_G(X)$ by part 1 of Theorem \ref{coloredconevals Thm}. So the inclusion $\iota\colon \trop_G(X)\rightarrow \cI$ is open. Given that for a general $G/H$-embedding $\trop_G(X)$ is given by gluing the tropicalizations of simple subembeddings we have an open bijection $\iota\colon \trop_G(X)\rightarrow \cI$ for all $X$.

Consider the case then when $X$ is proper, then $X^{\an}$ is compact. We know $\cI$ is a closed subset of $X^{\an}$, and therefore $\cI$ is compact. Thus $\iota\colon \trop_G(X)\rightarrow \cI$ is a homeomorphism onto its image. Then for a general $X$ there exists a proper spherical $G$-variety $\overline{X}$ and an open $G$-equivariant embedding $X\hookrightarrow \overline{X}$ by \cite[Theorem 3]{Sumihiro}. But then we have a commutative diagram:  \[ 
\begin{tikzcd}
X^{\an} \arrow[hookrightarrow,r]& \overline{X}^{\an} \\
\trop_G(X) \arrow[u,hookrightarrow] \arrow[hookrightarrow,r]&  \trop_G(\overline{X}) \arrow[u,hookrightarrow] \\
\end{tikzcd}
    \] 
where the composition $\trop_G(X)\hookrightarrow \trop_G(\overline{X})\hookrightarrow \overline{X}^{\an}$ is a homeomorphism onto its image by the above. Furthermore the map $X^{\an}\hookrightarrow \overline{X}^{\an}$ is an open embedding so the inclusion  $\trop_G(X)\hookrightarrow X^{\an}$ is a homeomorphism onto $\cI$.
\end{proof}

\begin{rem}
In \cite{KM} it is shown that tropicalization factors through $X^{\an}$ when $X$ is affine. It follows from Corollary \ref{orbit valuations cor} that $\bfp$ and the map $X^{\an}\rightarrow \trop_G(X)$ given in Definition 5.18 of \cite{KM} agree.
\end{rem}

\begin{cor}
For $W$ a subvariety of $X$, $\trop_G(W)$ is a strong deformation retract of $W^{\an}$.
\end{cor}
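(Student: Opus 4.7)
The plan is to restrict the strong deformation retraction $H\colon [0,1]\times X^{\an}\to X^{\an}$ from Proposition \ref{def retract prop}, given by $H(\lambda,p)=\bfg_\lambda\ast p$, to $[0,1]\times W^{\an}$, identifying its time-$1$ target with $\trop_G(W)$ via the homeomorphism $\iota$ of Theorem \ref{Sec5thmA thm}. I would then check the three defining properties of a strong deformation retraction, all of which are essentially immediate from what is already in place. First, $H(0,p)=\bfg_0\ast p=p$, since $\bfg_0$ is the identity element $e\in G$. Second, $H(1,p)=\bfp(p)$ lies in $\bfp(W^{\an})=\iota(\trop_G(W))$ by Theorem \ref{Sec5thmA thm}. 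Third, for any $q\in\iota(\trop_G(W))$ one can write $q=\bfp(p')$ for some $p'\in W^{\an}$; the identity $\bfg_\lambda\ast\bfg=\bfg$ verified in the proof of Proposition \ref{def retract prop}, combined with associativity (Proposition \ref{cd prop}(2)), then gives $\bfg_\lambda\ast q=(\bfg_\lambda\ast\bfg)\ast p'=\bfg\ast p'=q$, so $H$ fixes $\iota(\trop_G(W))$ pointwise throughout the homotopy. Continuity of the restriction is inherited from continuity of $H$.

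The main subtlety is choosing the appropriate ambient space in which the retraction takes place. Because $W$ need not be $G$-invariant, the intermediate points $\bfg_\lambda\ast p$ for $p\in W^{\an}$ and $\lambda\in(0,1)$ can have scheme-theoretic support strictly larger than that of $p$, and thus may leave $W^{\an}$ when viewed as a subset of $X^{\an}$. The resolution is to interpret the statement inside the closed subspace of $X^{\an}$ containing $W^{\an}$ together with $\iota(\trop_G(W))$: by Proposition \ref{cd prop}(1) every trajectory $\lambda\mapsto\bfg_\lambda\ast p$ is continuous, and by continuity of $\bfp$ and the description of $\trop_G(W)$ as the closure of $\trp_G(W(\overline{K}))$ (Section \ref{tropicalization section}) the whole trajectory lies in this closed subspace. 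The genuinely nontrivial input is the identification $\bfp(W^{\an})=\iota(\trop_G(W))$ already supplied by Theorem \ref{Sec5thmA thm}; the present corollary is then essentially a matter of packaging that identification together with Proposition \ref{def retract prop}.
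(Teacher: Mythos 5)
Your approach matches the paper's, which is essentially a one-line proof: $\trop_G(W)=\bfp(W^{\an})$, then invoke Proposition \ref{def retract prop}. You have correctly put your finger on the genuine subtlety that the paper glosses over, namely that when $W$ is not $G$-invariant the homotopy $H(\lambda,p)=\bfg_\lambda\ast p$ does not map $[0,1]\times W^{\an}$ into $W^{\an}$, and indeed $\iota(\trop_G(W))$ is generally not a subspace of $W^{\an}$ at all (its points sit over generic points of $G$-orbits, not of $W$). That observation is to your credit.

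However, your proposed resolution does not hold up. You claim the whole trajectory $\lambda\mapsto\bfg_\lambda\ast p$ lies in the closed subspace $W^{\an}\cup\iota(\trop_G(W))$. This is false for $\lambda\in(0,1)$. For such $\lambda$ the scheme point $\rho(\bfg_\lambda\ast p)$ is already the generic point of an orbit closure (so the point leaves $W^{\an}$), yet $\bfg_\lambda\ast p$ is not a $G(k)$-invariant valuation (so it is not in $\bfp(X^{\an})$, hence not in $\iota(\trop_G(W))$). One sees this even in the simplest example $G=T=\mathbb{G}_m$ acting on itself with $W=\{1\}$: there $\bfg_\lambda\ast 1$ is the Gauss-type point $\sum a_it^i\mapsto\max_i\lambda^{|i|}$, which for $\lambda\in(0,1)$ is neither the closed point $1$ nor the trivial valuation $\bfg$. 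Continuity of the trajectory and of $\bfp$ gives no reason for intermediate points to lie in a union of the two endpoints. The honest reading of the corollary is the one the paper implicitly takes: the homotopy $H|_{[0,1]\times W^{\an}}$ is valued in $X^{\an}$, starts at the inclusion, ends at $\bfp|_{W^{\an}}$ whose image is $\iota(\trop_G(W))$, and is stationary on $\iota(\trop_G(W))$ by $\bfg_\lambda\ast\bfg=\bfg$; so ``strong deformation retract'' is to be understood relative to the ambient space $X^{\an}$ (or, if one wants a self-contained space, the image $H([0,1]\times W^{\an})$), not internally to $W^{\an}$.
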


\begin{proof}
We have that $\trop_G(W)=\bfp(W^{\an})$ so this follows from Proposition \ref{def retract prop}.
\end{proof}

\begin{rem}
Notice that as a result of Proposition \ref{cd prop} one has that $\trop_G$ defines a functor from the category of spherical $G$-varieties to the category of topological spaces. One can describe the morphism as follows. If $X\rightarrow X'$ is a morphism of $G$-varieties, then for a $Z\subseteq X$ a $G$-orbit, $Z$ maps dominantly onto a $G$-orbit $Z'\subseteq X'$. Then $\gnor \in \cV(Z)$ is mapped to its restriction to $k(Z')$ under the inclusion $k(Z')\hookrightarrow  k(Z)$. This is the same as the construction of tropicalization of morphisms give in Section 2 of \cite{Nash2}.
\end{rem}

From Theorem \ref{Sec5thmA thm} we can see that tropicalization did not depend on our choice to use Puiseux series. Let $L/k$ be an algebraically closed, valued extension, with nontrivial valuation. Define $\trp_{G,L}\colon X(L)\rightarrow \cV$ to be the map given $x\mapsto \gnor_x$, where $x$ is contained in some $G$-orbit $Y$ and $\gnor_x$ is the valuation on $k(Y)$ defined in \ref{subscript val on orbit eqn}. 

\begin{cor}
The closure of $\trp_{G,L}(W(L))$ in $\trop_G(X)$ is equal to $\bfp(W^{\an})=\trop_G(W)$, where $W^{\an}$ is considered as a subspace of $X^{\an}$ induced by the inclusion $W\subseteq X$.
\end{cor}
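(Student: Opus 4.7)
The plan is to deduce the corollary from Theorem \ref{Sec5thmA thm}. First, observe that the commutative diagram of that theorem extends verbatim to any valued extension $L/k$: both Lemma \ref{EqualityofmyvalandTheirval Lem} and Corollary \ref{orbit valuations cor} are stated for an arbitrary valued $L/k$, so the same argument yields $\bfp(p_x) = \iota(\trp_{G,L}(x))$ for every $x \in X(L)$, where $p_x \in X^{\an}$ is the associated Berkovich point. Restricting to $W$ identifies $\trp_{G,L}(W(L))$ with $\bfp(\jmath_L(W(L)))$ (viewed in $\trop_G(X)$ via $\iota$), where $\jmath_L \colon W(L) \to W^{\an}$ is the tautological map.

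Since $\bfp(W^{\an}) = \trop_G(W)$ by the preceding corollary, and $\trop_G(W)$ is closed in $\trop_G(X)$ by definition (it is a closure), the inclusion $\trp_{G,L}(W(L)) \subseteq \bfp(W^{\an})$ already gives $\overline{\trp_{G,L}(W(L))} \subseteq \bfp(W^{\an})$. For the reverse containment, it suffices to show that $\jmath_L(W(L))$ is dense in $W^{\an}$; once this is known, continuity of $\bfp$ (Proposition \ref{cd prop}) yields
\[
\bfp(W^{\an}) \;=\; \bfp\!\left(\overline{\jmath_L(W(L))}\right) \;\subseteq\; \overline{\bfp(\jmath_L(W(L)))} \;=\; \overline{\trp_{G,L}(W(L))},
\]
completing the argument.

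The main obstacle is thus the density of $\jmath_L(W(L))$ in $W^{\an}$. Under our hypotheses, $L$ algebraically closed with nontrivial valuation forces its value group to be divisible and dense in $\bR_{>0}$, and a standard argument in Berkovich geometry then shows that on any affine chart $\spec A \subseteq W$, every multiplicative seminorm on $A$ can be approximated in the Berkovich topology by point evaluations $f \mapsto |f(a)|_L$ with $a \in W(L)$. This density is a routine-if-technical consequence of the definition of the Berkovich topology together with the hypotheses on $L$, and once in hand the rest of the proof is purely formal.
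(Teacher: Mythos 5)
Your proof is correct and takes essentially the same approach as the paper: both rely on (i) the commutative diagram from the main theorem extended to arbitrary valued extensions $L/k$, (ii) density of $W(L)$ in $W^{\an}$, and (iii) continuity of $\bfp$ to pass to closures, with the paper simply asserting (i) and (ii) more tersely than you spell them out.
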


\begin{proof}
For a subvariety $V\subseteq X$, $W(L)$ is a dense subset of $W^{\an}$, and we have a commutative diagram
\[
\begin{tikzcd}
X(L) \arrow[r] \arrow[dr, "\trp_{G,L}"] & X^{\text{an}} \arrow[d, "\bfp"]  \\
 \, &  \trop_G(X).
 \end{tikzcd}
\] So the closure of $\trp_G(W(L))$ in $\trop_G(X)$ for any subvariety $W\subseteq X$ must be equal to $\bfp(W^{\an})$.
\end{proof}

In \cite{Nash2} Nash gives a description of tropicalization by embedding $X$ into a toric variety and then applying the usual tropicalization process for toric varieties. See \cite[Section 6]{Nash2} for details. In particular this process is used to prove the following result, for which we can give an alternate proof.

\begin{thm}\cite[Theorem 6.7]{Nash2}
If $W\subseteq G/H$ is a subvariety, and $\overline{W}$ is the closure of $W$ in $X$. Then we have that:
\[
\trop_G(\overline{W})=\overline{\trop_G(W)}
\]
where $\overline{\trop_G(W)}$ is the closure taken in $\trop_G(X)$.
\end{thm}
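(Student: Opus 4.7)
The strategy is to translate the equality into a statement about Berkovich analytification via Theorem \ref{Sec5thmA thm}, then combine continuity of $\bfp$ with a compactification argument. By Theorem \ref{Sec5thmA thm} and its corollary (using Puiseux series as the nontrivially valued extension $L$), we have $\trop_G(W) = \bfp(W^{\an})$ and $\trop_G(\overline{W}) = \bfp(\overline{W}^{\an})$ as subsets of $\trop_G(X) \subseteq X^{\an}$, so the theorem reduces to the equality
\[
\bfp(\overline{W}^{\an}) = \overline{\bfp(W^{\an})}^{\trop_G(X)}.
\]

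For the inclusion $\subseteq$, I would first record the standard fact that since $W$ is Zariski dense in $\overline{W}$, the open subspace $W^{\an}$ is dense in $\overline{W}^{\an}$ (using that points of $\overline{W}^{\an}$ arising from $\overline{W}(\overline{K})$-points are dense, and such points can be approximated by $W(\overline{K})$-points). Continuity of $\bfp$ from Proposition \ref{cd prop} then immediately gives $\bfp(\overline{W}^{\an}) \subseteq \overline{\bfp(W^{\an})}$, and since $\bfp(\overline{W}^{\an}) \subseteq \trop_G(X)$ this is the desired inclusion.

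For the reverse inclusion I would use a compactification. By Sumihiro's theorem, as invoked in the proof of Theorem \ref{Sec5thmA thm}, there is a proper spherical $G$-variety $\overline{X}$ into which $X$ embeds as a $G$-invariant open subvariety. Let $\widetilde{W}$ denote the closure of $W$ in $\overline{X}$; then $\widetilde{W}^{\an}$ is closed in the compact space $\overline{X}^{\an}$ and hence compact, so $\bfp(\widetilde{W}^{\an})$ is compact and therefore closed in the Hausdorff space $\trop_G(\overline{X})$. Thus $\overline{\bfp(W^{\an})}^{\trop_G(\overline{X})} \subseteq \bfp(\widetilde{W}^{\an})$. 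The open immersion $X^{\an} \hookrightarrow \overline{X}^{\an}$ makes $\trop_G(X)$ an open subspace of $\trop_G(\overline{X})$, so for $q \in \overline{\bfp(W^{\an})}^{\trop_G(X)}$ we find $p \in \widetilde{W}^{\an}$ with $\bfp(p)=q$.

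The main obstacle is then to upgrade this to $p \in \overline{W}^{\an} = \widetilde{W}^{\an} \cap X^{\an}$, and this is where the spherical structure intervenes. Writing $p = (x, \gnor)$ with $x \in \widetilde{W}$, Proposition \ref{imageofgstar Prop} gives $\bfp(p) = (\eta_Y, \overline{\gnor})$ where $Y$ is the closure in $\overline{X}$ of the image of the composition $G \times \spec k(x) \to G \times \overline{X} \to \overline{X}$. Since $q \in \trop_G(X)$, the generic point $\eta_Y$ lies in $X$. Because $G$ is connected, the image $G \cdot x$ is irreducible, and since $X$ is $G$-invariant and open in $\overline{X}$, we have the dichotomy $G \cdot x \subseteq X$ or $G \cdot x \cap X = \emptyset$. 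The first alternative must hold since $\eta_Y \in X \cap Y$ and $G \cdot x$ is dense in $Y$, and therefore $x \in X$, giving $p \in \overline{W}^{\an}$. This completes the reverse inclusion and the proof.
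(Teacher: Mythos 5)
Your proof is correct, and the overall strategy — reduce everything to the map $\bfp$ via Theorem \ref{Sec5thmA thm} and use continuity of $\bfp$ for the easy inclusion — is exactly what the paper's one-line proof is invoking. What you add is a careful compactification argument (pass to a proper $G$-variety $\overline X$ via Sumihiro, note that $\bfp(\widetilde W^{\an})$ is compact hence closed, then use $G$-invariance of the open subset $X\subseteq\overline X$ to pull the witness $p$ back into $\overline W^{\an}$). This is the step the paper does not spell out, and your dichotomy argument ($m^{-1}(X)=G\times X$ since $X$ is $G$-stable and open, hence $m_x^{-1}(X)$ is all or nothing) is a clean way to carry it out. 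Two small remarks. First, there is a slightly shorter route once one grants the stated corollary of Theorem \ref{Sec5thmA thm}: that corollary asserts that $\bfp(W^{\an})$ is itself the closure of $\trp_{G}(W(\overline K))$ in $\trop_G(X)$, hence already closed; from there the reverse inclusion $\trop_G(\overline W)=\bfp(\overline W^{\an})\subseteq\overline{\bfp(W^{\an})}=\bfp(W^{\an})=\overline{\trop_G(W)}$ follows at once from density of $W^{\an}$ in $\overline W^{\an}$ and continuity of $\bfp$, with no need to return to $\overline X$. Second, and in the other direction, your compactification argument has the merit of \emph{establishing} that closedness rather than taking it on faith, since the corollary's own brief justification does not explicitly show $\bfp(W^{\an})$ is closed; in that sense your proof is self-contained in a way that a literal reading of the paper's argument is not. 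Either way the content is sound.
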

\begin{proof}
This follows from Theorem \ref{Sec5thmA thm}, as $\bfp$ is a retraction.
\end{proof}

\section{Compactification of the colored fan}\label{Generalizing Thuillier sec}
Recall from subsection \ref{beth spaces subsec} that when working over a trivially valued field $X^{\an}$ has a compact analytic subdomain $X^{\beth}$. The space $X^{\beth}$ is defined as the subspace whose points are pairs $(x,\gnor)\in X^{\an}$, where $\gnor$ has center in $X$. In this Section we study $\bfp(X^{\beth})$ and prove Theorem \ref{IntroBthm}. In the case of toric varieties it is shown in \cite[Section 2]{Thuillier} that $\bfp(X^{\beth})$ is the canonical compactification of the fan associated to $X$. We begin by reviewing the work of Thuillier on the toric case.

Let $X$ be a toric variety over $k$ with dense torus $T$. Let $F(X)$ denote the fan associated to $X$. Let $\sigma$ be a cone in $F(X)$ and $U_{\sigma}$ the $T$-stable open affine associated to $\sigma$. We will assume all the cones of $F(X)$ are strictly convex. In \cite{Thuillier} Thuillier defines a map of topological spaces $X^{\beth}\rightarrow X^{\beth}$; which is equal to the map $\bfp$ given by $\ast$-mutliplication by the unique point in the Shilov boundary of $T^{\beth}$. Let $\sigma^{\vee}$ denote positive linear functionals on $\sigma$. Thuillier shows that $\rho^{-1}(T)\cap \bfp(U_{\sigma}^{\beth})$ is naturally homeomorphic to $\hom(\sigma ^{\vee},[0,\infty))$, where $\hom$ denotes semigroup homomorphisms that preserve scaling by $\bR_{\geq0}$ and $[0,\infty)$ has the additive semigroup structure. The space $\hom(\sigma^{\vee},[0,\infty))$ is naturally isomorphic to $\sigma$, as $\sigma$ is strictly convex. Furthermore $\bfp(U^{\beth})$ is compact, and in fact is canonically homeomorphic to $\hom(\sigma^{\vee},[0,\infty])$, the \textbf{canonical compactification} of $\sigma$. We will denote the canonical compactification of a strictly convex cone by $\overline{\sigma}$. In this way $\bfp(X^{\beth})$ is the canonical compactification of the fan $F(X)$.

\begin{rem}\label{star cond rem}
Let $X$ be a spherical $G$-variety with dense $G$-orbit $G/H$. Observe that $\bfp$ has image consisting only of $G(k)$-invariant valuations, and colored fans may contain points of $N_{\bR}$ lying outside $\cV$. Consider Example \ref{star cond example}. Because of this fact we can give a more concise description of $\bfp(X^{\beth})$ when the following holds:
\begin{equation*}\label{support Eqn}
    \bigcup_{(\sigma,\cF)\in\fF(X)}\sigma\subseteq \cV \tag{$\star$}
\end{equation*}
Horospherical varieties, for example, satisfy this condition. However for a symmetric homogeneous space $G/H$ there will be colors whose image in $N_{\bR}$ lies outside $\cV$ and thus symmetric varieties may not satisfy the above condition (see \cite{VustSymmetric} or \cite[Section 3.4]{Perrin}). In Remark \ref{description when star not satisfied rem} we discuss how to describe $\bfp(X^{\beth})$ when \ref{support Eqn} is not satisfied.
\end{rem}

\begin{ex}\label{star cond example}
    As in \cite[Example 4.4]{Nash1}, let $G=\text{GL}_2\times\text{GL}_2$ act on $\text{GL}_2$ via $(g,h)\cdot x= gxh^{-1}$. Note that $\text{GL}_2\cong G/H$ where $H$ is the diagonal subgroup. Let $B$ be the Borel subgroup given by elements of the form $(b,b')$ where $b$ is an upper triangular matrix and $b'$ is lower triangular; then $B$ has an open orbit in $\text{GL}_2$. Let $X$ be the $\text{GL}_2$-embedding given by $2\times 2$ matrices over $k$, and let $X'$ be the $\text{GL}_2$-embedding given by $2\times 2$ matrices of rank at least one over $k$. The data associated to $\text{GL}_2$ and the colored cones associated to $X$ and $X'$ are depicted in Figure \ref{GL_2 fig}. In particular there is a single color, $D$, given by matrices where the bottom right entry is 0. Neither variety is horospherical \cite[Corollary 6.2]{Knop} though both are symmetric varieties and  \ref{support Eqn} does not hold for $X$ but it does for $X'$.
\end{ex}

\begin{figure}[!ht]
\centering
\scalebox{0.7}{
\begin{tikzpicture}
      \node (A) at ( -2.5, -2.5) {};
        \node (B) at (2.5, 2.5) {};
        \node (C) at ( 2.5, -2.5) {};
        \fill [pink] (A.center) -- (B.center) -- (C.center) -- cycle;
       \draw [dashed]  (-2.5,0) -- (2.5,0);
      \draw [dashed] (0,-2.5) -- (0,2.5);
      \filldraw [VioletBlue] (-1.25,1.25) circle [radius=0.1];
      \draw (2.25,1.25) node [DeepPink] {$\cV$};
\end{tikzpicture}
}
\hspace{1.5cm}
\scalebox{0.7}{
\begin{tikzpicture}
      \node (A) at ( -2.5, -2.5) {};
        \node (B) at (2.5, 2.5) {};
        \node (C) at ( 2.5, -2.5) {};
        \fill [pink] (A.center) -- (B.center) -- (C.center) -- cycle;
       \draw [dashed] (-2.5,0) -- (2.5,0);
      \draw [dashed] (0,-2.5) -- (0,2.5);
      \filldraw [VioletBlue] (-1.25,1.25) circle [radius=0.1];
      \draw [arrow] (0,0) -- (-2.5,2.5);
      \draw [arrow] (0,0) -- (2.5,0);
      \node (D) at ( 0,0) {};
        \node (E) at (2.5, 0) {};
        \node (F) at (2.5,2.5) {};
        \node (G) at (-2.5,2.5) {};
        \fill [lightgray, fill opacity=0.5] (D.center) -- (E.center) -- (F.center) -- (G.center) -- cycle;
\end{tikzpicture}
}
\hspace{1.5cm}
\scalebox{0.7}{
\begin{tikzpicture}
      \node (A) at ( -2.5, -2.5) {};
        \node (B) at (2.5, 2.5) {};
        \node (C) at ( 2.5, -2.5) {};
        \fill [pink] (A.center) -- (B.center) -- (C.center) -- cycle;
       \draw [dashed] (-2.5,0) -- (2.5,0);
      \draw [dashed] (0,-2.5) -- (0,2.5);
      \filldraw [VioletBlue] (-1.25,1.25) circle [radius=0.1];
      \draw [arrow] (0,0) -- (2.5,0);
\end{tikzpicture}
}
\caption{To the left is the vector space $N_{\bR}$, the valuation cone  $\cV$, and the one color $D$, for the homogeneous space $\text{GL}_2$. The blue circle depicts $\varrho(\gnor_D)$, and the shaded triangle depicts $\cV$. In the middle is the colored cone for the $\text{GL}_2$-embedding $X$. To the right is the colored cone for $X'$.} \label{GL_2 fig}
\end{figure}
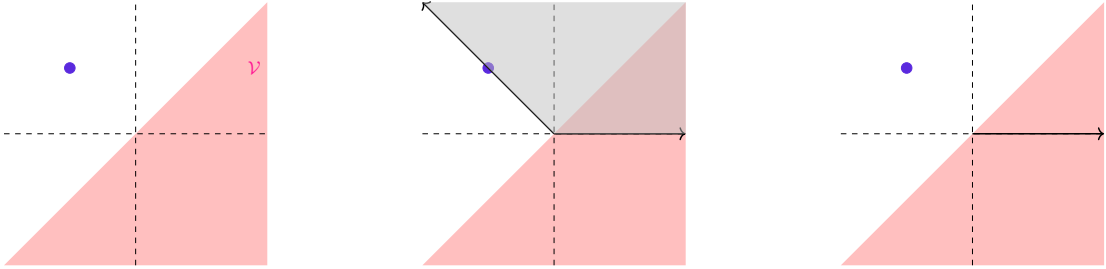

\begin{lem}\label{ConditionDropsToOrbits Lem}
Let $X$ be a $G/H$-embedding satisfying \ref{support Eqn}. Let $Z$ be a $G$-orbit of $X$, and $\overline{Z}$ be the closure of $Z$ in $X$. Then the $Z$-embedding $\overline{Z}$ satisfies \ref{support Eqn}.
\end{lem}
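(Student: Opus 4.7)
The plan is to reduce the statement directly to the description of $\fF(\overline{Z})$ as a $\text{Star}$ and to the fact that the projection $\varphi_*$ sends the valuation cone of $G/H$ onto the valuation cone of $Z$. The claim should then follow immediately by taking the image of the inclusion $\sigma \subseteq \cV$ under $\varphi_*$.

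More precisely, first I would invoke Theorem \ref{godforsakenlemma2 Thm} to identify $\fF(\overline{Z})$ with $\text{Star}(\tau)$, where $\tau = \sigma_Z(X)$. By definition of $\text{Star}(\tau)$, every cone appearing in $\fF(\overline{Z})$ has the form $\varphi_*(\sigma)$ for some colored cone $(\sigma,\cF) \in \fF(X)$ admitting $(\tau,\cF_Z(X))$ as a colored face, where $\varphi: G/H \to Z$ is the orbit map $gH \mapsto gz$ and $\varphi_*: N_{\bR} \to N_{\bR}(Z)$ is the induced projection.

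Next, I would use the hypothesis $(\star)$ on $X$: every such $\sigma$ satisfies $\sigma \subseteq \cV$. Applying $\varphi_*$ and using the equality $\varphi_*(\cV) = \cV(Z)$ from Theorem \ref{coloredsubspace Thm}, I conclude
\[
\varphi_*(\sigma) \subseteq \varphi_*(\cV) = \cV(Z).
\]
Taking the union over all colored cones of $\fF(\overline{Z})$ gives
\[
\bigcup_{(\sigma',\cF')\in \fF(\overline{Z})}\sigma' \subseteq \cV(Z),
\]
which is precisely condition $(\star)$ for the $Z$-embedding $\overline{Z}$.

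There is not really a hard step here: the lemma is essentially a compatibility check between the $\text{Star}$ construction and the projection $\varphi_*$ restricted to valuation cones. The only thing to be careful about is matching notation between Theorem \ref{coloredsubspace Thm} (where $\varphi_*$ is described as the projection $N \to N(\tau)$ with $N(\tau) = N(Z)$) and the definition of $\text{Star}(\tau)$, so that the resulting cones in $\fF(\overline{Z})$ are literally the images of cones in $\fF(X)$ containing $\tau$ as a face. Once this identification is made, the inclusion is a one-line consequence of functoriality of $\varphi_*$.
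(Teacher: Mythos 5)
Your proof is correct and follows essentially the same route as the paper: both arguments identify the cones of $\fF(\overline{Z})$ as images $\varphi_*(\sigma)$ via Theorem \ref{godforsakenlemma2 Thm} and then push the inclusion $\sigma\subseteq\cV$ forward using $\varphi_*(\cV)=\cV(Z)$ from Theorem \ref{coloredsubspace Thm}. The only cosmetic difference is that the paper first reduces without loss of generality to the case of a simple embedding, whereas you handle all cones of $\text{Star}(\tau)$ at once.
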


\begin{proof}
Without loss of generality we may assume $X$ is simple. Denote the closed orbit by $Y$. Let $\tau=\sigma_Z(X)$, the face of $\sigma_Y(X)$ corresponding to $Z$. For $z\in Z$ let $\varphi\colon G/H\rightarrow Z$ be the map given by $gH\mapsto gz$, and let $\varphi_*\colon  N_{\bR}\rightarrow N_{\bR}(\tau)$ be the induced map. It follows from Theorem \ref{godforsakenlemma2 Thm} that $\varphi_*(\sigma_Y(X))=\sigma_Y(\overline{Z})$, and we know from Theorem \ref{coloredsubspace Thm} that $\varphi_*(\cV)=\cV(Z)$. So $\sigma_Y(\overline{Z})\subseteq \cV(\tau)$.
\end{proof}

Because colored cones correspond to simple embeddings, we will first consider simple embeddings. Let $X$ be simple with closed $G$-orbit $Y$.  Let $\sigma=\sigma_Y(X)$. We begin by defining a map $\Phi\colon \bfp(X^{\beth})\rightarrow \overline{\sigma}$. The set $\rho^{-1}(G/H)\cap \bfp(X^{\beth})$ consists of $G(k)$-invariant valuations on $k(G/H)$ having center in $X$. Given such a valuation $\gnor$ we know it uniquely defines a semigroup homomorphism $\varrho(\gnor)\colon M\rightarrow \bR$ by:
\[
\varrho(\gnor)(\chi_f)=-\log(\abs{f}).
\]
If $p=(\eta_{G/H},\gnor)\in X^{\beth}$  then denote the restriction of $\varrho(\gnor)$ to $\sigma^{\vee}$ by $\phi_{p}$. By Theorem \ref{coloredconevals Thm} part 2, we have that $\phi_p$  takes values in $[0,\infty)$. Thus we have a map of sets $\rho^{-1}(G/H)\cap \bfp(X^{\beth})\rightarrow \hom(\sigma^{\vee}, [0,\infty))$ given by $p\mapsto \phi_p$.

A general point of $\bfp(X^{\beth})$ is a point $p=(\eta_Z,\gnor)$ where $\eta_Z$ is the generic point of a $G$-orbit $Z$ and $\gnor$ is a $G(k)$-invariant valuation having center in $\overline{Z}$, where $\overline{Z}$ is the Zariski closure of $Z$ in $X$. Let $\tau$ be the face of $\sigma$ corresponding to $Z$. Then $\gnor$ defines an element of $\hom(M(\tau),\bR)=\hom(\tau^{\perp}\cap\sigma^{\vee} ,\bR)$. This extends uniquely to a morphism of semigroups $\phi_{p}\colon \sigma^{\vee}\rightarrow \overline{\bR}$ by setting $\phi_{p}(u)=\infty$ when $u\notin \tau^{\perp}$. But by Theorem \ref{coloredsubspace Thm} and \ref{godforsakenlemma2 Thm} we have that $\tau^{\perp}\cap\sigma^{\vee}=\sigma_Y(\overline{Z})$, thus by Theorem \ref{coloredconevals Thm} part 2 we see that $\phi_{p}$ only takes values in $[0,\infty]$. So we have a map $\Phi\colon \bfp(X^{\beth})\rightarrow \overline{\sigma}$, given by $\Phi(p)= \phi_{p}$.

\begin{prop}\label{mainprop sec 6}
Let $X$ be a simple $G/H$-embedding corresponding to the colored cone $(\sigma,\cF)$, and satisfying \ref{support Eqn}. Then there is a commutative diagram:
 \[ 
\begin{tikzcd}
\bfp(X^{\beth}) \arrow[r,"\sim"]& \overline{\sigma} \\
\bfp(X^{\beth})\cap \rho^{-1}(G/H) \arrow[u,hookrightarrow] \arrow[r,"\sim"]& \sigma \arrow[u,hookrightarrow] \\
\end{tikzcd}
    \] 
    where the horizontal arrows are homeomorphisms and are given by $\Phi$.
\end{prop}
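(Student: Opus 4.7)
The plan is to identify both sides along their natural stratifications by faces of $\sigma$/orbits of $X$, show $\Phi$ is a continuous bijection that respects these stratifications, and then upgrade to a homeomorphism via a compactness argument. Both spaces admit natural decompositions: $\overline{\sigma} = \Hom(\sigma^{\vee},[0,\infty])$ is a disjoint union, over the faces $\tau$ of $\sigma$, of the locally closed subsets $\overline{\sigma}_\tau := \{\phi : \phi^{-1}(\infty) = \sigma^{\vee}\setminus \tau^\perp\}$; each $\overline{\sigma}_\tau$ is canonically the relative interior of the face of $\overline{\sigma}$ corresponding to $\tau$. By Proposition \ref{infinite Prop}, $\bfp(X^{\beth})$ decomposes as $\bigsqcup_Z \{(\eta_Z,\gnor) : \gnor \text{ is $G(k)$-invariant on } k(Z) \text{ with center in } \overline{Z}\}$, where $Z$ runs over $G$-orbits of $X$.

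The first step is to set up the bijection between strata. Under \ref{support Eqn}, every face $\tau$ of $\sigma$ meets $\cV$ in its relative interior, so by Lemma \ref{orbitcone Lem} every face $\tau$ corresponds to a $G$-orbit $Z_\tau$ and vice versa. Fix such a $\tau$ and the corresponding orbit $Z = Z_\tau$. By Theorem \ref{coloredsubspace Thm}, $N_{\bR}(\tau) = N_{\bR}(Z)$ and $\varphi_*(\cV) = \cV(Z)$; by Theorem \ref{godforsakenlemma2 Thm}, the closure $\overline{Z}$ is a simple $Z$-embedding whose defining cone $\sigma_Y(\overline{Z})$ is the image of $\sigma$ under the projection $N_{\bR}\to N_{\bR}(\tau)$. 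By Lemma \ref{ConditionDropsToOrbits Lem}, $\overline{Z}$ again satisfies \ref{support Eqn}, so $\sigma_Y(\overline{Z}) \subseteq \cV(Z)$. Applying Theorem \ref{coloredconevals Thm} part 2 to $\overline{Z}$, a $G(k)$-invariant valuation $\gnor$ on $k(Z)$ has center in $\overline{Z}$ if and only if $\varrho(\gnor)$ lies in $\sigma_Y(\overline{Z})$. Since $\varrho$ is injective on $\cV(Z)$ by \cite[Corollary 1.8]{Knop}, and dualizing a point of the relative interior of $\tau$'s image gives precisely a semigroup homomorphism $\sigma^\vee \to [0,\infty]$ that is finite exactly on $\tau^\perp \cap \sigma^\vee$, the map $\Phi$ restricts to a bijection $\bfp(X^{\beth})\cap \rho^{-1}(Z) \to \overline{\sigma}_\tau$. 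Taking $\tau = 0$ (when $Z = G/H$) gives the bottom row; assembling all faces gives the top row as a bijection.

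The second step is continuity of $\Phi$. The topology on $\overline{\sigma} \subseteq [0,\infty]^{\sigma^\vee}$ is the subspace topology from the product, so it suffices to show that for each $u \in \sigma^\vee$ the coordinate $p \mapsto \phi_p(u)$ is continuous on $\bfp(X^{\beth})$. Choose $f \in k(G/H)^{(B)}$ with $\chi_f = u$; by Theorem \ref{coloredconevals Thm} part 1, $f \in k[X_0]$. By Theorem \ref{fundopen Thm}, $X_0$ meets every $G$-orbit, so every point $(\eta_Z,\gnor) \in \bfp(X^{\beth})$ lies in $X_0^{\beth}$, and $\phi_p(u) = -\log\abs{f}_p$ is the composition of the continuous evaluation $\ev_f$ on $X_0^{\beth}$ with the continuous map $-\log : [0,1] \to [0,\infty]$, restricted to $\bfp(X^{\beth})$.

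For the final step, note that $X^{\beth}$ is compact by Proposition \ref{charcterization of xbeth prop} and $\bfp$ is continuous by Proposition \ref{cd prop}, so $\bfp(X^{\beth})$ is compact; $\overline{\sigma}$ is Hausdorff as a subspace of $[0,\infty]^{\sigma^\vee}$. A continuous bijection from a compact space to a Hausdorff space is a homeomorphism, which gives the top row. The bottom row then follows by restriction, since the bijection above sends $\bfp(X^{\beth})\cap\rho^{-1}(G/H)$ precisely onto $\overline{\sigma}_0 = \sigma$. The main obstacle is bookkeeping: carefully matching the strata on both sides, ensuring that \ref{support Eqn} is used in the right place to guarantee each face of $\sigma$ is realized by some orbit (via Lemma \ref{orbitcone Lem}) and that \ref{support Eqn} descends to each $\overline{Z}$ (via Lemma \ref{ConditionDropsToOrbits Lem}) so that Theorem \ref{coloredconevals Thm} applies to witness the required bijections orbit by orbit.
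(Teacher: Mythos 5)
Your proposal is correct and follows essentially the same route as the paper: the same face/orbit stratification of $\overline{\sigma}$ and $\bfp(X^{\beth})$, the same use of \ref{support Eqn} together with Lemma \ref{ConditionDropsToOrbits Lem} and Theorem \ref{coloredconevals Thm} to get the stratum-by-stratum bijection, and the same reliance on the $B$-semi-invariant functions in $k[X_0]$ to compare topologies before closing with compactness. The only (cosmetic) difference is that you argue via continuity of $\Phi$ plus compactness of $\bfp(X^{\beth})$, whereas the paper argues via openness of $\Phi$ plus compactness of $\overline{\sigma}$.
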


\begin{proof}
Notice that 
\[
\overline{\sigma}=\bigsqcup_{\tau} \hom(\tau^{\perp}\cap \sigma^{\vee},[0,\infty))
\]
where $\tau$ runs over the faces of $\sigma$. But then because $X$ satisfies \ref{support Eqn} we have that:
\[
\bigsqcup_{\tau} \hom(\tau^{\perp}\cap \sigma^{\vee},[0,\infty))= \bigsqcup_{Z} \hom(\sigma_Y(\overline{Z})^{\vee},[0,\infty))
\]
where $Z$ runs over the orbits of $X$, and $\overline{Z}$ is the Zariski closure of $Z$ in $X$. Because $X$ satisfies \ref{support Eqn} all elements of $\hom(\sigma_Y(\overline{Z})^{\vee},[0,\infty))$ are uniquely given by $G(k)$-invariant valuations on $k(Z)$, and by Theorem \ref{coloredconevals Thm} part 2, $\hom(\sigma_Y(\overline{Z})^{\vee},[0,\infty)$ is exactly those $G(k)$-invariant valuations with center in $\overline{Z}$. Thus it follows that $\Phi$ defines a bijection $\bfp(X^{\beth})\rightarrow \overline{\sigma}$ and $\bfp(X^{\beth})\cap \rho^{-1}(G/H)\rightarrow \sigma$.

It remains to show that $\Phi$ is a homeomorphism, and because $\overline{\sigma}$ is compact it suffices to show $\Phi$ is open. Let $X_0$ be the $B$-stable affine open from equation \ref{bestopen Eqn}, note that $X_0$ intersects all $G$-orbits of $X$. So $\bfp(X^{\beth})\subseteq X_0^{\beth}$. Recall from Theorem \ref{coloredconevals Thm} part 1 that $k[X_0]^{(B)}=\{f\in k(G/H)^{(B)}\, | \, \chi_f\in (\sigma_Y(X))^{\vee}\}$. But then evaluation at the functions $f\in k[X_0]^{(B)}$ generates the topology on $\overline{\sigma}$, so $\Phi$ is open.
\end{proof}

As a corollary we have the following.

\begin{tthmm}
There is a strong deformation retraction from $X^{\beth}$ onto $\bfp(X^{\beth})$. If the colored fan $\fF(X)$ satisfies:
\[
\bigcup_{(\sigma,\cF)\in \fF(X)}\sigma \subseteq \cV. \tag{$\star$}
\]
Then $\bfp(X^{\beth})$ is homeomorphic to the the canonical compactification of the fan $F(X)=\{\sigma \, | \, (\sigma,\cF)\in \fF(X)\}$.
\end{tthmm}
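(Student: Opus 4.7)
The first assertion is immediate from Proposition \ref{def retract prop}: it already constructs a continuous map $H\colon [0,1]\times X^{\beth}\to X^{\beth}$ with $H(0,p)=p$, $H(1,p)=\bfp(p)$, and (by idempotency, Corollary \ref{Idpotentcy Cor}) restricting to the identity on $\bfp(X^{\beth})$ at time $1$. So I focus on the homeomorphism claim under hypothesis $(\star)$, and my plan is to reduce to Proposition \ref{mainprop sec 6} and then glue.

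Write $X=\bigcup_\alpha X_\alpha$, where $X_\alpha$ is the simple $G$-subembedding corresponding to a maximal colored cone $(\sigma_\alpha,\cF_\alpha)\in\fF(X)$; these are exactly the $GX_0$ attached to closed $G$-orbits by Theorem \ref{fundopen Thm}. This is an open cover by $G$-stable opens, and $X^{\beth}=\bigcup_\alpha X_\alpha^{\beth}$ because a valuation has center in $X$ exactly when it has center in some $X_\alpha$. Each $X_\alpha$ inherits $(\star)$ since every face of $\sigma_\alpha\subseteq\cV$ lies in $\cV$, so Proposition \ref{mainprop sec 6} yields homeomorphisms $\Phi_\alpha\colon \bfp(X_\alpha^{\beth})\xrightarrow{\sim}\overline{\sigma_\alpha}$ compatible with the inclusion $\bfp(X_\alpha^{\beth})\cap \rho^{-1}(G/H)\cong \sigma_\alpha\hookrightarrow \overline{\sigma_\alpha}$.

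For the assembly, the Luna--Vust combinatorics of Section \ref{Colored fans sec} says that for any two indices $\alpha,\beta$, the simple embedding $X_\alpha\cap X_\beta$ corresponds to the common colored face $(\sigma_\alpha\cap\sigma_\beta,\cF_\alpha\cap\cF_\beta)$. Functoriality of $\bfp$ under open immersions (Proposition \ref{cd prop} part 3) identifies $\bfp(X_\alpha^{\beth})\cap\bfp(X_\beta^{\beth})$ with $\bfp((X_\alpha\cap X_\beta)^{\beth})$, and the formula defining $\Phi_\alpha$---restrict a $G(k)$-invariant valuation on the function field of an orbit to a semigroup homomorphism on the dual cone---depends only on intrinsic data of the point, not on the ambient simple piece. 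Hence $\Phi_\alpha$ and $\Phi_\beta$ agree on the overlap and both land in the common face $\overline{\sigma_\alpha\cap\sigma_\beta}=\overline{\sigma_\alpha}\cap\overline{\sigma_\beta}$ inside the canonical compactification. Gluing yields a continuous bijection $\Phi\colon \bfp(X^{\beth})\to\bigcup_\alpha\overline{\sigma_\alpha}$, which is by definition the canonical compactification of $F(X)$; since each $\Phi_\alpha$ is a homeomorphism onto its piece and these pieces cover source and target openly, $\Phi$ is a local homeomorphism, hence a homeomorphism.

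The main obstacle, I expect, is the bookkeeping in the gluing step: one must verify carefully that $\bfp(X_\alpha^{\beth})=\bfp(X^{\beth})\cap X_\alpha^{\beth}$ (and therefore that $\bfp(X_\alpha^{\beth})$ is open in $\bfp(X^{\beth})$), so that the local homeomorphisms $\Phi_\alpha$ genuinely cover $\bfp(X^{\beth})$ by open subsets. This should follow from the explicit description of $X_\alpha$ as the complement in $X$ of divisors outside $\cD_{Y_\alpha}(X)$ (equation \ref{bestopen Eqn}), together with the identification in Proposition \ref{imageofgstar Prop} of $\bfp(p)$ via the $G$-orbit closure of $\rho(p)$, but the argument must be set up so that condition $(\star)$ is used exactly where Proposition \ref{mainprop sec 6} needs it.
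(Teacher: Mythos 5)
Your overall approach matches the paper's: the first assertion is indeed exactly Proposition \ref{def retract prop}, and the second assertion is intended to follow from Proposition \ref{mainprop sec 6} by gluing over the simple subembeddings. The paper's own proof is terse to the point of not spelling out the gluing at all, so you are actually being more careful than the paper here. The decomposition $X=\bigcup_\alpha X_\alpha$ into simple pieces, the observation that each $X_\alpha$ inherits $(\star)$, the use of Proposition \ref{mainprop sec 6} on each, and the point that $\Phi$ is defined intrinsically (so agreement on overlaps is automatic) are all correct.

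However, the last step of the gluing has a genuine error: you claim ``these pieces cover source and target openly,'' but they do not. The subsets $X_\alpha^{\beth}\subseteq X^{\beth}$ are \emph{not} open --- $X_\alpha^{\beth}$ imposes the condition that the center lie in $X_\alpha$, which is a closed condition because the reduction map is anticontinuous; correspondingly $\overline{\sigma_\alpha}$ is a closed (compact) subset of the canonical compactification, not an open one. So the ``local homeomorphism on an open cover'' argument cannot run, and the worry you raise at the end (trying to show $\bfp(X_\alpha^{\beth})$ is open in $\bfp(X^{\beth})$) is aimed at something false. The fix is straightforward and does not change the structure of the argument: the pieces $\bfp(X_\alpha^{\beth})$ are compact and finite in number, the $\Phi_\alpha$ agree on overlaps, so $\Phi$ is continuous by the pasting lemma for finite closed covers; $\Phi$ is a bijection; $\bfp(X^{\beth})$ is compact (continuous image of the compact space $X^{\beth}$) and the canonical compactification of $F(X)$ is Hausdorff, so the continuous bijection $\Phi$ is automatically a homeomorphism. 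One secondary remark: $X_\alpha\cap X_\beta$ need not be a simple embedding (its colored fan is the set of colored faces common to $(\sigma_\alpha,\cF_\alpha)$ and $(\sigma_\beta,\cF_\beta)$, which may contain more than one maximal colored cone), but since the agreement of $\Phi_\alpha$ and $\Phi_\beta$ is checked pointwise via the intrinsic formula for $\phi_p$, this does not affect the argument.
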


\begin{proof}
That $X^{\beth}$ strong deformation retracts onto $\bfp(X^{\beth})$ space follows from Corollary \ref{def retract prop}. By Proposition \ref{mainprop sec 6} if $X$ satisfies \ref{support Eqn} then $\bfp(X^{\beth})$ is homeomorphic to the canonical compactification of the fan $F(X)=\{\sigma \, | \, (\sigma,\cF)\in \fF(X)\}$.
\end{proof}

\begin{rem}\label{description when star not satisfied rem}
Because the map $\bfp$ maps into the closure of $\cV$ in $X^{\beth}$ by construction, one will not have the above for a general spherical variety. This is because a colored cone need not be contained in $\cV$. However given a simple embedding $X$, one will have that $\bfp(X^{\beth})=\overline{\sigma\cap \cV }$, where the closure is taken in $\hom(\sigma^{\vee},[0,\infty])$. Consider for example the $\text{GL}_2$-embedding $\bfp(X^{\beth})$ as in Figure \ref{GL_2 fig}, one can see $\bfp(X^{\beth})$ depicted in Figure \ref{GL_2 compactification fig}.
\end{rem}

\begin{figure}[!ht]
\centering
\begin{tikzpicture}
      \draw  (0,0) -- (-2.5,2.5);
      \draw (0,0) -- (2.5,0);
      \draw (2.5,0) -- (2.5,2.5);
      \draw (-2.5,2.5) -- (2.5,2.5);
      \node (A) at ( 0, 0) {};
        \node (B) at (2.5, 2.5) {};
        \node (C) at ( 2.5, 0) {};
        \fill [pink] (A.center) -- (B.center) -- (C.center) -- cycle;
      \draw [DeepPink] (A.center) -- (B.center) -- (C.center) -- cycle;
            \node (D) at ( 0,0) {};
        \node (E) at (2.5, 0) {};
        \node (F) at (2.5,2.5) {};
        \node (G) at (-2.5,2.5) {};
        \fill [lightgray, fill opacity=0.5] (D.center) -- (E.center) -- (F.center) -- (G.center) -- cycle;
\end{tikzpicture}
\caption{The canonical compactification of the cone $\sigma$ associated to $X$, along with $\bfp(X^{\beth})$ depicted as the pink shaded region to the right.} \label{GL_2 compactification fig}
\end{figure}
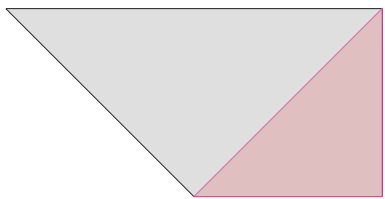

\begin{rem}
Because of this difference between spherical varieties that satisfy \ref{support Eqn} and those that do not, it may be interesting to describe $\ast$-multiplication by $\bfb$, the unique point in the Shilov boundary of $B^{\beth}$. All of the results described in Section \ref{starmult sec} regarding $\bfg$ apply to $\bfb$, including the construction of the homotopy from $X^{\beth}$ to the image of $\ast$-multiplication by $\bfb$.
\end{rem}


\nocite{*}

\renewcommand{\section}[2]{\subsection#1{#2}\vspace{-5pt}} 
\printbibliography

\end{document}